\documentclass[11pt,a4paper,reqno]{amsart}
\usepackage{titlesec}
\usepackage{mathrsfs}
\usepackage{amsmath,amssymb}
\usepackage{amsfonts}
\usepackage{latexsym,bm}
\usepackage{amsthm}
\usepackage{cite}
\usepackage{amssymb}
\usepackage{amssymb,amscd}
\usepackage{amsbsy}
\usepackage{fancyhdr,graphicx}
\usepackage{indentfirst}
\usepackage{graphics,color}
\usepackage{epsfig}
\usepackage{xcolor}
\usepackage{overpic}
\usepackage{caption}
\usepackage{subcaption}
\usepackage{multirow}
\usepackage{diagbox}
\usepackage{array}
\usepackage{makecell}
\usepackage{arydshln}
\usepackage{booktabs}
\usepackage{threeparttable}

\usepackage{enumerate}
\titleformat{\section}{\normalfont\Large\bfseries}{\thesection}{1em}{}
\titleformat{\subsection}{\normalfont\large\bfseries}{\thesubsection}{0.5em}{}
\titleformat{\subsubsection}{\normalfont\normalsize\bfseries}{\thesubsubsection}{0.5em}{}

\newtheorem {theorem} {Theorem}[section]
\newtheorem {proposition} {Proposition}[section]

\newtheorem {lemma}  {Lemma}[section]

\newtheorem {definition} {Definition}[section]
\newtheorem {remark} {Remark}[section]
\newtheorem {problem} {Problem}
\theoremstyle{remark}
\theoremstyle{definition}

\subjclass[2020]{Primary: 34C25; Secondary: 34A34, 37C27, 37G15}
\keywords{Generalized Abel equations; limit cycles; Chebyshev systems; Melnikov functions; Hilbert numbers}
\newpage
\makeatletter
\def\@settitle{\begin{center}\normalfont\LARGE\bfseries\@title\end{center}\par\vskip 0.5em}
\makeatother
\begin{document}

\title[Limit cycles of generalized Abel equations]
{On the Number of Limit Cycles in Generalized Abel Equations with Coefficients Having the Chebyshev Property}

\author[J. Huang]{Jianfeng Huang}
\address{Department of Mathematics, Jinan University, Guangzhou 510632, P. R. China}
\email{thuangjf@jnu.edu.cn}

\author[R. Tian]{Renhao Tian}
\address{School of Mathematics (Zhuhai), Sun Yat-sen University, Zhuhai 519082, P. R. China}
\email{tianrh5@mail.sysu.edu.cn}

\author[Y. Zhao]{Yulin Zhao}
\address{School of Mathematics (Zhuhai), Sun Yat-sen University, Zhuhai 519082, P. R. China}
\email{mcszyl@mail.sysu.edu.cn}

\begin{abstract}
This paper concerns the maximum number of limit cycles of generalized Abel differential equations $dx/dt = A(t)x^p + B(t)x^q$, where $A$ and $B$ belong to the linear span of a family of functions having the Chebyshev property.
Motivated by a recent open problem posed by Huang et al. (Nonlinearity, 2026), we investigate whether this maximum number can be bounded in terms of $p$, $q$, and the structure of the family.
Under some natural hypotheses and by means of first- and second-order analyses using Melnikov functions, we provide lower bounds for this maximum number.
In contrast to previous work, no specific form for the coefficients is assumed.
We then apply these estimates to Abel equations with trigonometric polynomial, polynomial, and hyperbolic coefficients. 
In the trigonometric polynomial case, we reestablish the results of \'{A}lvarez et al. (J. Math. Anal. Appl., 2008) and Huang et al. (SIAM J. Appl. Dyn. Syst., 2020), while
in the polynomial case, we improve the classical lower bound given by Lins-Neto.
\end{abstract}
\date{}
\maketitle
\section{Introduction}
In the qualitative theory of differential equations, determining the number and distribution of isolated periodic orbits, known as \textit{limit cycles}, is widely considered a cornerstone problem.
This subject goes back to the pioneering work of Poincar\'{e} in the late 19th century, but it became a central focus of modern research after Hilbert proposed his 23 open problems at the International Congress of Mathematicians in 1900.
In particular, the second part of Hilbert's 16th problem asks for the maximum number and the relative positions of limit cycles for planar polynomial differential equations of degree $n$.
Despite decades of intensive research, it remains unsolved even for the simplest non-trivial case $n=2$.
For a comprehensive overview, we refer the reader to the surveys \cite{Ily,Rou,Li}.
Due to the complexity of the original problem, mathematicians have subsequently introduced various weakened versions.
For example, Arnold \cite{Arnold1,Arnold2} proposed studying the maximum number of limit cycles bifurcating from a periodic annulus of a polynomial Hamiltonian system under small perturbations.
In a similar spirit, Smale \cite{Smale0} suggested restricting the investigation of Hilbert's 16th problem to classical polynomial Li\'{e}nard systems.

Another problem closely related to Hilbert's 16th problem, originally proposed by Pugh (see e.g. \cite{Lins,Smale0,Smale1}),
is to investigate the maximum number of \textit{isolated closed solutions} of the following generalized Abel equations:
\begin{equation}\label{eq0}
    \frac{dx}{dt}=\sum_{i=0}^{k}A_{i}(t)x^{i},\quad A_{i}\in C^{0}[-T,T],\quad t\in[-T,T],
\end{equation}
where the closed solutions satisfy the boundary condition $x(-T)=x(T)$.
By convention, an isolated closed solution is called a limit cycle.
The study of equation \eqref{eq0} is of great importance, as numerous families of planar polynomial systems can be brought into this form via appropriate transformations.
Notable examples include {Li\'{e}nard systems \cite{Cherkas,Harko2014}}, quadratic systems \cite{GL,Lins}, rigid systems \cite{openproblem}, and systems with homogeneous nonlinear terms, among others \cite{ABS,GL,HL}.
Several real-world problems can also be reduced to equation \eqref{eq0}.
For instance, an Abel-type equation is used to approximate unstable limit cycles in power converters \cite{appliction1} and to model the relativistic evolution of cosmological fluids \cite{appliction2}.
Furthermore, Abel equations also arise in the analysis of travelling waves in models of glioblastoma growth \cite{appliction3} and in formulations of inflation conditions in Einstein--Friedmann cosmology \cite{appliction4}.

Returning to the study of limit cycles for equation \eqref{eq0},
the maximum number of limit cycles strongly depends on the degree $k$.
When $k = 1$, \eqref{eq0} is a linear equation with an affine return map, which implies that it has at most one limit cycle.
For $k = 2$, \eqref{eq0} reduces to a Riccati equation.
Since its return map is a Möbius transformation, it has at most two limit cycles (see \cite{Lins, Lloyd, Lloyd2}).
However, the situation changes drastically for the classical Abel equation ($k = 3$).
Lins Neto \cite{Lins} proved that without additional restrictions on the coefficients,
the number of limit cycles of the equation is no longer uniformly bounded.
More precisely, by employing bifurcation techniques, he showed that Abel equations of the form
\begin{equation}\label{eq1}
    \frac{dx}{dt}=A(t)x^{3}+B(t)x^{2},\quad A,B\in C^{0}[-T,T],\quad t\in[-T,T],
\end{equation}
can exhibit at least $m$ limit cycles when the coefficients $A(t)$ and $B(t)$ are polynomials or trigonometric polynomials of degree $m$.
Related lower-bound constructions were later extended to generalized Abel equations with $k > 3$ (see, e.g., \cite{GG, AGY, HTV}).

Let $H_{\mathbb{T}}(n,m)$ denote the maximum number of limit cycles of equation \eqref{eq1} when $A$ and $B$ are trigonometric polynomials of degrees $n$ and $m$, respectively.
The problem of determining the exact value of $H_{\mathbb{T}}(n,m)$ is widely known as the Smale-Pugh problem.
A common approach to provide lower bounds for $H_{\mathbb{T}}(n,m)$ relies on small perturbations and Melnikov functions.
Specifically, using a first-order Melnikov analysis, \'{A}lvarez et al. \cite{AGY} proved that $H_{\mathbb{T}}(n,1)\geq n+2$ and $H_{\mathbb{T}}(1,m)\geq 2m+1$.
Later, Huang et al. \cite{HTV} developed a second-order analysis to show that $H_{\mathbb{T}}(n,m)\geq 2(n+m)-1$.
For the low-degree case $n = m = 1$, where both coefficients are linear trigonometric polynomials, equation \eqref{eq1} can be written as
\begin{equation}\label{eq2}
    \frac{dx}{dt}=(a_{1}+a_{2}\sin t+a_{3}\cos t)x^{3}+(b_{1}+b_{2}\sin t+b_{3}\cos t)x^{2},
\end{equation}
where $a_{i},b_{i}\in\mathbb{R}$ for $i=1,2,3$.
For this equation, Yu et al. \cite{YHL} proved that $H_{\mathbb{T}}(1,1)=3$.
Recently, this result was extended in \cite{HTZ}.
The authors proved that if the coefficient functions $A$ and $B$ of equation \eqref{eq1} belong to \(\operatorname{span}(\mathcal F)\), where $\mathcal{F}=\{f_0,f_1,f_2\}$ is an Extended Chebyshev system (ET-system) on $[0,T)$,
with each $f_i$ being continuous on $[0,T]$ and $f_0|_{[0,T]} > 0$, then equation \eqref{eq1} admits at most three limit cycles.
It is worth mentioning that this criterion not only recovers the aforementioned result of \cite{YHL}, but also solves the problem when $A$ and $B$ are quadratic polynomials, among other cases.
For further details, we refer the reader to \cite{HTZ, bravo1, bravo2, Alvarez2007}.

An insightful perspective introduced in \cite{HTZ} is to unify the treatment of standard polynomials and trigonometric polynomials by embedding them into the Chebyshev framework (see Section 2 for specific definitions).
Following this line of research, the authors formulated the following Chebyshev version of the Smale-Pugh problem:
\begin{problem}\label{problem}(\cite{HTZ})
Let $\mathcal{F}=\{f_0,f_1,\cdots,f_n\}$ be a Chebyshev system on $[0,T)$, with each $f_i$ continuous on $[0,T]$ and $f_0|_{[0,T]} > 0$.
Is the maximum number of limit cycles for equation \eqref{eq1} with coefficients $A,B\in\operatorname{span}(\mathcal{F})$ bounded in terms of $n$?
Here $\operatorname{span}(\mathcal{F}):=\{\sum_{i=0}^{n}\lambda_{i} f_i \mid \lambda_i \in \mathbb{R}\}$.
\end{problem}
For the case $n\leq 2$, the problem has already been studied in \cite{Alvarez2007,HTZ}.
It is therefore natural to ask whether the Chebyshev framework can yield estimates when the coefficient space has dimension greater than three, equivalently when \(n>2\).
{Our main objective is to establish lower bounds for the maximum number of limit cycles in Problem \ref{problem} for arbitrary $n$, by developing a unified framework based on first- and second-order Melnikov analyses combined with the Chebyshev property.}

Let us briefly outline the structure of this paper.
We start by introducing the setting and the main results in Section 2.
Next, we present several preliminary results in Section 3, including the expressions of the first- and second-order Melnikov functions and some properties of Chebyshev systems.
In Section 4, we provide the complete proofs of our results.
{We then study Abel equations with several important families of coefficient functions, including trigonometric polynomials, polynomials, and hyperbolic functions, to illustrate the applications of our criterion and discuss their scope in Section 5.}
Finally, we conclude the paper with a discussion on future research directions in Section 6.

\section{Setting and Main Results}

\subsection{Problem Setup}\label{setting}
Let \(\mathcal F=\{f_i\}_{i=0}^{\infty}\) be a sequence of real-analytic functions defined on an open neighborhood of \([-T,T]\), where $f_{0}(t)> 0$ for all $t \in [-T,T]$.
We denote the $(n+1)$-th truncation of $\mathcal{F}$ by $\mathcal{F}_{n}=\{f_{0},f_{1},\cdots,f_{n}\}$.
In this paper, we study the generalized Abel differential equation of the form
\begin{equation}\label{Abel equation}
    \frac{dx}{dt}=A(t)x^{p}+B(t)x^{q},
\end{equation}
where $p, q \in \mathbb{Z}_{\geq 2}$ with $p \neq q$, and the coefficients $A \in \operatorname{span}(\mathcal{F}_{n})$ and $B \in \operatorname{span}(\mathcal{F}_{m})$.
In particular, when $p=3$ and $q=2$, equation \eqref{Abel equation} reduces to the classical Abel equation \eqref{eq1}.
For fixed $p$ and $q$, let $\mathcal{H}_{p,q}(n,m)$ denote the Hilbert number of equation \eqref{Abel equation}, defined as the supremum of the number of limit cycles that equation \eqref{Abel equation} can have for any $A\in\operatorname{span}(\mathcal{F}_{n})$ and $B\in\operatorname{span}(\mathcal{F}_{m})$, where this supremum may be \(+\infty\).

When studying $\mathcal{H}_{p,q}(n,m)$, without loss of generality, we may assume that
\begin{equation}\label{eq:integral_zero}
    \int_{-T}^{T}f_{k}(t)\,dt=0\quad \text{for}\quad k\geq 1.
\end{equation}
Indeed, since $f_{0}$ does not change sign on $[-T,T]$, there exists a unique constant $c_{k}$ for each $k\geq 1$ such that
\begin{equation*}
    \int_{-T}^{T} \big( f_{k}(t)-c_{k}f_{0}(t) \big) \,dt=0.
\end{equation*}
Since $\operatorname{span}(\mathcal{F}_{n})=\operatorname{span}\{ f_{0}, f_{1}-c_{1}f_{0}, \cdots, f_{n}-c_{n}f_{0} \}$, one can always replace each $f_{i}$ with the new basis vectors $f_{i}-c_{i}f_{0}$ for $i \geq 1$ to ensure \eqref{eq:integral_zero}.

For each \(i\geq0\), decompose \(f_i\) into its even and odd parts.
That is,
\begin{equation*} 
    f_{i}^{e}(t)=\frac{f_{i}(t)+f_{i}(-t)}{2} \quad \text{and} \quad f_{i}^{o}(t)=\frac{f_{i}(t)-f_{i}(-t)}{2}, 
\end{equation*} respectively. 
We also introduce their corresponding primitives
\begin{equation*}
F_{i}^{e}(t)=\int_{-T}^{t}f_{i}^{e}(s)\,ds \quad \text{and} \quad F_{i}^{o}(t)=\int_{-T}^{t}f_{i}^{o}(s)\,ds.
\end{equation*}
It is easy to check that for every $i\geq 1$, $F_{i}^{o}$ is an even function, whereas $F_{i}^{e}$ is an odd function.

To refine our analysis, we introduce the index sets
\begin{equation*}
    \begin{split}
        &\mathcal{N}_{k}=\{0,1,\cdots,k\},\quad \mathcal{N}^{\text{odd}}_{k}=\{i\in\mathcal{N}_{k} \mid f_{i}^{e}\equiv 0 \text{ on } [-T,T]\},\\
        &\mathcal{N}^{\text{non-odd}}_{k}=\{i\in\mathcal{N}_{k} \mid f_{i}^{e}\not\equiv 0 \text{ on } [-T,T]\},
    \end{split}
\end{equation*}
and let $k_{1}=\#\mathcal{N}^{\text{non-odd}}_{k}-1$, where $\# A$ denotes the cardinality of set $A$.
From the above definition, one immediately obtains the following facts:
\begin{itemize}
    \item For each $k\in\mathbb{N}$, the integer $k_{1} \in \mathbb{N}_{\leq k}$ is uniquely determined and nondecreasing with respect to $k$.
    \item For any $i\in\mathcal{N}^{\text{non-odd}}_{k}$, $f_{i}^{e}\not\equiv 0$, meaning that $f_{i}$ is not an odd function.
    \item $0\in\mathcal{N}^{\text{non-odd}}_{k}$, since $f_{0}$ does not change sign on $[-T,T]$.
\end{itemize}

For convenience and to maintain consecutive indexing, the nonzero even and odd parts are separately relabeled according to the increasing order of their original indices.
That is,
\begin{equation*}
    \begin{split}
    &\{f^{e}_{i}\}_{i\in\mathcal{N}^{\text{non-odd}}_{k}}:=\{\tilde{f}^{e}_{0},\tilde{f}^{e}_{1},\cdots,\tilde{f}^{e}_{k_{1}}\}, \quad
    \{f^{o}_{i}\}_{i\in\mathcal{N}^{\text{odd}}_{k}}:=\{\tilde{f}^{o}_{1},\cdots,\tilde{f}^{o}_{k-k_{1}}\},\\
    &\{F^{e}_{i}\}_{i\in\mathcal{N}^{\text{non-odd}}_{k}}:=\{\tilde{F}^{e}_{0},\tilde{F}^{e}_{1},\cdots,\tilde{F}^{e}_{k_{1}}\}.
    \end{split}
\end{equation*}

To obtain lower bounds for \(\mathcal H_{p,q}(n,m)\), consider the second-order perturbation of the generalized Abel differential equation \eqref{Abel equation} given by
\begin{equation}\label{perturbation equation}
    \frac{dx}{dt}=\left(f_{1}(t)+\varepsilon P_{1}(t)+\varepsilon^{2} P_{2}(t)\right)x^{p}+\left(\varepsilon Q_{1}(t)+\varepsilon^{2} Q_{2}(t)\right)x^{q},
\end{equation}
where $p,q\in\mathbb{Z}_{\geq 2}$ with $p\not=q$, and the perturbation terms belong to the truncated spaces $\operatorname{span}(\mathcal{F}_{n})$ and $\operatorname{span}(\mathcal{F}_{m})$, respectively, namely
\begin{equation*}
    P_{i}(t)=\sum_{j=0}^{n}a_{ij}f_{j}(t)\in \operatorname{span}(\mathcal{F}_{n}) \quad \text{and} \quad Q_{i}(t)=\sum_{j=0}^{m}b_{ij}f_{j}(t)\in \operatorname{span}(\mathcal{F}_{m})
\end{equation*}
for $i=1,2$, and $n,m\geq 1$.

The parameter space associated with equation \eqref{perturbation equation} is $\mathbb{R}^{2(n+m+2)}$. For convenience, we collect all perturbation coefficients into a single vector $\mu=(\mathbf{a}_{1},\mathbf{a}_{2},\mathbf{b}_{1},\mathbf{b}_{2})$, where
\begin{equation*}
    \mathbf{a}_{i}=(a_{ij} \mid j=0,1,\cdots,n) \quad \text{and} \quad \mathbf{b}_{i}=(b_{ij} \mid j=0,1,\cdots,m) \quad \text{for } i=1,2.
\end{equation*}
Let $x_{\varepsilon}(t,\rho;\mu)$ denote the solution of the equation satisfying the initial condition $x_{\varepsilon}(-T,\rho;\mu)=\rho$.
For $\varepsilon=0$, we have the unperturbed system
\begin{equation*}
    \frac{dx}{dt}=f_{1}(t)x^{p}.
\end{equation*}
A straightforward integration yields its general solution
\begin{equation}\label{x0(t)}
    x_{0}(t,\rho;\mu)=\frac{\rho}{\big(1+(1-p)\rho^{p-1}F_{1}(t)\big)^{\frac{1}{p-1}}},
\end{equation}
where $F_{1}(t)=\int_{-T}^{t}f_{1}(s)\,ds$. Then, there exists a maximal interval of initial values, $J$, for which $x_0(t,\rho;\mu)$ is well-defined on $[-T,T]$. More precisely,
\begin{equation}\label{J}
    J = \{ \rho\in\mathbb{R} \mid 1+(1-p)\rho^{p-1}F_{1}(t) > 0 \text{ for all } t\in[-T,T] \}.
\end{equation}
It is clear that for any $\rho\in J$, the unperturbed solution satisfies $x_{0}(T,\rho)=\rho$. 
By virtue of the analytic dependence of solutions on both initial conditions and parameters, the displacement function associated with the return map can be expanded in powers of $\varepsilon$ as
\begin{equation*}
    x_{\varepsilon}(T,\rho;\mu)-\rho = \sum_{i=1}^{\infty} \varepsilon^i M_{i}(\rho;\mu),
\end{equation*}
where each coefficient $M_{i}(\rho;\mu)$ is an analytic function on $J\times\mathbb{R}^{2(n+m+2)}$. As usual, $M_{i}$ is called the \textit{$i$-th order Melnikov function} associated with the perturbed equation \eqref{perturbation equation}.

Let \(M_k\) be the first non-identically vanishing Melnikov function. Since
the displacement function is analytic in \((\rho,\varepsilon)\), the normalized
displacement function
\[
\widehat D(\rho,\varepsilon)
:=
\frac{x_{\varepsilon}(T,\rho;\mu)-\rho}{\varepsilon^k},
\qquad \varepsilon\neq0,
\]
extends analytically to \(\varepsilon=0\) by setting
\[
\widehat D(\rho,0):=M_k(\rho;\mu).
\]
If \(\rho_0\in J\) is an isolated zero of \(M_k(\cdot;\mu)\) of finite
multiplicity \(s\), then the Weierstrass Preparation Theorem yields a
neighborhood \(U\subset J\) of \(\rho_0\) such that, for every sufficiently
small \(\varepsilon\neq0\), the number of zeros of the displacement function
in \(U\), counted with multiplicities, is at most \(s\). 
In particular, if \(\rho_0\) is a simple zero of \(M_k(\cdot;\mu)\), then the Implicit Function Theorem gives a unique branch \(\rho=\rho(\varepsilon)\), with \(\rho(0)=\rho_0\), of fixed points of the Poincar\'e map for every sufficiently small \(\varepsilon\neq0\). Moreover,
\[
\partial_{\rho}\!\left(x_{\varepsilon}(T,\rho;\mu)-\rho\right)
\big|_{\rho=\rho(\varepsilon)}\neq0,
\]
so the corresponding limit cycle is hyperbolic.

Consequently, if \(M_k(\cdot;\mu)\) has finitely many distinct simple zeros contained in a compact subinterval $I\subset J$, the preceding argument can be applied simultaneously in pairwise disjoint neighborhoods of these zeros.
This local persistence result is the mechanism used below to establish the
lower bounds for \(\mathcal H_{p,q}(n,m)\).
\subsection{Main results}

As noted in the Introduction, the coefficients of the perturbed equation \eqref{perturbation equation} are not restricted to polynomial or trigonometric forms, but are instead general functions satisfying specific Chebyshev properties.
To establish our framework, we begin by recalling the definition of a \textit{complete Chebyshev system} (CT-system).
For more details on Chebyshev systems, see Section \ref{3.2}.
\begin{definition}
    Let $\{f_{0},f_{1},\cdots,f_{n}\}$ be a set of analytic functions on an open interval $I\subset \mathbb{R}$.
    This set is a complete Chebyshev system (CT-system) on $I$ if for every $k\in\{0, 1, \cdots, n\}$, any nontrivial linear combination
    $$\alpha_{0} f_{0}(x) + \alpha_{1} f_{1}(x) + \cdots+ \alpha_{k}f_{k}(x)$$ has at most $k$ isolated zeros on $I$.
\end{definition}

For \(r\geq1\), define
\[
\mathcal B_r
=
\operatorname{span}
\big(\{
\tilde F_1^e\tilde f_1^o,\ldots,
\tilde F_r^e\tilde f_1^o
\}\big),
\
\mathcal B_0=\{0\}.
\]
For the fixed truncation indices \(n,m\geq 1\), let
\[
m_1=\#\mathcal N_m^{\mathrm{non\text{-}odd}}-1,
\
n_1=\#\mathcal N_n^{\mathrm{non\text{-}odd}}-1,
\]
and assume \(m_1,n_1\geq1\).
To facilitate the analysis, we impose the following hypotheses on the functions involved:
\begin{itemize}
    \item[(H$_{1}$)] $f_{1}$ is an odd function, i.e. $f_{1}=\tilde{f}_{1}^{o}$, and $f_{1}\geq 0$ with $f_{1}\not\equiv 0$ on $[0,T]$.
    \item[(H$_{2}$)] $\{f^{e}_{i}\}_{i\in\mathcal{N}^{\text{non-odd}}_{r}}$  is a CT-system on $(0,T)$ for all $r\in \mathbb{N}_{\geq 1}$.
    \item[{(H$_{3}$)}] {For all $k,l\in\mathbb{Z}_{\geq 1}$, $\tilde{F}^{e}_{l}\tilde{f}^{o}_{k}\in\mathcal{B}_{k+l-1}\setminus\mathcal{B}_{k+l-2}$, and $m_{1}\geq \min\{m-m_{1}-1,n_{1}-1\}$.}
\end{itemize}
Our first main result concerns the first-order Melnikov function $M_{1}(\cdot\,;\mu)$ of equation \eqref{perturbation equation} under hypotheses (H$_{1}$) and (H$_{2}$).
{The parity of $p$ and $q$ determines the symmetry properties of the Melnikov functions and leads to different estimates for the number of their zeros.}
\begin{theorem}\label{thm1}
    Assume that $M_{1}(\cdot\,;\mu)\not\equiv0$ is the first-order Melnikov function of equation \eqref{perturbation equation} and hypotheses (H$_{1}$) and (H$_{2}$) hold.
    Let $Z(M_{1})^{+}$ (resp. $Z(M_{1})^{-}$) be the number of zeros of $M_{1}(\cdot\,;\mu)$ on $J\cap\mathbb{R}^{+}$ (resp. $J\cap\mathbb{R}^{-}$) taking multiplicities into account.
    Then the following statements hold:
    \begin{itemize}
        \item[(i)] If $p$ and $q$ are both odd, then $Z(M_{1})^{\pm}\leq m_{1}+1$.
        Moreover, there exists $\mu_{0}\in\mathbb{R}^{2(n+m+2)}$ such that $M_{1}(\cdot\,;\mu_{0})$ has $2(m_{1}+1)$ simple zeros in $J\setminus\{0\}$;
        \item[(ii)] If $p$ is odd and $q$ is even, then $Z(M_{1})^{\pm}\leq m_{1}+1$ and the two equalities cannot hold simultaneously. Moreover,
\begin{itemize}
            \item[(ii.1)] when $p<q$, there exists $\mu_{0}\in\mathbb{R}^{2(n+m+2)}$ such that $M_{1}(\cdot\,;\mu_{0})$ has $2m_{1}+1$ simple zeros in $J\setminus\{0\}$.
            \item[(ii.2)] when $p>q$, there exists $\mu_{0}\in\mathbb{R}^{2(n+m+2)}$ such that $M_{1}(\cdot\,;\mu_{0})$ has $2m_{1}$ simple zeros in $J\setminus\{0\}$.
    \end{itemize}
        \item[(iii)] If $p$ is even, $q$ is odd, and $p<q$, then $Z(M_{1})^{+}+Z(M_{1})^{-}\leq m_{1}+1$.
            Moreover, there exists $\mu_{0}\in\mathbb{R}^{2(n+m+2)}$ such that $M_{1}(\cdot\,;\mu_{0})$ has $m_{1}+1$ simple zeros in $J\setminus\{0\}$.
        \item[(iv)] If $p$ is even and either $q$ is even or $p>q$, then $Z(M_{1})^{+}+Z(M_{1})^{-}\leq m_{1}+2$.
            Moreover, there exists $\mu_{0}\in\mathbb{R}^{2(n+m+2)}$ such that $M_{1}(\cdot\,;\mu_{0})$ has $m_{1}+2$ simple zeros in $J\setminus\{0\}$.
    \end{itemize}
\end{theorem}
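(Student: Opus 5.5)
The plan is to obtain a closed form for $M_1$, reduce it to a one-variable integral against a positive kernel, bound its zeros using (H$_2$) together with a variation-diminishing argument, and then use parity to pass from the reduced variable back to $\rho$.

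\emph{Formula for $M_1$.} I would substitute $u=x^{1-p}$ in \eqref{perturbation equation}, which gives
\[
u'=(1-p)\bigl(f_1+\varepsilon P_1+\varepsilon Q_1x^{q-p}\bigr)+O(\varepsilon^2).
\]
By (H$_1$) and \eqref{eq:integral_zero} we have $F_1(T)=0$. Comparing $u(T)$ with $u(-T)$ to first order in $\varepsilon$ then yields
\[
M_1(\rho;\mu)=\rho^{p}\int_{-T}^{T}\Bigl(P_1(t)+Q_1(t)\,x_0(t,\rho)^{q-p}\Bigr)dt .
\]
Here $x_0^{q-p}=\rho^{q-p}\bigl(1+(1-p)\rho^{p-1}F_1(t)\bigr)^{\frac{p-q}{p-1}}$, using \eqref{x0(t)}. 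Since $f_1$ is odd, $F_1$ is even, so the kernel is even in $t$. Hence only $a_{10}\int f_0=:c$ survives from $P_1$, and only the even part $Q_1^e=\sum_{j\in\mathcal N_m^{\rm non\text{-}odd}}b_{1j}f_j^e$ survives from $Q_1$. Writing $\lambda=\rho^{p-1}$, this gives
\[
M_1=\rho^{p}\bigl(c+\rho^{q-p}\,I(\lambda)\bigr),\qquad I(\lambda)=2\int_0^T Q_1^e(t)\,\bigl(1+(1-p)\lambda F_1(t)\bigr)^{\frac{p-q}{p-1}}dt .
\]
By (H$_1$), $F_1$ is monotone on $[0,T]$, so $s=F_1(t)$ is a legitimate change of variables. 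The kernel $(1+(1-p)\lambda s)^{\gamma}$ is then, for $\lambda$ in the relevant range, an extended totally positive kernel in $(\lambda,s)$, in the sense of a Stieltjes/Laplace-type kernel.

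\emph{Upper bounds.} By (H$_2$), $Q_1^e$ lies in an $(m_1+1)$-dimensional CT-system and so has at most $m_1$ sign changes on $(0,T)$. The variation-diminishing property of the kernel then gives that $I$ has at most $m_1$ zeros in $\lambda$ on each half of $J$. To absorb the constant $c$, I would write $M_1/\rho^{q}=c\rho^{p-q}+I$, divide by $\rho^{p-q}$ and differentiate. The derivative is again an integral of the same type, with a kernel of the same family and exponent shifted, so Rolle's theorem gives at most $m_1+1$ zeros. I regard this Chebyshev/total-positivity step as the main obstacle. It requires checking that the differentiated kernel stays totally positive, and that (H$_2$) holds on $(0,T)$ in the form needed after the change of variables. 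If the direct argument is awkward, I would instead show that $\{\partial_\lambda^k\text{-moments}\}$ form an ECT-system via a Wronskian/Gram-type determinant.

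\emph{Parity and realization.}
\begin{itemize}
\item If $p$ is odd, then $\lambda=\rho^{p-1}$ is even in $\rho$, and $\rho^{q-p}$ is even when $q$ is odd and odd when $q$ is even. This gives $Z(M_1)^\pm\le m_1+1$, with identical zero sets on the two sides in case (i).
\item In case (ii), the sign flip of $\rho^{q-p}$ means that positive and negative zeros correspond to zeros of $c\pm(\cdots)$. These cannot both attain the maximum: counting zeros of the sum and difference together yields one fewer.
\item If $p$ is even, $\lambda=\rho^{p-1}$ is a bijection of $\mathbb R$ and both sides are counted together. Whether the constant term produces an extra zero depends on the sign of $p-q$ through the behaviour of $I$ at the endpoints of $J$ and at $0$, which splits the count into cases (iii) and (iv).
\end{itemize}
For the existence parts, I would use the fact that the map from $(c,b_{1j})$ to the first moments $\int Q_1^eF_1^k$, i.e. the Taylor coefficients of $I$ at $\lambda=0$, is surjective by (H$_2$). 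Choosing these coefficients so that the truncated expansion has the prescribed number of simple zeros near $0$, and then scaling, gives simple zeros of $M_1$. Nonvanishing Jacobians ensure these are simple, and the parity above doubles or halves the count as stated.
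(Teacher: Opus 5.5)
Your reduction $M_1=\rho^{p}\bigl(c+G(\rho)\bigr)$ with $G(\rho):=\rho^{q-p}I(\rho^{p-1})$ is correct. So is the half-line bound $Z(M_1)^{\pm}\le m_1+1$, obtained by differentiating and applying Rolle: the differentiated kernel has exponent $\frac{1-q}{p-1}<0$, so it is of Stieltjes type. This is essentially the paper's route, with sign-regularity in place of the Gasull--Geyer--Ma\~nosas criterion (Lemma~\ref{chebyshev3}).

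\textbf{Gap in (ii).} The parity refinements carry the actual content of (ii)--(iv), and you only assert them. In (ii), ``counting zeros of the sum and difference together yields one fewer'' is not an argument. The missing argument runs as follows. Since $G$ is odd there, positive zeros of $M_1$ are solutions $s>0$ of $G(s)=-c$, and negative zeros $-s$ correspond to solutions of $G(s)=c$. Suppose both equations had $m_1+1$ solutions. Rolle gives $m_1$ zeros of $G'$ interlacing each family. As $G'$ has at most $m_1$ zeros on $s>0$, these are the same points. Hence the open intervals joining the $i$-th solution of $G=-c$ to the $i$-th solution of $G=c$ are pairwise disjoint. Also $c\neq0$, since otherwise $G$ would already have $m_1+1$ zeros. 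So each interval contains a zero of $G$, giving $m_1+1$ positive zeros of $G$ and contradicting the bound for $G$ itself. That last bound is exactly where your kernel justification breaks down when $p>q$, which includes $p=3,q=2$. There the exponent $\frac{p-q}{p-1}\in(0,1)$ is positive, so $I$ is not a Stieltjes/Laplace transform. You need the Wronskian--Vandermonde argument behind Lemma~\ref{chebyshev3}, which is valid for every exponent outside $\mathbb N$.

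\textbf{Gap in (iii) and (iv).} Half-line bounds only give $Z(M_1)^{+}+Z(M_1)^{-}\le 2(m_1+1)$. Now $\lambda=\rho^{p-1}$ is increasing on all of $J$. Write $G'(\rho)=(q-p)\rho^{q-p-1}E(\rho^{p-1})$, where $E$ is the differentiated integral. You must count zeros of $E$ on the whole interval $J'\ni 0$, which requires its Chebyshev property across $\lambda=0$, not on each half separately.

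In (iii) you also need one extra zero of $E$. Rolle yields $Z(M_1)^{+}-1$ and $Z(M_1)^{-}-1$ zeros of $E$ on either side of $0$. Let $\rho_1^-<0<\rho_1^+$ be the zeros closest to the origin, so $G(\rho_1^-)=G(\rho_1^+)=-c$. Because $q-p-1$ is even, either $E(0)=0$, or $G'$ keeps its sign through $\rho=0$, which contradicts $G(\rho_1^-)=G(\rho_1^+)$. Hence $Z(M_1)^{+}+Z(M_1)^{-}-1\le m_1$.

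In (iv) this extra zero is lost, because either $\rho^{q-p-1}$ changes sign at $0$ ($q$ even) or $G$ is singular at $0$ ($p>q$). So the case split is governed by the local behaviour of $\rho^{q-p}$ at the origin, not by the endpoints of $J$.

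\textbf{Sharpness in (iv) for $p>q$.} Your ``Taylor coefficients plus parity'' sketch does not supply the required construction. First make $I(0)=0$, so that $G=\eta(\rho)\rho^{q-1}$ is regular at $0$. Then add a small constant and a small multiple of the $k=0$ moment to create the additional zeros near the origin.
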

Next, we study the case $M_{1}=0$ and $M_{2}\not =0$ under hypotheses (H$_{1}$), (H$_{2}$) and (H$_{3}$).
\begin{theorem}\label{thm2}
    Assume that $M_{1}(\cdot\,;\mu)$ vanishes identically, $M_{2}(\cdot\,;\mu)\not\equiv0$ is the second-order Melnikov function of equation \eqref{perturbation equation} and hypotheses (H$_{1}$), (H$_{2}$) and (H$_{3}$) hold.
    Let $K=\max\{m-m_{1}+n_{1}, m_{1}+1\}$ and $Z(M_{2})^{+}$ (resp. $Z(M_{2})^{-}$) be the number of zeros of $M_{2}(\cdot\,;\mu)$ on $J\cap\mathbb{R}^{+}$ (resp. $J\cap\mathbb{R}^{-}$) taking multiplicities into account.
    Then the following statements hold:
    \begin{itemize}
        \item[(i)] If $p$ and $q$ are both odd, then $Z(M_{2})^{\pm}\leq K$.
        Moreover, there exists $\mu_{0}\in\mathbb{R}^{2(n+m+2)}$ such that $M_{2}(\cdot\,;\mu_{0})$ has $2K$ simple zeros in $J\setminus\{0\}$;
        \item[(ii)] If $p$ is odd and $q$ is even, then $Z(M_{2})^{\pm}\leq K$ and the two equalities cannot hold simultaneously. Moreover,
\begin{itemize}
            \item[(ii.1)] when $p<q$, there exists $\mu_{0}\in\mathbb{R}^{2(n+m+2)}$ such that $M_{2}(\cdot\,;\mu_{0})$ has $2K-1$ simple zeros in $J\setminus\{0\}$.
            \item[(ii.2)] when $p>q$, there exists $\mu_{0}\in\mathbb{R}^{2(n+m+2)}$ such that $M_{2}(\cdot\,;\mu_{0})$ has $2(K-1)$ simple zeros in $J\setminus\{0\}$.
    \end{itemize}
        \item[(iii)] If $p$ is even, $q$ is odd, and $p<q$, then $Z(M_{2})^{+}+Z(M_{2})^{-}\leq K$.
            Moreover, there exists $\mu_{0}\in\mathbb{R}^{2(n+m+2)}$ such that $M_{2}(\cdot\,;\mu_{0})$ has $K$ simple zeros in $J\setminus\{0\}$.
        \item[(iv)] If $p$ is even and either $q$ is even or $p>q$, then $Z(M_{2})^{+}+Z(M_{2})^{-}\leq K+1$.
            Moreover, there exists $\mu_{0}\in\mathbb{R}^{2(n+m+2)}$ such that $M_{2}(\cdot\,;\mu_{0})$ has $K+1$ simple zeros in $J\setminus\{0\}$.
    \end{itemize}
\end{theorem}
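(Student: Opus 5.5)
The plan is to compute $M_2$ explicitly under the constraint $M_1\equiv0$, rewrite it as a linear combination of functions of $\rho$ with coefficients given by integrals of the basis functions, and then reduce every case of the theorem to counting zeros of a single auxiliary function of one variable.

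\textbf{Computing $M_2$.} The standard expressions of Section 3 give, after the change of variable $u=\rho^{p-1}$ suggested by \eqref{x0(t)},
\[
M_1(\rho)=\rho^{p}\int_{-T}^{T}\frac{P_1}{1-(p-1)\rho^{p-1}F_1}\,dt+\rho^{q}\int_{-T}^{T}\frac{Q_1}{(1-(p-1)\rho^{p-1}F_1)^{\frac{q-1}{p-1}+1}}\,dt .
\]
By (H$_1$), $F_1$ is even, so after expanding only the even parts of $P_1,Q_1$ contribute. The family of kernels $\{(1-(p-1)uF_1)^{-\alpha}\}$, together with (H$_2$), shows that $M_1\equiv0$ forces the even parts of $P_1$ and $Q_1$ to vanish. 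The relevant step is an integration-by-parts identity showing that the maps $\tilde f^e_i\mapsto\int\tilde f^e_i\,G(uF_1)$ are injective. Hence $P_1,Q_1$ lie in the odd spans, that is, they are combinations of $\tilde f^o_k$.

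$M_2$ then equals the first-order formula applied to $(P_2,Q_2)$, plus a quadratic correction built from $P_1,Q_1$. Integrating the cross terms by parts with the primitives $\tilde F^e_l$ and $F^o_k$ should turn the correction into integrals of products $\tilde F^e_l\tilde f^o_k$ (from $P_1$ odd interacting with the even part of $Q_2$, or from $Q_1$ with $P_2$). Hypothesis (H$_3$) then places these products in $\mathcal B_{k+l-1}\setminus\mathcal B_{k+l-2}$. The result is that, modulo the kernel weights, $M_2$ is a combination of $\tilde f^e_0,\dots,\tilde f^e_{m_1}$ (from $Q_2$), of $\tilde f^e_j$ (from $P_2$), and of new elements $\tilde F^e_r\tilde f^o_1$ for $r$ up to roughly $(m-m_1)+n_1-1$. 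These come from the $m-m_1$ odd directions of $Q_1$ combined with the $n_1$ non-odd directions of $P$.

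\textbf{Counting zeros.} The effective dimension becomes $K+1$, where $K=\max\{m-m_1+n_1,m_1+1\}$. The inequality $m_1\ge\min\{\dots\}$ in (H$_3$) is what makes the two families nest, rather than add, to give this maximum.

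Next I would write $M_2(\rho)=\rho^{q}\,\Phi(\rho^{p-1})$, or with $\rho^p$ factored depending on which term dominates. Here $\Phi(u)=\int_{-T}^T g(t)\,K(uF_1(t))\,dt$, with $g$ ranging over a $(K+1)$-dimensional space that is, after the substitution $s=F_1(t)$ on $(0,T)$ (monotone by (H$_1$)), an ET-system. Evenness of the integrand lets me restrict to $(0,T)$. The standard result (Section 3.2) that integral transforms with such kernels preserve the Chebyshev property then gives that $\Phi$ has at most $K$ zeros in $u$, counted with multiplicity, with all configurations realizable as simple zeros.

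Parity transfers this to $\rho$:
\begin{itemize}
\item[(i)] If $p,q$ are odd, $u=\rho^{p-1}$ is even in $\rho$ and $\rho^q$ is odd, so the zeros are symmetric, giving $2K$.
\item[(ii)] If $q$ is even, the two sides give opposite signs of the $\rho^q$ factor relative to $\rho^p$. This yields $K$ on each side, but one sign combination is lost, so $2K-1$ zeros when $p<q$ and $2(K-1)$ when $p>q$; this mirrors Theorem \ref{thm1}.
\item[(iii)]–(iv) If $p$ is even, $u=\rho^{p-1}$ is a bijection of $\mathbb R$. So the total count is $K$, or $K+1$ when the ordering allows the extra zero of the factored power at $u$ crossing $0$. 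This is the same bookkeeping as in Theorem \ref{thm1} with $m_1+1$ replaced by $K$.
\end{itemize}

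\textbf{Realization.} Choose coefficients so that $\Phi$ has $K$ prescribed simple zeros. A Chebyshev system allows arbitrary prescribed simple zeros, and independence of the $\mu$-components lets the map $\mu\mapsto$ (coefficients) be surjective onto the span, since (H$_3$) guarantees the new $\mathcal B$-elements are genuinely new. The prescribed zeros must lie in the image of $J$.

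\textbf{Main obstacle.} The hard part is the second-order computation: showing that the quadratic terms reduce precisely to the span $\mathcal B_r$ with the claimed index range, and that these new terms, together with the $\tilde f^e$ terms, still form an extended Chebyshev system after the kernel transform. I would first try to prove a lemma that the ordered family $\{\tilde f^e_0,\dots,\tilde f^e_{m_1},\tilde F^e_{\cdot}\tilde f^o_1,\dots\}$ is CT on $(0,T)$ under (H$_2$)–(H$_3$). If that fails, I would count zeros directly by a Wronskian-type division-derivation argument in $u$.
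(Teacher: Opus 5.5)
There is a genuine gap, and it starts in your computation of $M_2$. Your formula for $M_1$ does not match Lemma~\ref{Melnikovfunction}. The correct expression is $M_1=\rho^p\int_{-T}^{T}P_1\,dt+\rho^p\int_{-T}^{T}Q_1x_0^{q-p}\,dt$. So $P_1$ carries no kernel, and by \eqref{eq:integral_zero} it contributes only $a_{10}\int_{-T}^{T}f_0\,dt$. Consequently $M_1\equiv0$ forces $a_{10}=0$ and $b_{1k}=0$ for $k\in\mathcal N^{\text{non-odd}}_m$, so $Q_1$ is odd. It does \emph{not} force $P_1$ to be odd, and this is essential. If $P_1$ were odd, $S(t,\rho)$ would be even in $t$, the integrand $Q_1Sx_0^{q-1}$ of the cross term would be odd, and the correction would vanish. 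You would then be left with the $m_1+2$ functions already present at first order and could never exceed the first-order count. The surviving correction is $(q-p)M_{22}$, with $M_{22}=\sum_{k\le m-m_1,\,l\le n_1}2\tilde a_{1,l}\tilde b_{1,k}\rho^p\int_0^T\tilde F^e_l\tilde f^o_kx_0^{q-1}\,dt$. It is bilinear in the non-odd coefficients of $P_1$ and the odd coefficients of $Q_1$. $P_2$ and $Q_2$ enter only linearly and never interact with first-order data. Your later sentence about odd directions of $Q_1$ meeting non-odd directions of $P$ describes the right mechanism, but it contradicts your earlier reduction.

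The step you flag as the main obstacle is where the key idea is missing, and your proposed lemma aims at the wrong object. In $M_2$ the functions $\tilde F^e_r\tilde f^o_1$ are weighted by $x_0^{q-1}$, while the $\tilde f^e_i$ are weighted by $x_0^{q-p}$. So a CT property in $t$ of the mixed family $\{\tilde f^e_i\}\cup\{\tilde F^e_r\tilde f^o_1\}$ cannot be fed into Lemma~\ref{chebyshev3}. What you need is the identity \eqref{dengs}. Since $\partial_t x_0^{q-p}=(q-p)f_1x_0^{q-1}$ with $f_1=\tilde f^o_1$, and $\tilde F^e_i(0)=\tilde F^e_i(T)=0$ for $i\ge1$, integration by parts gives $-(q-p)\int_0^T\tilde F^e_i\tilde f^o_1x_0^{q-1}\,dt=\int_0^T\tilde f^e_ix_0^{q-p}\,dt$. (H$_3$) places every $\tilde F^e_l\tilde f^o_k$ with $k\le m-m_1$, $l\le n_1$ in $\mathcal B_{K-1}$. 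Together these show $M_2/\rho^p\in\operatorname{span}\{1,\int_0^T\tilde f^e_ix_0^{q-p}\,dt\}_{0\le i\le K-1}$, which is exactly the first-order family with $m_1$ replaced by $K-1$. Its ECT property is Proposition~\ref{chebyshev4}, using (H$_2$) for indices beyond $m_1$. The constant $1$ is handled through Lemma~\ref{964}, not by a single kernel transform as in your $\Phi$. Only then does the bookkeeping of Theorem~\ref{thm1} transfer verbatim.

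This also corrects why you get a maximum rather than a sum. After \eqref{dengs}, the $Q_2$-terms with $i\ge1$ give $\mathcal B_{m_1}$ and the cross terms lie in $\mathcal B_{m-m_1+n_1-1}$, both with weight $x_0^{q-1}$ and both members of one nested flag. The numerical inequality in (H$_3$) is used instead for realization, where your surjectivity claim is unjustified. Because the cross term is bilinear, you must freeze one factor ($\tilde b_{1,m-m_1}=1$ or $\tilde a_{1,n_1}=1$) and invoke the spanning identity \eqref{calim1} derived from the first part of (H$_3$). You then use $m_1\ge\min\{m-m_1-1,n_1-1\}$ so that $Q_2$ supplies the missing low-index elements $\tilde F^e_i\tilde f^o_1$, as in Proposition~\ref{proposition1}.
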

\begin{remark}
We have the following comments on Theorems \ref{thm1} and \ref{thm2}.
    \begin{itemize}
\item[(a)] As will be seen from the proofs of the theorems, (H$_{1}$) and (H$_{2}$) provide the fundamental hypotheses for obtaining the explicit estimates for the maximum number of zeros of the Melnikov functions $M_{1}$ and $M_{2}$ of equation \eqref{perturbation equation}. 
It is also worth noting that, as the higher-order analysis proceeds, more information on the algebraic structure of the vector space spanned by $\mathcal{F}$ (or $\mathcal{F}_{n}$ and $\mathcal{F}_{m}$) is needed. Indeed, from the theorems, it is sufficient to consider $M_{1}$ under hypotheses (H$_{1}$) and (H$_{2}$), whereas the analysis of $M_{2}$ additionally requires hypothesis (H$_{3}$). We illustrate this point in Section \ref{Applications} by considering equation \eqref{perturbation equation} with $\mathcal{F}$ chosen from families of trigonometric polynomials, polynomials and hyperbolic functions.
\item[(b)] The inequality in hypothesis (H$_3$), $m_1\geq \min\{m-m_1-1,n_1-1\}$, is rather mild and is satisfied by a broad class of natural perturbation spaces. 
Indeed, odd and non-odd basis functions commonly occur in comparable numbers within each truncation. 
In particular, if $\mathcal F_m$ contains at least as many non-odd functions as odd functions, then $m_1+1\geq m-m_1$ and hence the above inequality holds automatically, independently of $n_1$.
This balanced parity distribution occurs in all three coefficient families considered in Section~5. 
Thus, the inequality in (H$_3$) does not impose a substantial restriction.
\item[(c)] The two theorems show that the maximum numbers of zeros of $M_{1}$ and $M_{2}$ are closely related to the distribution of odd and non-odd functions in $\mathcal{F}$.
Moreover, according to our approach, the study reduces to a simple counting problem: determining the number of non-odd functions in $\mathcal{F}_{n}$ and $\mathcal{F}_{m}$.
\item[(d)] It is interesting to compare the maximum numbers of zeros of $M_1$ and $M_{2}$. Clearly, the estimate obtained for $M_2$ is greater than (resp. coincides with) that obtained for $M_1$ when $m - m_{1} + n_{1} > m_{1} + 1$ (resp. $m - m_{1} + n_{1} \leq m_{1} + 1$). Notice that $m-m_1$ and $m_1+1$ represent, respectively, the numbers of odd and non-odd functions in $\mathcal F_m$. Thus, in contrast to the analysis of $M_1$, the number of odd functions in $\mathcal F_m$ becomes an important factor and can lead to an increase in the number of zeros of the higher-order Melnikov functions.
\end{itemize}
\end{remark}

The preceding results yield the following lower bounds for \(\mathcal H_{3,2}(n,m)\).

\begin{theorem}\label{thm3}
    Let $n,m\geq 1$, $m_{1}=\#\mathcal{N}^{\text{non-odd}}_{m}-1$ and $n_{1}=\#\mathcal{N}^{\text{non-odd}}_{n}-1$.
    The Hilbert number for the Abel differential equation \eqref{Abel equation} with $p=3$ and $q=2$ satisfies
    \begin{itemize}
        \item[(i)] $\mathcal{H}_{3,2}(n,m)\geq 2m_{1}+1$ if hypotheses (H$_{1}$) and (H$_{2}$) hold;
        \item[(ii)] $\mathcal{H}_{3,2}(n,m)\geq 2\max\{m-m_{1}+n_{1}, m_{1}+1\}-1$ if hypotheses (H$_{1}$), (H$_{2}$) and (H$_{3}$) hold.
    \end{itemize}
\end{theorem}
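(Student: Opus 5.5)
Theorem \ref{thm3} follows from Theorems \ref{thm1} and \ref{thm2} in the case $p=3$, $q=2$, which falls under case (ii.2) of both results because $p$ is odd, $q$ is even and $p>q$. The plan is to produce, for suitable parameters $\mu_0$, a Melnikov function with the required number of simple zeros in a compact subinterval of $J\setminus\{0\}$. Then, by the persistence mechanism described at the end of Subsection \ref{setting}, the perturbed equation \eqref{perturbation equation} has that many hyperbolic, hence isolated, limit cycles for small $\varepsilon\neq0$. The coefficients of \eqref{perturbation equation} are $A=f_1+\varepsilon P_1+\varepsilon^2P_2\in\operatorname{span}(\mathcal F_n)$ (recall $n\ge1$, so $f_1\in\mathcal F_n$) and $B=\varepsilon Q_1+\varepsilon^2Q_2\in\operatorname{span}(\mathcal F_m)$. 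So each such equation is an instance of \eqref{Abel equation}, and its limit cycles bound $\mathcal H_{3,2}(n,m)$ from below.

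\textbf{First-order bound.} For (i), Theorem \ref{thm1}(ii.2) gives $\mu_0$ such that $M_1(\cdot;\mu_0)$ has $2m_1$ simple zeros in $J\setminus\{0\}$. Each simple zero persists as a hyperbolic limit cycle, giving $2m_1$ limit cycles. The statement asks for $2m_1+1$, so one more cycle is needed. I would take it to be $x=0$, which is always a closed solution of \eqref{Abel equation} since $p,q\ge2$. Its isolation has to be checked. Near $x=0$ the return map is $\rho\mapsto\rho+\rho^2\int_{-T}^{T}B\,dt+O(\rho^3)$. Since $\int f_k=0$ for $k\ge1$, we have $\int_{-T}^{T} B\,dt=(\varepsilon b_{10}+\varepsilon^2b_{20})\int f_0\,dt$. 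If this is nonzero, $0$ is an isolated (semistable) zero of the displacement.

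This requires $b_{10}\neq0$, or choosing $b_{20}\neq0$ freely. If the construction of Theorem \ref{thm1} forces $b_{10}=0$, I would add a small second-order term $\varepsilon^2 b_{20}f_0$. This leaves $M_1$, and hence the $2m_1$ simple zeros bounded away from $0$, unchanged, and it makes the origin isolated. One must also make sure no other cycle escapes into or emerges from the origin. The persistent cycles lie in a compact set away from $0$, while the origin itself is a single isolated cycle, so counting distinct closed solutions gives at least $2m_1+1$.

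\textbf{Second-order bound.} For (ii), Theorem \ref{thm2}(ii.2) gives $2(K-1)$ simple zeros of $M_2(\cdot;\mu_0)$ in $J\setminus\{0\}$ with $M_1\equiv0$. The same persistence argument, applied to $\widehat D$ with $k=2$, yields $2K-2$ hyperbolic limit cycles. Adding the isolated origin as above, via a small $b_{20}f_0$ term if needed, gives $2K-1$. The condition $M_1\equiv0$ presumably forces $b_{10}=0$. I also need to check that perturbing $b_{20}$ keeps the simple zeros of $M_2$, either because $b_{20}$ is free in the construction or because the change to $M_2$ can be made small. Alternatively, I could perturb at order $\varepsilon^3$ in $Q$, which is still within $\operatorname{span}(\mathcal F_m)$ and does not affect $M_2$.

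\textbf{Main obstacle.} The delicate step is isolating the trivial cycle $x=0$ without destroying the zeros already obtained. The cleanest route is to add a term of order strictly higher than the Melnikov order in use: $\varepsilon^2 b f_0$ in case (i) and $\varepsilon^3 b f_0$ in case (ii). Since $M_k$ is unchanged, the simple zeros still persist. The $\rho^2$ coefficient at the origin becomes a nonzero multiple of $\varepsilon^{k+1}$, which makes $0$ an isolated fixed point for each small $\varepsilon\neq0$. Because the origin is separated from the compact set containing the other zeros, the total count adds up to the claimed bound.
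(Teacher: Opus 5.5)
Your proposal is correct, and its overall structure is the paper's: specialize to case (ii.2) of Theorems~\ref{thm1} and~\ref{thm2}, let the $2m_1$ (resp.\ $2(K-1)$) simple zeros persist as hyperbolic limit cycles, and add the trivial solution $x=0$. The one genuine difference is how you show that $x=0$ is isolated.

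\textbf{The paper's argument.} The paper does not modify $\mu_0$. Since $M_k(\cdot;\mu_0)\not\equiv0$, there is a $\rho_*$ with $M_k(\rho_*;\mu_0)\neq0$, so $D_\varepsilon(\rho_*)\neq0$ for all small $\varepsilon\neq0$. Thus $D_\varepsilon$ is analytic and not identically zero on an interval containing $0$. Its zero at $\rho=0$ is therefore automatically isolated.

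\textbf{Your argument.} You compute the $\rho^2$-coefficient of the displacement,
\[
\int_{-T}^{T}B\,dt=(\varepsilon b_{10}+\varepsilon^2 b_{20}+\cdots)\int_{-T}^{T}f_0\,dt .
\]
When this might vanish (e.g.\ $b_{10}=0$, which $M_1\equiv0$ indeed forces), you add a term $\varepsilon^{k+1}b f_0$ to $B$. This stays in $\operatorname{span}(\mathcal F_m)$ and leaves $M_k$ unchanged.

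This route is valid and gives a little more information: the trivial cycle becomes semistable, of multiplicity exactly two. The cost is an extra change of the family plus an explicit local expansion. The paper's analyticity argument shows this step is unnecessary, because $M_k\not\equiv0$ already guarantees the origin is isolated.

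Your concern about cycles ``escaping into'' the origin does not matter for a lower bound. The persistent cycles are hyperbolic and lie near the fixed nonzero zeros $\rho_i$. They are therefore distinct from, and counted in addition to, the isolated trivial solution.
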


The results of this section provide a lower bound counterpart to Problem \ref{problem} by quantifying the number of limit cycles that can be realized in general Chebyshev spaces.
In contrast to previous results restricted to low-dimensional spaces or specific families of coefficients, our framework applies to coefficient spaces of arbitrary dimensions under suitable Chebyshev hypotheses. 
The applications to several representative coefficient families will be presented in Section 5.

\section{Preliminaries}
This section collects several preliminary results that will be used in the proof of our main theorems. 
We first derive the expressions of the first- and second-order Melnikov functions of equation \eqref{perturbation equation}, and then recall some useful properties of Chebyshev systems.
\subsection{Melnikov functions for equation \eqref{perturbation equation}}
We begin by introducing a useful result from \cite{HTV}.
This result provides a convenient formula for the Melnikov functions of perturbations around a one-dimensional periodic annulus and will be applied to equation \eqref{perturbation equation}.
Consider the periodic perturbed differential equation of the form
\begin{equation}\label{periodic perturbed equation}
    \frac{dx}{dt}=h(x)f(t)+H(t,x;\varepsilon),
\end{equation}
where
\begin{itemize}
    \item $h$ is analytic on $\mathbb{R}$ with $h(0)=0$, $h\not\equiv 0$,
    \item $f$ is a $2T$-periodic analytic function with $\int_{-T}^{T}f(t)dt=0$,
    \item $H$ is an analytic function on $\mathbb{R} \times \mathbb{R} \times ( -\varepsilon_{0}, \varepsilon_{0})$, for some $\varepsilon_{0} > 0$, such that $t \mapsto H(t, x;\varepsilon)$ is $2T$-periodic and $H(t, x; 0) \equiv 0$.
    Moreover, \(H\) has the Taylor expansion
    \begin{equation*}
        H(t,x;\varepsilon)=\sum_{i=1}^{\infty}l_{i}(t,x)\varepsilon^{i}.
    \end{equation*}
\end{itemize}
Under the above conditions, the unperturbed equation \eqref{periodic perturbed equation}$\mid_{\varepsilon=0}$ possesses a periodic annulus around $x=0$.
Consequently, there exists an open interval $V$ containing $0$, such that $x_{0}(T,\rho)=\rho$ for any $\rho\in V$, where $x_{0}(t,\rho)$ is the solution of \eqref{periodic perturbed equation}$\mid_{\varepsilon=0}$ with initial condition $x_{0}(-T,\rho)=\rho$.
By the analytic dependence of solutions with respect to initial conditions and parameters, we can expand $x(T,\rho;\varepsilon)$ in a power series in $\varepsilon$ as
\begin{equation*}
    x(T,\rho;\varepsilon)=\rho+\sum_{i=1}^{\infty}M_{i}(\rho)\varepsilon^{i}.
\end{equation*}
The following result from \cite{HTV} provides the expressions of the first- and second-order Melnikov functions for equation \eqref{periodic perturbed equation}.
\begin{lemma}(\cite{HTV})\label{lemma1}
     The first-order Melnikov function of equation \eqref{periodic perturbed equation} is
     \begin{equation*}
        M_{1}(\rho;\mu)=h(\rho)S_{1}(T,\rho),
     \end{equation*}
     where $\rho\in V\setminus\{x\mid h(x)=0\}$ and 
     \begin{equation*}
        S_{1}(t,\rho)=\int_{-T}^{t}\frac{l_{1}(s,x)}{h(x)}\mid_{x=x_{0}(s,\rho)}ds.
     \end{equation*}
If \(M_1\equiv0\), the second-order Melnikov function is
          \begin{equation*}
        M_{2}(\rho;\mu)=h(\rho)S_{2}(T,\rho),
     \end{equation*}
     where $\rho\in V\setminus\{x\mid h(x)=0\}$ and
    \begin{equation*}
        S_{2}(t,\rho)=\int_{-T}^{t}\left(\frac{l_{2}(s,x)}{h(x)}+h(x)\cdot\partial_{x}\left(\frac{l_{1}(s,x)}{h(x)}\right)\cdot S_{1}(s,\rho)\right)\Big|_{x=x_{0}(s,\rho)}ds.
    \end{equation*}
\end{lemma}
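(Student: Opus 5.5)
The idea is to straighten the unperturbed flow by a time-independent change of the dependent variable, so that the perturbed displacement becomes an integral of $H/h$ along the solution. Expanding that integral in $\varepsilon$ then gives $M_1$ and $M_2$ directly.

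\emph{Step 1: rectifying coordinate.} Fix $\rho\in V$ with $h(\rho)\neq0$. Zeros of $h$ are equilibria of \eqref{periodic perturbed equation}$|_{\varepsilon=0}$, so by uniqueness $x_0(t,\rho)$ never meets a zero of $h$ for $t\in[-T,T]$. Hence $x_0([-T,T],\rho)$ lies in an open interval $I_\rho$ on which $h\neq0$. On $I_\rho$ set $G(x)=\int_{\rho}^{x}\frac{d\xi}{h(\xi)}$, which is analytic with $G'=1/h$. With $F(t)=\int_{-T}^{t}f(s)\,ds$ we get $G(x_0(t,\rho))=F(t)$, and $F(T)=0$ recovers $x_0(T,\rho)=\rho$.

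\emph{Step 2: integral identity.} Let $x(t,\rho;\varepsilon)$ be the perturbed solution. For $\varepsilon$ small it stays in $I_\rho$ uniformly in $t\in[-T,T]$. Put $u(t)=G(x(t,\rho;\varepsilon))-F(t)$. Then $u'(t)=H(t,x;\varepsilon)/h(x)$ and $u(-T)=0$. Since $F(T)=0$, we obtain
\[
G\big(x(T,\rho;\varepsilon)\big)=\int_{-T}^{T}\frac{H(s,x(s,\rho;\varepsilon);\varepsilon)}{h(x(s,\rho;\varepsilon))}\,ds .
\]
More generally $G(x(t))=F(t)+\int_{-T}^{t}H/h\,ds$ for every $t$.

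\emph{Step 3: expansion in $\varepsilon$.} Write $x(t)=x_0(t)+\varepsilon x_1(t)+\varepsilon^2x_2(t)+\cdots$.
\begin{itemize}
\item[(a)] On the left, Taylor expansion of $G$ at $\rho$, using $x(T)-\rho=\varepsilon M_1+\varepsilon^2M_2+\cdots$, gives
\[
G(x(T))=\frac{\varepsilon M_1+\varepsilon^2 M_2}{h(\rho)}-\frac{h'(\rho)}{2h(\rho)^2}\varepsilon^2M_1^2+O(\varepsilon^3).
\]
\item[(b)] On the right, $H=\varepsilon l_1+\varepsilon^2 l_2+\cdots$ and $l_1(s,x)/h(x)$ is expanded about $x_0$. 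The right-hand side therefore equals
\[
\varepsilon S_1(T,\rho)+\varepsilon^2\int_{-T}^{T}\Big(\frac{l_2}{h}+\partial_x\Big(\frac{l_1}{h}\Big)x_1\Big)\Big|_{x=x_0(s,\rho)}ds+O(\varepsilon^3).
\]
\item[(c)] Comparing the $\varepsilon$-coefficients gives $M_1=h(\rho)S_1(T,\rho)$.
\item[(d)] To identify $x_1$, use the identity of Step 2 at a general time $t$. Expanding to first order gives $x_1(t)/h(x_0(t))=S_1(t,\rho)$, so $x_1(t)=h(x_0(t,\rho))S_1(t,\rho)$.
\item[(e)] If $M_1\equiv0$, the quadratic term $M_1^2$ in (a) disappears. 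Substituting the expression for $x_1$ from (d) into the $\varepsilon^2$ coefficient then gives exactly $M_2=h(\rho)S_2(T,\rho)$.
\end{itemize}

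The main point requiring care is Step 1 together with the uniformity in Step 2. One must check that $G$ is well defined along both the unperturbed and the perturbed orbits, and that the expansions are legitimate, i.e.\ analytic in $\varepsilon$ uniformly in $t\in[-T,T]$. The first uses the invariance of the zero set of $h$. The second follows from analytic dependence on parameters on the compact interval $[-T,T]$, applied to a fixed neighbourhood of $x_0([-T,T],\rho)$ inside $I_\rho$. The remaining computations are routine bookkeeping of Taylor coefficients.
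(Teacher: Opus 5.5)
Your proof is correct. Note that the paper gives no proof of its own for this lemma. It imports it from \cite{HTV} and only remarks that periodicity in $t$ plays no role in the derivation, which is what justifies using it on $[-T,T]$. Your argument is self-contained and makes that remark transparent, because the only property of $f$ you use is $F(T)=\int_{-T}^{T}f=0$.

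The standard alternative goes through the variational equations. With the fundamental solution $\partial_\rho x_0=h(x_0(t,\rho))/h(\rho)$ one gets $x_1=h(x_0)S_1$ at once. At second order, however, the source term contains $\tfrac12 h''(x_0)f\,x_1^2$. This has to be integrated by parts, using $x_0'=h(x_0)f$ and $S_1'=l_1/h$, into $-h'(x_0)S_1\,l_1/h$ plus the boundary term $\tfrac12 h'(\rho)S_1(T,\rho)^2$. The first piece produces the $-l_1h'/h$ part of $h\,\partial_x(l_1/h)$. The boundary term vanishes only because $M_1\equiv0$ forces $S_1(T,\rho)=0$.

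In your route, the rectifying coordinate $G$ conjugates the unperturbed flow to $\dot y=f(t)$, so the identity $G(x(T))=\int_{-T}^{T}H/h$ is exact. The same boundary term reappears simply as $-\tfrac{h'(\rho)}{2h(\rho)^2}M_1^2$ in the Taylor expansion of $G$ at $\rho$. As a by-product you get the second-order formula without assuming $M_1\equiv0$, namely $M_2=h(\rho)S_2(T,\rho)+\tfrac12 h'(\rho)h(\rho)S_1(T,\rho)^2$.

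The variational approach is more mechanical and does not rely on an explicit first integral of the unperturbed equation. Yours is shorter, avoids the integration by parts, and shows exactly where each term comes from.
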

Inspection of the proof in \cite{HTV} shows that periodicity in \(t\) is used only to interpret the endpoint map as a return map, not in the derivation of the formulas. 
The same calculation therefore applies on the finite interval \([-T,T]\).
Hence, we may apply Lemma \ref{lemma1} to equation \eqref{perturbation equation} and obtain the following result.
\begin{lemma}\label{Melnikovfunction}
    The first-order Melnikov function of equation \eqref{perturbation equation} is
    \begin{equation*}
        M_{1}(\rho;\mu)=\rho^{p}\int_{-T}^{T}\left(P_{1}(t)+Q_{1}(t)x^{q-p}_{0}(t,\rho)\right)dt,
    \end{equation*}
    where $x_{0}(t,\rho)$ is given in \eqref{x0(t)}.
    When $M_{1}\equiv 0$, the second-order Melnikov function of equation \eqref{perturbation equation} can be written as
    \begin{equation*}
        M_{2}(\rho;\mu)=\rho^{p}\int_{-T}^{T}\left(P_{2}(t)+Q_{2}(t)x^{q-p}_{0}(t,\rho)\right)dt+(q-p)\rho^{p}\int_{-T}^{T}\left(Q_{1}(t)S(t,\rho)x_{0}^{q-1}(t,\rho)\right)dt,
    \end{equation*}
    where 
    \begin{equation*}
        S(t,\rho)=\int_{-T}^{t}\left(P_{1}(s)+Q_{1}(s)x^{q-p}_{0}(s,\rho)\right)ds.
    \end{equation*}
\end{lemma}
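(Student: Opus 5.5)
We apply Lemma \ref{lemma1} to equation \eqref{perturbation equation} by writing it in the form \eqref{periodic perturbed equation}. Take $h(x)=x^{p}$ and $f=f_{1}$. By \eqref{eq:integral_zero}, $\int_{-T}^{T}f_{1}\,dt=0$, so the hypotheses on $f$ hold on the interval $[-T,T]$. As remarked before the statement, periodicity is used only to interpret the endpoint map. Expanding $H$ in $\varepsilon$ gives
\[
l_{1}(t,x)=P_{1}(t)x^{p}+Q_{1}(t)x^{q}, \qquad l_{2}(t,x)=P_{2}(t)x^{p}+Q_{2}(t)x^{q}.
\]
The unperturbed solution $x_{0}(t,\rho)$ is the explicit one in \eqref{x0(t)}. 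The annulus $V$ is replaced by the interval $J$ in \eqref{J}, on which $x_{0}(T,\rho)=\rho$.

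For the first-order function, we have $l_{1}/h=P_{1}(t)+Q_{1}(t)x^{q-p}$. Evaluating along $x=x_{0}(s,\rho)$ gives $S_{1}(t,\rho)=S(t,\rho)$, with $S$ as in the statement. Lemma \ref{lemma1} then yields
\[
M_{1}(\rho;\mu)=\rho^{p}S(T,\rho),
\]
which is the claimed formula for $\rho\in J\setminus\{0\}$. At $\rho=0$ both sides are analytic: $x_{0}(t,0)=0$, and the right-hand side extends analytically because $\rho^{p}x_{0}^{q-p}=\rho^{q}\big(1+(1-p)\rho^{p-1}F_{1}\big)^{-(q-p)/(p-1)}$. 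The identity therefore extends by continuity. This extension needs care when $q<p$: one should note that $\rho^{p}x_{0}^{q-p}$ is still well defined and analytic at $\rho=0$.

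For the second-order function, we compute
\[
h(x)\,\partial_{x}\!\left(\frac{l_{1}}{h}\right)=x^{p}(q-p)Q_{1}(t)x^{q-p-1}=(q-p)Q_{1}(t)x^{q-1}.
\]
We also have $l_{2}/h=P_{2}+Q_{2}x^{q-p}$. Substituting these into $S_{2}(T,\rho)$ and multiplying by $\rho^{p}$ gives exactly the stated expression:
\[
M_{2}(\rho;\mu)=\rho^{p}\int_{-T}^{T}\big(P_{2}+Q_{2}x_{0}^{q-p}\big)\,dt+(q-p)\rho^{p}\int_{-T}^{T}Q_{1}\,S\,x_{0}^{q-1}\,dt.
\]
This uses the hypothesis $M_{1}\equiv0$ required by Lemma \ref{lemma1}.

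The main point needing care is justifying the use of Lemma \ref{lemma1}. Its setting assumes a global analytic $H$ that is periodic in $t$, whereas here we work on the finite interval $J$ and need no periodicity. This is settled by the remark preceding the statement: the derivation in \cite{HTV} is a variation-of-constants expansion of the solution in $\varepsilon$, valid wherever $x_{0}$ exists on $[-T,T]$. What remains is routine: the extension to $\rho=0$ by analyticity, and checking that $x_{0}>0$ (or that real powers are well defined) for $\rho\in J$.
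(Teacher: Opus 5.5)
Your proposal is correct and follows the paper's proof, which is exactly this direct application of Lemma \ref{lemma1} with the identifications $h(x)=x^{p}$, $f=f_{1}$, $l_{i}(t,x)=P_{i}(t)x^{p}+Q_{i}(t)x^{q}$, including your computation $h\,\partial_x(l_1/h)=(q-p)Q_1x^{q-1}$. Your additional remarks on extending the formulas analytically to $\rho=0$ and on not needing periodicity are sound; the paper covers the latter only through the remark preceding the lemma.
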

\begin{proof}
The result follows directly from Lemma \ref{lemma1} applied to equation \eqref{perturbation equation}, with the identifications
    $h(x)=x^{p}$, $f(t)=f_{1}(t)$, $l_{i}(t,x)=P_{i}(t)x^{p}+Q_{i}(t)x^{q}$, $i=1,2$.
\end{proof}

\subsection{Chebyshev systems}\label{3.2}
In this subsection, we present a brief overview of Chebyshev systems, together with some of their useful properties.
For further information, we refer the reader to \cite{KS,Kru,Maz}.
\begin{definition}\cite{KS}
    Let $(f_{0},f_{1},\cdots,f_{n})$ be a set of analytic functions on an open interval $I\subset \mathbb{R}$.
    This set is an extended complete Chebyshev system (ECT-system) on $I$ if for every $k\in\{0, 1, \cdots, n\}$, any nontrivial linear combination
        $$\alpha_{0} f_{0}(x) + \alpha_{1} f_{1}(x) + \cdots+ \alpha_{k}f_{k}(x)$$ 
        has at most $k$ isolated zeros on $I$ counted with multiplicities.    
\end{definition}
\begin{remark}
If $(f_0,\ldots,f_n)$ is an ECT-system on $I$, then, for any $n$ distinct prescribed points $x_1,\ldots,x_n\in I$, there exists a nontrivial linear combination of $f_0,\ldots,f_n$ having exactly these $n$ zeros, all of which are simple; see \cite{KS}. 
This realization property is frequently used in applications of ECT-systems to limit cycle problems, since it allows one to prescribe simple zeros of the corresponding Melnikov functions.
\end{remark}
Clearly, if an ordered set of functions forms an ECT-system, then it is naturally also a CT-system. 
The following definition introduces the Wronskian determinant, which provides a convenient criterion for determining whether a given set of functions constitutes an ECT-system (see, for instance, \cite{KS,Maz}).
\begin{definition}
    Let $f_{0},f_{1},\cdots,f_{n}$ be analytic functions on an open interval $I\subset \mathbb{R}$.
    Then 
    \begin{equation*}
        W[f_0, f_1, \dots, f_n](x) = \det \left( f_j^{(i)}(x) \right)_{0 \le i,j \le n} = \begin{vmatrix}
 f_0(x) & \cdots & f_n(x) \\
 f_0'(x) & \cdots & f_n'(x) \\
 \vdots & \ddots & \vdots \\
 f_0^{(n)}(x) & \cdots & f_n^{(n)}(x)
\end{vmatrix}
    \end{equation*}
    is the Wronskian of $(f_{0},f_{1},\cdots,f_{n})$ at $x\in I$.
\end{definition}

\begin{lemma}(\cite{KS,Maz})
    $(f_{0},f_{1},\cdots,f_{n})$ is an ECT-system on an open interval $I \subset  \mathbb{R}$  if and only if
for each $k = 0, 1,\cdots, n$,
\begin{equation*}
    W[f_0, f_1, \dots, f_k](x)\not=0\text{ for all }x\in I.
\end{equation*}
\end{lemma}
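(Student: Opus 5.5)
The plan is to prove the two directions separately, using the Wronskian as the bridge between "number of zeros counted with multiplicity" and linear algebra on jets. Throughout, for a nontrivial combination $g=\sum_{i=0}^{k}\alpha_i f_i$ and a point $x_0\in I$, the jet vector $(g(x_0),g'(x_0),\dots,g^{(k)}(x_0))^{T}$ equals the matrix $\big(f_j^{(i)}(x_0)\big)_{0\le i,j\le k}$ applied to $(\alpha_0,\dots,\alpha_k)^{T}$. Its determinant is exactly $W[f_0,\dots,f_k](x_0)$.

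\emph{Necessity.} Suppose that for some $k$ and some $x_0\in I$ we have $W[f_0,\dots,f_k](x_0)=0$. Then the matrix above has a nontrivial kernel vector $(\alpha_0,\dots,\alpha_k)$. The corresponding $g$ satisfies $g(x_0)=g'(x_0)=\cdots=g^{(k)}(x_0)=0$, so $x_0$ is a zero of multiplicity at least $k+1$. Two cases arise:
\begin{itemize}
\item If $g\not\equiv 0$, this single zero already contradicts the ECT bound of $k$ zeros counted with multiplicity.
\item If $g\equiv 0$, then $f_0,\dots,f_k$ are linearly dependent. Pick $k$ distinct points in $I$; by the realization property for ECT-systems, some combination of $f_0,\dots,f_{k-1}$ (or $f_0,\dots,f_k$) has those as simple zeros. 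Adding a suitable multiple of the dependence relation produces a nontrivial combination of $f_0,\dots,f_{k-1}$ with $k$ zeros, again a contradiction. Equivalently, a dependence among $f_0,\dots,f_k$ lets one manufacture a combination with too many zeros.
\end{itemize}
So every $W[f_0,\dots,f_k]$ is nonvanishing on $I$.

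\emph{Sufficiency.} I would argue by induction on $n$ using the classical "divide and differentiate" reduction. For $n=0$, $W[f_0]=f_0\neq 0$, so $\alpha_0 f_0$ has no zeros. For the inductive step, since $f_0$ never vanishes, set $g_i=(f_i/f_0)'$ for $i=1,\dots,n$. The key identity is
\[
W[f_0,f_1,\dots,f_j]=f_0^{\,j+1}\,W[g_1,\dots,g_j],\qquad j=1,\dots,n.
\]
It follows from $W[\phi f_0,\dots,\phi f_j]=\phi^{j+1}W[f_0,\dots,f_j]$ with $\phi=1/f_0$, together with expanding along the first column $(1,0,\dots,0)^{T}$ of $W[1,f_1/f_0,\dots,f_j/f_0]$. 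Hence all Wronskians of $(g_1,\dots,g_n)$ are nonvanishing on $I$, and by induction $(g_1,\dots,g_n)$ is an ECT-system on $I$ (indices shifted by one). Now take a nontrivial $g=\sum_{i=0}^{k}\alpha_i f_i$. If it had at least $k+1$ zeros counted with multiplicity, then so would $g/f_0$, because $f_0\neq 0$ preserves multiplicities. The analytic Rolle argument then applies. A zero of multiplicity $r\geq 1$ of $g/f_0$ gives a zero of multiplicity $r-1$ of $(g/f_0)'$. Between consecutive distinct zeros there is a further zero of $(g/f_0)'$. Together these give $(g/f_0)'=\sum_{i=1}^{k}\alpha_i g_i$ at least $k$ zeros counted with multiplicity.

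It remains to rule out degenerate combinations:
\begin{itemize}
\item If $\alpha_1=\cdots=\alpha_k=0$, then $g=\alpha_0 f_0$ has no zeros at all.
\item Otherwise the combination $\sum_{i=1}^{k}\alpha_i g_i$ is nontrivial. It is not identically zero because its Wronskian is nonzero, so the $g_i$ are linearly independent; this also makes all zeros isolated. Having $k$ zeros then contradicts the ECT property of $(g_1,\dots,g_k)$, which allows at most $k-1$.
\end{itemize}

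I expect the main obstacle to be the bookkeeping in the sufficiency direction. The Wronskian factorization identity must be verified carefully, and multiplicities must be tracked correctly under Rolle's theorem. Both are standard, and I would cite \cite{KS,Maz} for the identity if a full computation is not desired. In the necessity direction, the only subtle point is excluding $g\equiv 0$, which follows from the linear independence forced by the ECT property at level $k-1$.
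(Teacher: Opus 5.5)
The paper gives no proof of this lemma; it imports it from \cite{KS,Maz}. Your argument is the classical one and it is correct. For necessity, the jet matrix at $x_0$ is exactly the Wronskian matrix. For sufficiency, you use the factorization $W[f_0,\dots,f_j]=f_0^{\,j+1}W[(f_1/f_0)',\dots,(f_j/f_0)']$ followed by the Rolle count with multiplicities. That factorization is the same mechanism behind the paper's Lemmas \ref{chebyshev1} and \ref{chebyshev2}: multiply by the non-vanishing function $1/f_0$, then pass from $(1,h_1,\dots,h_n)$ to $(h_1',\dots,h_n')$. Your use of linear independence of the $g_i$ to exclude $\sum_{i\ge1}\alpha_i g_i\equiv0$ is what makes the multiplicities in the Rolle step finite.

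The one place to tighten is the subcase $g\equiv0$ in the necessity direction.
\begin{itemize}
\item Under the standard Karlin--Studden definition, a nontrivial combination that vanishes identically has infinitely many zeros, so it is excluded outright. The detour through the realization property is then unnecessary.
\item As written, the detour also has two loose ends. The realization must be applied to $(f_0,\dots,f_k)$, since $(f_0,\dots,f_{k-1})$ only allows $k-1$ prescribed zeros. Eliminating $f_k$ needs $\alpha_k\neq0$, which you get by taking $k$ minimal with $W[f_0,\dots,f_k]$ vanishing somewhere.
\item If one reads the paper's definition literally (``at most $k$ \emph{isolated} zeros''), the ``only if'' direction is false. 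The family $(f_0,f_1)=(1,1)$ satisfies the definition vacuously, yet $W[1,1]\equiv0$.
\item The realization property you invoke also fails for $(1,1)$, so your detour cannot rescue that reading.
\end{itemize}
So necessity requires that the definition exclude identically vanishing nontrivial combinations. Once that is granted, the case $g\equiv0$ is immediate.
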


\begin{lemma}(\cite{Kru})\label{chebyshev1}
     The set of ordered functions $(1,f_{1},\cdots,f_{n})$ is an ECT-system on an interval $I$ if and only if $(f^{'}_{1},\cdots,f^{'}_{n})$ is an ECT-system on $I$.
\end{lemma}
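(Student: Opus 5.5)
The plan is to reduce both sides of the equivalence to Wronskian conditions and then use the preceding lemma from \cite{KS,Maz}, which says that an ordered family is an ECT-system on $I$ if and only if all of its leading Wronskians are nonvanishing on $I$. Everything then comes down to one determinant identity relating the Wronskians of $(1,f_1,\dots,f_k)$ to those of $(f_1',\dots,f_k')$.

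\textbf{Step 1 (the identity).} Fix $k\in\{1,\dots,n\}$ and write out $W[1,f_1,\dots,f_k](x)$. Its first column is $(1,0,\dots,0)^{T}$, since every derivative of the constant function $1$ vanishes. Expanding along this column gives the minor obtained by deleting the first row and first column. That minor is $\det\big(f_j^{(i)}(x)\big)_{1\le i,j\le k}$. Re-indexing the rows as $i-1$, its rows are $f_j',f_j'',\dots,f_j^{(k)}$, so it is $\det\big((f_j')^{(i)}(x)\big)_{0\le i\le k-1,\,1\le j\le k}$. Hence
\[
W[1,f_1,\dots,f_k](x)=W[f_1',\dots,f_k'](x)\qquad\text{for all }x\in I,\ k=1,\dots,n .
\]
For $k=0$ we simply have $W[1]\equiv1\neq0$.

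\textbf{Step 2 (matching the conditions).} By the Wronskian lemma, $(1,f_1,\dots,f_n)$ is an ECT-system on $I$ if and only if $W[1,f_1,\dots,f_k]$ has no zero on $I$ for $k=0,\dots,n$. The case $k=0$ holds automatically. Likewise, $(f_1',\dots,f_n')$ is an ECT-system if and only if the leading Wronskians $W[f_1',\dots,f_k']$, which involve $k$ functions, have no zero on $I$ for $k=1,\dots,n$. By Step 1 these two lists of conditions coincide term by term, which proves both implications. The functions involved are analytic because the $f_i$ are, so the lemma applies to both families.

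\textbf{Main obstacle.} The only delicate point is bookkeeping: the family $(f_1',\dots,f_n')$ has $n$ members rather than $n+1$, so its ECT condition has to be indexed from its own first element. Once that is done, the $k$-th condition for the derivative family corresponds exactly to the $(k+1)$-st condition for $(1,f_1,\dots,f_n)$.

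If one prefers to avoid the Wronskian criterion, there is a backup argument using Rolle's theorem with multiplicities. A combination $\alpha_0+\sum_{i\le k}\alpha_i f_i$ with $N$ zeros counted with multiplicity has a derivative $\sum_{i\le k}\alpha_i f_i'$ with at least $N-1$ zeros. This gives the direction from the derivative family to $(1,f_1,\dots,f_n)$. For the converse, one would integrate $\sum_{i\le k}\alpha_i f_i'$ and choose the constant $\alpha_0$ so as to produce too many zeros. That converse needs the realization property of ECT-systems, so I would rely on the Wronskian route as the main proof.
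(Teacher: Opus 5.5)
Your argument is correct: expanding along the first column gives $W[1,f_1,\dots,f_k]=W[f_1',\dots,f_k']$. Combined with the Wronskian characterization of ECT-systems stated just before this lemma, the two lists of nonvanishing conditions then coincide index by index. The paper gives no proof of its own and only cites Krusemeyer, whose note rests on this same Wronskian identity, so your route is essentially the intended one.
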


\begin{lemma}(\cite{Kru})\label{chebyshev2}
      Let $\eta$ be a smooth non-vanishing function on an interval $I$. 
      Then the set of ordered functions $(\eta f_{0},\eta f_{1},\cdots,\eta f_{n})$ is an ECT-system on $I$ if and only if $(f_{0},f_{1},\cdots,f_{n})$ is an ECT-system on $I$.
\end{lemma}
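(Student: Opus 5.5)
The plan is to argue directly from the definition of an ECT-system, using the fact that multiplication by $\eta$ does not change zeros or their multiplicities. The Wronskian criterion gives a second, independent route, which I would record as a check.

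\emph{Direct route.} Fix $k\in\{0,1,\dots,n\}$ and real numbers $\alpha_0,\dots,\alpha_k$, and set $g=\sum_{i=0}^{k}\alpha_i f_i$. Then
\[
\sum_{i=0}^{k}\alpha_i\,\eta f_i=\eta\, g .
\]
Since $\eta$ never vanishes, $\eta g\equiv 0$ on $I$ exactly when $g\equiv 0$. So nontrivial linear combinations of the two families correspond to each other, and $\eta g$ and $g$ have the same zero set. It remains to check that multiplicities agree. Suppose $g(x_0)=g'(x_0)=\cdots=g^{(r-1)}(x_0)=0$ and $g^{(r)}(x_0)\neq 0$. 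The Leibniz formula
\[
(\eta g)^{(j)}=\sum_{l=0}^{j}\binom{j}{l}\eta^{(j-l)}g^{(l)}
\]
gives $(\eta g)^{(j)}(x_0)=0$ for $j<r$ and $(\eta g)^{(r)}(x_0)=\eta(x_0)g^{(r)}(x_0)\neq0$. Hence $x_0$ is a zero of multiplicity exactly $r$ for both functions. The converse direction follows in the same way, applying the argument to $1/\eta$, which is also smooth and nonvanishing. Therefore, for every $k$, the bound ``at most $k$ zeros counted with multiplicity'' holds for one family if and only if it holds for the other, which is the claim.

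\emph{Wronskian route.} Put $M=\bigl(f_j^{(i)}\bigr)_{0\le i,j\le k}$. By the Leibniz formula above,
\[
\bigl((\eta f_j)^{(i)}\bigr)_{i,j}=L\,M,
\]
where $L=\bigl(\binom{i}{l}\eta^{(i-l)}\bigr)_{0\le i,l\le k}$ is lower triangular with every diagonal entry equal to $\eta$. Taking determinants,
\[
W[\eta f_0,\dots,\eta f_k]=\eta^{k+1}\,W[f_0,\dots,f_k].
\]
So the two Wronskians vanish at exactly the same points, for every $k$, and the Wronskian characterization of ECT-systems gives the equivalence immediately.

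There is no real obstacle in either route. The only point needing care is the bookkeeping of multiplicities in the direct route, and the lower-triangular factorization handles exactly that. One minor remark: the paper's definitions are stated for analytic functions, while $\eta$ is only assumed smooth. Neither argument uses analyticity, since multiplicities are defined purely through derivatives.
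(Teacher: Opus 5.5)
Your proposal is correct. The paper gives no proof of this lemma and simply cites \cite{Kru}. The standard argument behind that citation is the identity $W[\eta f_0,\dots,\eta f_k]=\eta^{k+1}W[f_0,\dots,f_k]$, fed into the Wronskian characterization of ECT-systems stated just before the lemma. So your second route is essentially that standard argument, and your factorization of the derivative matrix as $LM$, with $L$ lower triangular and $\det L=\eta^{k+1}$, is a clean proof of the identity.

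Your first route is genuinely different. It never uses Wronskians and argues directly from the definition, using only the Leibniz rule to show that $g$ and $\eta g$ have the same zeros with the same multiplicities. This buys independence from the Wronskian criterion, whose harder direction (nonvanishing Wronskians force the zero bound) is where the real work in the Chebyshev theory lies. Because the zero sets coincide outright, the same argument also proves the analogous statement for CT-systems. The Wronskian route cannot give that statement, since nonvanishing Wronskians characterize ECT-systems, not CT-systems; for example, $(1,x^3)$ is a CT-system on $(-1,1)$, yet $W[1,x^3]=3x^2$ vanishes at $0$.

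Conversely, the Wronskian route is a one-line computation once the criterion is granted. It also matches how the paper uses this lemma together with Lemma~\ref{chebyshev1}, which follows from the same criterion via $W[1,f_1,\dots,f_n]=W[f_1',\dots,f_n']$.

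One small precision: you stress that analyticity is never used. For merely smooth functions, note that the same Leibniz formula shows a zero of infinite order of $g$ is also one of $\eta g$, and conversely. So the multiplicity correspondence covers that case too.
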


The following result establishes a criterion for deriving the Chebyshev property of parameter-dependent definite integrals by analyzing the properties of Chebyshev systems of the integrand.
\begin{lemma}(\cite{GGM})\label{chebyshev3}
     Let $n \in \mathbb{N}$, $a,b \in \mathbb{R}$, $\alpha \in \mathbb{R} \setminus \mathbb{N}$, and let $g$ be a monotone and continuous function on $[a,b]$. 
     Denote by $L$ the open interval given by the connected component of the set $\{h \in \mathbb{R}\mid 1 + hg(x) > 0 \text{ for all } x \in [a,b]\}$ which contains the origin. 
     For $h \in L$, we consider the analytic functions
$$J_i(h) = \int_a^b f_i(x)(1 + hg(x))^\alpha dx,$$
where $f_0, f_1, \cdots, f_n$ are analytic on $(a,b)$ such that $(f_0, f_1, \cdots, f_n)$ is a CT-system in $(a,b)$. 
Then $(J_0, J_1, \cdots, J_n)$ is an ECT-system in $L$.
\end{lemma}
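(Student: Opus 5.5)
The plan is to prove a variation-diminishing property of the kernel $(1+hg(x))^\alpha$. Let $J=\sum_{i=0}^{k}\lambda_iJ_i$ be a nontrivial combination with $0\le k\le n$. I will show that $J$ has at most $k$ zeros in $L$, counted with multiplicity; by the definition of an ECT-system this is exactly what is required. Write
\[
J(h)=\int_a^b F(x)\,(1+hg(x))^\alpha\,dx,\qquad F=\sum_{i=0}^{k}\lambda_if_i .
\]
Since the $f_i$ are analytic and form a CT-system, $F\not\equiv0$. Moreover $F$ has at most $k$ isolated zeros in $(a,b)$, so it changes sign at $s\le k$ points $x_1<\dots<x_s$. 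It therefore suffices to prove that $J$ has at most $s$ zeros counted with multiplicity, by induction on $s$.

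\textbf{Reduction step.} Put $c=g(x_1)$. For $h\in L$ we have $1+hc>0$, and differentiating under the integral sign gives
\[
\frac{d}{dh}\Big[(1+hc)^{-\alpha}J(h)\Big]
=\frac{\alpha}{(1+hc)^{\alpha+1}}\int_a^b F(x)\,\big(g(x)-c\big)\,(1+hg(x))^{\alpha-1}\,dx .
\]
Because $g$ is monotone, $g-c$ keeps one sign on each side of $x_1$. Hence $F\cdot(g-c)$ no longer changes sign at $x_1$ and has at most $s-1$ sign changes. The new integral has the same form as $J$, with exponent $\alpha-1$, which is still not in $\mathbb N$.

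Rolle's theorem, in its version with multiplicities, says: if a function has $N$ zeros counted with multiplicity, its derivative has at least $N-1$. Multiplying $J$ by the positive factor $(1+hc)^{-\alpha}$ does not change its zeros or their multiplicities. So $\#Z(J)\le 1+\#Z(\text{new integral})$. Here $\alpha\notin\mathbb N$ guarantees that the prefactors $\alpha,\alpha-1,\dots$ produced at each step never vanish, so the new integral is a genuine function whose zeros we can count.

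\textbf{Base case.} After $s$ steps the integrand is
\[
F(x)\prod_{j=1}^{s}\big(g(x)-g(x_j)\big)\,(1+hg(x))^{\alpha-s}.
\]
This has constant sign, and $1+hg>0$ on $[a,b]$ for $h\in L$. So the integral never vanishes, provided the integrand is not identically zero. That gives $\#Z(J)\le s\le k$.

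\textbf{Main obstacle.} The delicate point is that $g$ is only assumed monotone, not strictly monotone. If $g$ is constant on an interval containing some $x_j$, two problems arise. First, $g-g(x_j)$ vanishes on a set of positive measure, so the final integrand could vanish identically. Second, the sign bookkeeping near $x_j$ becomes unclear. My plan is to treat this as follows. If $g$ is constant on a neighbourhood of $x_j$, the factor $g-g(x_j)$ still removes the sign change at $x_j$ without creating a new one. For the base case, I would argue that the product of the factors $g-g(x_j)$ vanishes on the union of the plateau intervals of $g$, but not on all of $(a,b)$ unless $g$ is constant. If $g$ is constant, the kernel is a positive constant and $J$ has no zeros at all. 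Otherwise the integrand is a nonzero function of one sign off a set where it vanishes, so its integral is nonzero, since $F$ is analytic and nonvanishing almost everywhere. If this bookkeeping becomes messy, a fallback is to approximate $g$ uniformly by strictly monotone functions and pass to the limit, using Hurwitz's theorem on the analytic functions $J$.
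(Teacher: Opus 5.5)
Your argument is correct for every nonconstant monotone $g$, and it takes a genuinely different route. The paper gives no proof of this lemma; it only cites \cite{GGM}. The standard proof of such integral criteria, which I believe is the one in \cite{GGM}, goes through the Wronskian characterization. By multilinearity and symmetrization (Andreief's identity),
\[
W[J_0,\dots,J_k](h)=\frac{c_k}{(k+1)!}\int_{[a,b]^{k+1}}\det\big(f_i(x_j)\big)\prod_{i<j}\big(g(x_j)-g(x_i)\big)\prod_{j=0}^{k}\big(1+hg(x_j)\big)^{\alpha-k}\,dx_0\cdots dx_k,
\]
where $c_k=\prod_{r=1}^{k}\alpha(\alpha-1)\cdots(\alpha-r+1)\neq0$. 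On the ordered simplex, $\det(f_i(x_j))$ has constant sign by the CT property, and the Vandermonde factor has constant weak sign because $g$ is monotone. Hence $W[J_0,\dots,J_k]\neq0$.

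Your division--derivation scheme avoids determinants altogether and proves something finer, a variation-diminishing property: $\sum\lambda_iJ_i$ has at most as many zeros, counted with multiplicity, as $\sum\lambda_if_i$ has sign changes. So you only use that combinations of $f_0,\dots,f_k$ change sign at most $k$ times, and non-strict monotonicity causes no trouble. The price is that your argument depends on the special identity $\partial_h\big[(1+hc)^{-\alpha}(1+hg)^{\alpha}\big]=\alpha(g-c)(1+hc)^{-\alpha-1}(1+hg)^{\alpha-1}$. The Wronskian route instead gives directly the nonvanishing Wronskians that the paper uses later. It also carries over unchanged to any kernel $K(x,h)$ whose $h$-derivatives form a (weak) Chebyshev system in $x$.

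Two corrections.

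First, your treatment of constant $g$ is wrong. If $g\equiv c$, then $J(h)=(1+hc)^{\alpha}\int_a^bF\,dx$, which vanishes \emph{identically} whenever $\int_a^bF\,dx=0$. For $k\geq1$ such a nontrivial $F$ always exists. So for constant $g$ the $J_i$ are proportional, and in the Wronskian sense they are not an ECT-system. The lemma genuinely needs $g$ nonconstant. This is harmless for the paper, since there $g=(1-p)F_1$ is strictly monotone under (H$_1$).

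For nonconstant $g$ your base case is right, with one precision. The zero set of $\prod_j(g-g(x_j))$ is $\bigcup_j g^{-1}(g(x_j))$: a finite union of points and intervals through the $x_j$, not all plateaus of $g$. Its complement has nonempty interior. Relatedly, run the scheme as a fixed chain of integrals built from your analytic $F$, not as an induction over arbitrary weights. For a general weight, multiplying by $g-g(x_1)$ could annihilate it on a plateau. It is exactly the final nonvanishing check for $F$ that makes the chain work.

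Second, drop the Hurwitz fallback. A uniform limit of analytic functions can have more real zeros, counted with multiplicity, than the approximants ($h^2+\epsilon\to h^2$). Approximating a constant $g$ by strictly monotone functions shows that the ECT property itself can be lost in such a limit.
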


For later use, define
\begin{equation}\label{Definition of T}
    \mathcal{T}_{i,\alpha}(y)=\int_{0}^{T} \frac{\tilde{f}^{e}_{i}(t)}{\left(1+(1-p)yF_{1}(t)\right)^{\alpha}}\, dt\text{ for all } y\in J^{'},
\end{equation}
where $\alpha\in\mathbb{R}$ and $J^{'}$ is the connected component of $\{y\in\mathbb{R}\mid 1+(1-p)yF_{1}(t)>0\text{ for all }t\in[-T,T]\}$ containing the origin.
Then we have the following lemma.
\begin{lemma}\label{964}
    Let $\alpha\in\mathbb R\setminus\mathbb Z_{\leq0}$,
$k\in\mathbb Z_{\geq0}$, and hypotheses $(H_1)$ and $(H_2)$ hold.
Then the following statements are valid.
    \begin{itemize}
        \item[(i)]  $\left(1,y^{\alpha}\mathcal{T}_{0,\alpha}(y),\cdots,y^{\alpha}\mathcal{T}_{k,\alpha}(y)\right)$ is an ECT-system on $J^{'}\cap(0,+\infty)$;
        \item[(ii)] $\left(1,(-y)^{\alpha}\mathcal{T}_{0,\alpha}(y),\cdots,(-y)^{\alpha}\mathcal{T}_{k,\alpha}(y)\right)$ is an ECT-system on $J^{'}\cap(-\infty,0)$.
    \end{itemize}
\end{lemma}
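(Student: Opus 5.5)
Both parts follow from Lemma \ref{chebyshev3} via a change of variable that turns $\mathcal{T}_{i,\alpha}$ into an integral of exactly the form treated there. We then apply Lemmas \ref{chebyshev2} and \ref{chebyshev1} to absorb the factor $y^{\alpha}$ and to adjoin the constant $1$.

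\textbf{Step 1: reduce to Lemma \ref{chebyshev3}.} By (H$_1$), $F_1'=f_1\ge0$ on $[0,T]$, so $F_1$ is monotone and continuous on $[0,T]$. The change of variable $h=1/y$ is awkward, so instead we work directly with
\[
\mathcal{T}_{i,\alpha}(y)=\int_0^T \tilde f_i^e(t)\,(1+yg(t))^{-\alpha}\,dt,\qquad g=(1-p)F_1 .
\]
Since $-\alpha\notin\mathbb N$ (because $\alpha\notin\mathbb Z_{\le0}$) and $(\tilde f^e_0,\dots,\tilde f^e_k)$ is a CT-system on $(0,T)$ by (H$_2$), Lemma \ref{chebyshev3} shows that $(\mathcal T_{0,\alpha},\dots,\mathcal T_{k,\alpha})$ is an ECT-system on the component of $\{1+yg>0 \text{ on } [0,T]\}$ containing $0$. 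Because $F_1$ is even (it is the primitive of the odd $f_1$, and $F_1(\pm T)=0$), this component coincides with $J'$. This includes the case where $J'$ is unbounded on one side.

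\textbf{Step 2: handle the factor and the constant.} Lemma \ref{chebyshev1} says $(1,\phi_0,\dots,\phi_k)$ is ECT if and only if $(\phi_0',\dots,\phi_k')$ is ECT. So it suffices to show that the derivatives $\frac{d}{dy}\bigl(y^{\alpha}\mathcal T_{i,\alpha}(y)\bigr)$ form an ECT-system on $J'\cap(0,\infty)$. Differentiating under the integral gives
\[
\frac{d}{dy}\bigl(y^\alpha(1+yg)^{-\alpha}\bigr)=\alpha y^{\alpha-1}(1+yg)^{-\alpha-1},
\]
and therefore
\[
\frac{d}{dy}\bigl(y^{\alpha}\mathcal T_{i,\alpha}\bigr)=\alpha\,y^{\alpha-1}\,\mathcal T_{i,\alpha+1}(y).
\]
Here $\alpha\neq0$ and $y^{\alpha-1}$ does not vanish for $y>0$. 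By Lemma \ref{chebyshev2}, the system $\bigl(\frac{d}{dy}(y^{\alpha}\mathcal T_{i,\alpha})\bigr)_{i=0}^{k}$ is ECT if and only if $(\mathcal T_{0,\alpha+1},\dots,\mathcal T_{k,\alpha+1})$ is ECT. The latter follows from Step 1 applied with exponent $\alpha+1$, which again satisfies $\alpha+1\notin\mathbb Z_{\le0}$. The ECT property on $J'$ restricts to the subinterval $J'\cap(0,\infty)$. This proves (i).

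\textbf{Step 3: part (ii).} The same computation with $(-y)^{\alpha}$ on $y<0$ gives
\[
\frac{d}{dy}\bigl((-y)^\alpha(1+yg)^{-\alpha}\bigr)=-\alpha(-y)^{\alpha-1}(1+yg)^{-\alpha-1},
\]
again a nonvanishing factor times $\mathcal T_{i,\alpha+1}$. Alternatively, substitute $z=-y$ and $g\mapsto -g$, which is still monotone.

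\textbf{Main obstacle.} The step needing care is the derivative identity in Step 2. Combining the terms gives
\[
\alpha y^{\alpha-1}(1+yg)^{-\alpha}-\alpha y^{\alpha}g(1+yg)^{-\alpha-1}=\alpha y^{\alpha-1}(1+yg)^{-\alpha-1}\bigl[(1+yg)-yg\bigr],
\]
and the bracket equals $1$, which confirms the identity. Differentiation under the integral sign is justified on compact subsets of $J'$, where $1+yg$ is bounded away from $0$. A further point to check is that Lemma \ref{chebyshev3} requires a CT-system on the open interval $(a,b)=(0,T)$ with analytic $\tilde f_i^e$, and (H$_2$) supplies exactly this.
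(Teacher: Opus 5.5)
Your proposal is correct and follows essentially the same route as the paper: both compute $\bigl(y^{\alpha}\mathcal T_{i,\alpha}\bigr)'=\alpha y^{\alpha-1}\mathcal T_{i,\alpha+1}$, apply Lemma \ref{chebyshev3} with $g=(1-p)F_1$ and exponent $-(\alpha+1)\notin\mathbb N$, and conclude with Lemmas \ref{chebyshev2} and \ref{chebyshev1}. You also make explicit two points the paper leaves implicit: the evenness of $F_1$, which shows that $J'$ coincides with the interval $L$ of Lemma \ref{chebyshev3} taken over $[0,T]$, and the justification for differentiating under the integral sign.
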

\begin{proof}
We only prove statement (i), and the other one follows by the same argument.
For any $y\in J^{'}\cap(0,+\infty)$ and $i\in\mathbb{Z}_{\geq 0}$, a direct computation yields
    \begin{equation*}
        \begin{split}
           (y^{\alpha}\mathcal{T}_{i,\alpha}(y))'=&\alpha y^{\alpha-1}\mathcal{T}_{i,\alpha}(y)+\alpha y^{\alpha}\int_{0}^{T}\frac{(p-1)\tilde{f}^{e}_{i}(t)F_{1}(t)}{\left(1+(1-p)yF_{1}(t)\right)^{\alpha+1}}dt\\
                =&\int_{0}^{T}\frac{\alpha y^{\alpha-1}\tilde{f}^{e}_{i}(t)(1+(1-p)yF_{1}(t))+\alpha y^{\alpha}(p-1)\tilde{f}^{e}_{i}(t)F_{1}(t)}{\left(1+(1-p)yF_{1}(t)\right)^{\alpha+1}}dt\\
                =&\alpha y^{\alpha-1}\int_{0}^{T}\frac{\tilde{f}^{e}_{i}(t)}{\left(1+(1-p)yF_{1}(t)\right)^{\alpha+1}}dt\\
                =&\alpha y^{\alpha-1}\mathcal{T}_{i,\alpha+1}(y).
        \end{split}
\end{equation*}
Hypothesis (H$_{1}$) implies that $F_{1}(t)$ is monotone on $[0,T]$, while hypothesis (H$_{2}$) gives the required CT-property of the integrands. 
By applying Lemma \ref{chebyshev3} with $h=y$, $g(t)=(1-p)F_1(t)$, and exponent $-(\alpha+1)\notin\mathbb N$, we conclude that $(\mathcal{T}_{0,\alpha+1}(y),\cdots,\mathcal{T}_{k,\alpha+1}(y))$ is an ECT-system on $J'\cap(0,+\infty)$.
The result then follows from Lemmas \ref{chebyshev1} and \ref{chebyshev2}.
\end{proof}

The following proposition establishes the Chebyshev property of the functions appearing in the Melnikov functions, 
which is the main ingredient in deriving the estimates in Theorems \ref{thm1} and \ref{thm2}.
\begin{proposition}\label{chebyshev4}
    Let $x_{0}(t,\rho)$ be given in \eqref{x0(t)}. Suppose that (H$_{1}$) and (H$_{2}$) hold. Then for any $k\in\mathbb{N}$ the following two ordered sets of functions
    \begin{equation}\label{chebyshevfunctions1}
        \left(1,
\int_{0}^{T} \tilde{f}^{e}_{0}(t) x_{0}^{q-p}(t,\rho)\, dt,
\cdots,
\int_{0}^{T} \tilde{f}^{e}_{k}(t) x_{0}^{q-p}(t,\rho)\, dt
\right),
    \end{equation}
and
        \begin{equation}\label{chebyshevfunctions2}
        \left(1,
\int_{0}^{T} \tilde{f}^{e}_{0}(t) x_{0}^{q-p}(t,\rho)\, dt,
\int_{0}^{T} \tilde{F}^{e}_{1}(t)\tilde{f}^{o}_{1}(t) x_{0}^{q-1}(t,\rho)\, dt,
\cdots,
\int_{0}^{T} \tilde{F}^{e}_{k}(t)\tilde{f}^{o}_{1}(t) x_{0}^{q-1}(t,\rho)\, dt
\right),
    \end{equation}
form two ECT-systems on any connected component of the set $J\cap(0,+\infty)$ and $J\cap(-\infty,0)$, where $J$ is defined by \eqref{J}.
\end{proposition}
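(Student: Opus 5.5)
The plan is to reduce both families to the functions $y^{\alpha}\mathcal{T}_{i,\alpha}(y)$ of Lemma \ref{964} via the substitution $y=\rho^{p-1}$. The second family \eqref{chebyshevfunctions2} should then follow from the first by an integration by parts.

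\emph{Step 1 (first family).} By \eqref{x0(t)},
\[
x_0^{q-p}(t,\rho)=\rho^{q-p}\bigl(1+(1-p)\rho^{p-1}F_1(t)\bigr)^{-\alpha},\qquad \alpha:=\frac{q-p}{p-1}.
\]
Hence $\int_0^T\tilde f^e_i x_0^{q-p}\,dt=\rho^{q-p}\mathcal{T}_{i,\alpha}(\rho^{p-1})$.

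We first check that $\alpha\notin\mathbb Z_{\le0}$. We have $\alpha\neq0$ since $q\neq p$. If $\alpha=-k$ with $k\geq1$, then $q=p-k(p-1)\le1$, which contradicts $q\ge2$. So Lemma \ref{964} applies.

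On $\rho>0$ set $y=\rho^{p-1}>0$. Then $\rho^{q-p}=y^{\alpha}$, so the family equals $(1,y^\alpha\mathcal T_{0,\alpha},\dots,y^\alpha\mathcal T_{k,\alpha})$ composed with $\rho\mapsto\rho^{p-1}$. This map is an analytic diffeomorphism of $(0,\infty)$ onto itself. Composition with such a map preserves zeros and their multiplicities, so it preserves the ECT property. The connected components of $J\cap(0,\infty)$ are mapped onto the corresponding pieces of $J'\cap(0,\infty)$, and Lemma \ref{964}(i) gives the claim there.

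On $\rho<0$ there are two cases:
\begin{itemize}
\item \emph{$p$ odd.} Then $y=\rho^{p-1}=|\rho|^{p-1}>0$, and $\rho^{q-p}=\pm|\rho|^{q-p}=\pm y^{\alpha}$ with a fixed sign. Multiplying all non-constant members of the family by the same nonzero constant does not affect the ECT property (the linear span is unchanged, and so are the nested spans). Lemma \ref{964}(i) again applies, now composed with the diffeomorphism $\rho\mapsto|\rho|^{p-1}$ of $(-\infty,0)$.
\item \emph{$p$ even.} Then $y=\rho^{p-1}<0$ and $|\rho|^{q-p}=(-y)^{\alpha}$, up to a fixed sign. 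Here we use Lemma \ref{964}(ii).
\end{itemize}

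\emph{Step 2 (second family).} By (H$_1$), $f_1$ is odd with $f_1=\tilde f^o_1$. A direct differentiation gives
\[
\frac{d}{dt}\bigl(1+(1-p)yF_1\bigr)^{-\alpha}=\alpha(p-1)y\,f_1\bigl(1+(1-p)yF_1\bigr)^{-\alpha-1},
\]
and $-\alpha-1=-\frac{q-1}{p-1}$. Therefore
\[
\tilde f^o_1\,x_0^{q-1}=\frac{1}{q-p}\,\frac{\partial}{\partial t}\bigl(x_0^{q-p}\bigr).
\]
Integrating by parts on $[0,T]$ gives
\[
\int_0^T\tilde F^e_j\tilde f^o_1x_0^{q-1}\,dt=\frac{1}{q-p}\Bigl[\tilde F^e_j x_0^{q-p}\Bigr]_0^T-\frac1{q-p}\int_0^T\tilde f^e_j x_0^{q-p}\,dt .
\]
The boundary term vanishes:
\begin{itemize}
\item $\tilde F^e_j(0)=0$, because $\tilde F^e_j$ is odd.
\item $\tilde F^e_j(T)=\int_{-T}^Tf^e_{i}=\int_{-T}^T f_{i}=0$ for the original index $i\ge1$, by \eqref{eq:integral_zero}.
\item $x_0(T,\rho)=\rho$ is finite, so $x_0^{q-p}(T,\rho)$ is finite.
\end{itemize}
Hence each entry of \eqref{chebyshevfunctions2} with $j\ge1$ equals $-\frac1{q-p}$ times the corresponding entry of \eqref{chebyshevfunctions1}. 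Scaling members by nonzero constants preserves the ECT property, so \eqref{chebyshevfunctions2} is an ECT-system whenever \eqref{chebyshevfunctions1} is.

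\emph{Expected obstacle.} The delicate points are bookkeeping rather than ideas. First, the signs and the choice between (i) and (ii) of Lemma \ref{964} on $\rho<0$ according to the parity of $p$. Second, identifying the components of $J\cap(\pm\infty)$ with those of $J'$ under $y=\rho^{p-1}$. Third, confirming that the integration by parts is legitimate: the integrand is analytic on $[0,T]$ for $\rho\in J$, and the boundary values vanish as used above. I would check the component correspondence first, since the other two are direct computations.
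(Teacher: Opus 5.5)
Your proposal is correct and follows essentially the same route as the paper. The substitution $y=\rho^{p-1}$ reduces the first family to Lemma~\ref{964}, using part (i) or (ii) according to the parity of $p$ and up to the constant sign $(-1)^{q-p}$, and the second family then follows from the integration-by-parts identity \eqref{dengs}, $-(q-p)\int_0^T\tilde F^e_j\tilde f^o_1x_0^{q-1}\,dt=\int_0^T\tilde f^e_jx_0^{q-p}\,dt$. Your extra checks supply details the paper leaves implicit: that $\alpha\notin\mathbb{Z}_{\leq 0}$, that the boundary terms vanish because $\tilde F^e_j(0)=\tilde F^e_j(T)=0$, and that composing with $\rho\mapsto\rho^{p-1}$ preserves multiplicities.
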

\begin{proof}
Set $\alpha=\frac{q-p}{p-1}$.
Consider the change of variables $y=\rho^{p-1}$.
If \(\rho>0\), then
\[
y^{\alpha}\mathcal T_{i,\alpha}(y)
\big|_{y=\rho^{p-1}}
=
\int_0^T\tilde f_i^e(t)
x_0^{q-p}(t,\rho)\,dt.
\]
If \(p\) is odd and \(\rho<0\), then \(y=\rho^{p-1}>0\) and
\[
y^{\alpha}\mathcal T_{i,\alpha}(y)
\big|_{y=\rho^{p-1}}
=
(-1)^{q-p}
\int_0^T\tilde f_i^e(t)
x_0^{q-p}(t,\rho)\,dt.
\]
If \(p\) is even and \(\rho<0\), then
\[
(-y)^{\alpha}\mathcal T_{i,\alpha}(y)
\big|_{y=\rho^{p-1}}
=
(-1)^{q-p}
\int_0^T\tilde f_i^e(t)
x_0^{q-p}(t,\rho)\,dt.
\]
By Lemma \ref{964} and the fact that $(-1)^{q-p}$ is constant, \eqref{chebyshevfunctions1} forms an ECT-system on any connected component of the set $J\cap(0,+\infty)$ and $J\cap(-\infty,0)$.
Then, the fact that \eqref{chebyshevfunctions2} forms an ECT-system follows immediately from the identity for $i\geq 1$
\begin{equation}\label{dengs}
    -(q-p)\int_{0}^{T}\tilde{F}^{e}_{i}(t)\tilde{f}^{o}_{1}(t)x_{0}^{q-1}(t,\rho)dt=\int_{0}^{T} x_{0}^{q-p}(t,\rho)d\tilde{F}^{e}_{i}(t)=\int_{0}^{T} \tilde{f}^{e}_{i}(t) x_{0}^{q-p}(t,\rho)dt.
\end{equation}
\end{proof}

\section{Proof of the main results}
This section is devoted to the proof of Theorems \ref{thm1}, \ref{thm2} and \ref{thm3}.
The main idea is to express the Melnikov functions as linear combinations of functions belonging to suitable ECT-systems and then apply their zero-counting properties.
In the first part we  provide the explicit expression for the Melnikov functions of the perturbed equation \eqref{perturbation equation}.
The remaining part contains the proofs of the three main results.

\subsection{Melnikov functions for the differential equation \eqref{perturbation equation}}
We first derive the explicit expressions of $M_{1}$ and $M_{2}$, and identify the function spaces to which they belong.
\begin{proposition}
    Suppose that (H$_{1}$) and (H$_{2}$) hold. Then the following statements hold for the perturbed equation \eqref{perturbation equation}:
\begin{itemize}
    \item[(i)] The first-order Melnikov function is given by 
    \begin{equation*}
            M_{1}(\rho;\mu)=a_{10}\int_{-T}^{T}f_{0}(t)\,dt\rho^{p}+\sum_{k\in\mathcal{N}_{m}^{\text{non-odd}}}\left(2b_{1k}\rho^{p}\int_{0}^{T}f^{e}_{k}(t)x_{0}(t,\rho)^{q-p}dt\right).
    \end{equation*}
    Moreover, $M_{1}(\rho;\mu)\equiv 0$ if and only if $a_{10}=b_{1k}=0$ for all $k\in\mathcal{N}_{m}^{\text{non-odd}}$.
    \item[(ii)] If $M_{1}(\rho;\mu)\equiv 0$, then the second-order Melnikov function is given by $M_{2}(\rho;\mu)=M_{21}(\rho;\mu)+(q-p)M_{22}(\rho;\mu)$, where
    \begin{equation*}
        \begin{split}
            M_{21}(\rho;\mu)=&a_{20}\int_{-T}^{T}f_{0}(t)\,dt\rho^{p}+2b_{20}\rho^{p}\int_{0}^{T}f^{e}_{0}(t)x_{0}^{q-p}(t,\rho)dt\\
            &-(q-p)\sum_{i\in\mathcal{N}_{m}^{\text{non-odd}}\setminus\{0\}}\left(2b_{2i}\rho^{p}\int_{0}^{T}F^{e}_{i}(t)f_{1}(t)x_{0}^{q-1}(t,\rho)dt\right).
        \end{split}
    \end{equation*}
    and
    \begin{equation*}
        \begin{split}
            M_{22}(\rho;\mu)=&\sum_{\substack{l\in\mathcal{N}^{\text{non-odd}}_{n}\setminus\{0\}\\ k\in\mathcal{N}^{\text{odd}}_{m}}}2a_{1l}b_{1k}\rho^{p}\int_{0}^{T}F^{e}_{l}(t)f^{o}_{k}(t)x_{0}(t,\rho)^{q-1}dt.
        \end{split}
    \end{equation*}
\end{itemize}
\end{proposition}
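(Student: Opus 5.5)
The plan is to exploit the parity structure forced by (H$_{1}$) and then read off $M_{1}$ and $M_{2}$ from Lemma~\ref{Melnikovfunction}. Since $f_{1}$ is odd, $F_{1}(t)=\int_{-T}^{t}f_{1}$ is even: for an odd $g$, $G(t)=\int_{-T}^{t}g$ satisfies $G(-t)=G(t)-\int_{-T}^{T}g=G(t)$. By \eqref{x0(t)}, $x_{0}(t,\rho)$ is therefore even in $t$. Consequently, for any $g$,
\[
\int_{-T}^{T}g(t)\,x_{0}^{r}(t,\rho)\,dt=2\int_{0}^{T}g^{e}(t)\,x_{0}^{r}(t,\rho)\,dt ,
\]
because the odd part integrates to zero against an even function. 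I will use this identity repeatedly, together with \eqref{eq:integral_zero}.

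\emph{Statement (i).} Insert $P_{1}=\sum_{j}a_{1j}f_{j}$ and $Q_{1}=\sum_{k}b_{1k}f_{k}$ into the formula for $M_{1}$ in Lemma~\ref{Melnikovfunction}.
\begin{itemize}
\item By \eqref{eq:integral_zero}, only the term $a_{10}\int_{-T}^{T}f_{0}\,dt$ survives from $P_{1}$.
\item By the parity identity, each $Q_{1}$-term becomes $2b_{1k}\int_{0}^{T}f^{e}_{k}x_{0}^{q-p}\,dt$.
\item These terms vanish exactly when $k\in\mathcal N_{m}^{\text{odd}}$, which gives the stated formula.
\end{itemize}
For the characterization of $M_{1}\equiv0$, divide by $\rho^{p}$ on a component of $J\cap(0,+\infty)$. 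By Proposition~\ref{chebyshev4}, the ordered set \eqref{chebyshevfunctions1} is an ECT-system there, hence linearly independent. Since $\int_{-T}^{T}f_{0}\,dt\neq0$ (because $f_{0}>0$), the identity $M_{1}\equiv0$ forces $a_{10}=0$ and $b_{1k}=0$ for all non-odd $k$. The converse is immediate.

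\emph{Statement (ii).} Assume $M_{1}\equiv0$. Then $P_{1}=\sum_{l\ge1}a_{1l}f_{l}$, and $Q_{1}=\sum_{k\in\mathcal N_{m}^{\text{odd}}}b_{1k}f^{o}_{k}$ is odd.

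The first part of the $M_{2}$ formula is handled exactly as in (i), giving the $a_{20}$ and $b_{20}$ terms. For each non-odd $i\ge1$ I integrate by parts on $[0,T]$, as in \eqref{dengs}:
\begin{itemize}
\item Since $x_{0}'=f_{1}x_{0}^{p}$, we have $d\,x_{0}^{q-p}=(q-p)f_{1}x_{0}^{q-1}\,dt$.
\item The boundary terms vanish: $F^{e}_{i}(T)=\int_{-T}^{T}f_{i}=0$, and $F^{e}_{i}(0)=0$ because $F^{e}_{i}$ is odd.
\end{itemize}
This yields $M_{21}$.

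For the cross term $\int_{-T}^{T}Q_{1}Sx_{0}^{q-1}\,dt$, note that $Q_{1}x_{0}^{q-1}$ is odd. So only the odd part of $S$ contributes. Split
\[
S=\sum_{l\ge1}a_{1l}\bigl(F^{e}_{l}+F^{o}_{l}\bigr)+\int_{-T}^{t}Q_{1}x_{0}^{q-p}\,ds .
\]
In this decomposition:
\begin{itemize}
\item $F^{o}_{l}$ is even;
\item the last primitive is even, being the primitive from $-T$ of an odd function;
\item $F^{e}_{l}$ is odd, and it vanishes identically when $l$ is an odd index.
\end{itemize}
Hence only the terms $a_{1l}F^{e}_{l}$ with $l\in\mathcal N^{\text{non-odd}}_{n}\setminus\{0\}$ survive. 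The parity identity then converts the integral over $[-T,T]$ into twice the integral over $[0,T]$, which is $M_{22}$.

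The main obstacle is this parity bookkeeping in the cross term: I must verify carefully that every primitive taken from $-T$ has the claimed parity, using \eqref{eq:integral_zero} where needed. A secondary point is that the independence argument in (i) requires working on a single connected component of $J\setminus\{0\}$, which Proposition~\ref{chebyshev4} permits.
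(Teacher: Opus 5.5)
Your proposal is correct and follows essentially the same route as the paper. You expand Lemma~\ref{Melnikovfunction}, use the evenness of $x_0$ in $t$ together with the zero-mean normalization \eqref{eq:integral_zero} to keep only even parts, invoke Proposition~\ref{chebyshev4} for the characterization of $M_1\equiv0$, convert the $b_{2i}$ terms by the integration by parts \eqref{dengs}, and use parity to discard the $F^o_l$ and $\int_{-T}^{t}Q_1x_0^{q-p}\,ds$ contributions in the cross term. Your explicit checks are, if anything, slightly more careful than the paper's write-up: you verify the boundary terms $F^e_i(0)=F^e_i(T)=0$, and you include the constant $1$ in the ECT family so that $a_{10}$ is also forced to vanish.
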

\begin{proof}
    (i) The expression of $M_{1}$ follows from Lemma \ref{Melnikovfunction} and we have 
\begin{equation}\label{M1}
    \begin{split}
        M_{1}(\rho;\mu)
        &=\rho^{p}\int_{-T}^{T}P_{1}(t)+Q_{1}(t)x_{0}(t,\rho)^{q-p}dt\\
        &=\int_{-T}^{T}\left(\sum_{k=0}^{n}a_{1k}f_{k}(t)\right)dt\rho^{p}+\int_{-T}^{T}\left(\sum_{k=0}^{m}b_{1k}\rho^{p}\left(f^{e}_{k}(t)+f^{o}_{k}(t)\right)x_{0}(t,\rho)^{q-p}\right)dt\\
        &=a_{10}\int_{-T}^{T}f_{0}(t)\,dt\rho^{p}+\sum_{k=0}^{m}\left(b_{1k}\rho^{p}\int_{-T}^{T}f^{e}_{k}(t)x_{0}(t,\rho)^{q-p}\,dt\right)\\
        &=a_{10}\int_{-T}^{T}f_{0}(t)\,dt\rho^{p}+\sum_{k\in\mathcal{N}_{m}^{\text{non-odd}}}\left(2b_{1k}\rho^{p}\int_{0}^{T}f^{e}_{k}(t)x_{0}(t,\rho)^{q-p}\,dt\right),
    \end{split}
\end{equation}
where we have used the fact that $\int_{-T}^{T} f_{k}(t) dt = 0$ for all $k\geq 1$ and the solution $x_0(t, \rho)$ is even in $t$.
The second assertion in (i) follows by Proposition \ref{chebyshev4}, which shows that 
$\left(\int_{0}^{T} f^{e}_{k}(t) x_{0}^{q-p}(t,\rho)\, dt\right)_{k\in\mathcal{N}_{m}^{\text{non-odd}}}$ is an ECT-system in $J\cap(0,+\infty)$ and $J\cap(-\infty,0)$.

    (ii) 
Note that $M_{1}(\rho;\mu)\equiv0$ if and only if $a_{10}=b_{1k}=0, k\in\mathcal{N}_{m}^{\text{non-odd}}$.
According to Lemma \ref{Melnikovfunction}, we have
\begin{equation*}
    \begin{split}
        M_{2}(\rho;\mu)&=\rho^{p}\int_{-T}^{T}P_{2}(t)+Q_{2}(t)x_{0}(t,\rho)^{q-p}+(q-p)Q_{1}(t)S(t,\rho)x_{0}(t,\rho)^{q-1}dt\\
                &:=M_{21}+(q-p)M_{22}
    \end{split}
\end{equation*}
where
\begin{equation*}
        S(t,\rho)=\int_{-T}^{t}\left(P_{1}(s)+Q_{1}(s)x_{0}(s,\rho)^{q-p}\right)ds,
\end{equation*}
\begin{equation}\label{M21}
    \begin{split}
        M_{21}=&\rho^{p}\int_{-T}^{T}P_{2}(t)+Q_{2}(t)x_{0}(t,\rho)^{q-p}\,dt\\
        =&\sum_{k=0}^{n}a_{2k}\int_{-T}^{T}f_{k}(t)dt\rho^{p}+\sum_{k=0}^{m}\left(2b_{2k}\rho^{p}\int_{0}^{T}f^{e}_{k}(t)x_{0}^{q-p}(t,\rho)dt\right)\\
            =&a_{20}\int_{-T}^{T}f_{0}(t)\,dt\rho^{p}+\sum_{i\in\mathcal{N}_{m}^{\text{non-odd}}}\left(2b_{2i}\rho^{p}\int_{0}^{T}f^{e}_{i}(t)x_{0}^{q-p}(t,\rho)dt\right)\\
            =&a_{20}\int_{-T}^{T}f_{0}(t)\,dt\rho^{p}+2b_{20}\rho^{p}\int_{0}^{T}f^{e}_{0}(t)x_{0}^{q-p}(t,\rho)dt\\
            &-(q-p)\sum_{i\in\mathcal{N}_{m}^{\text{non-odd}}\setminus\{0\}}\left(2b_{2i}\rho^{p}\int_{0}^{T}F^{e}_{i}(t)f^{o}_{1}(t)x_{0}^{q-1}(t,\rho)dt\right),
            \end{split}
\end{equation}
and
\begin{equation}\label{M22}
    \begin{split}
        M_{22}&=\rho^{p}\int_{-T}^{T}Q_{1}(t)S(t,\rho)x_{0}(t,\rho)^{q-1}dt\\
            &=\rho^{p}\int_{-T}^{T}\left(\sum_{k\in\mathcal{N}^{\text{odd}}_{m}}b_{1k}f_{k}(t)\right)\left(\int_{-T}^{t}\left(P_{1}(s)+Q_{1}(s)x_{0}(s,\rho)^{q-p}\right)ds\right) x_{0}(t,\rho)^{q-1}dt\\
            &=\rho^{p}\int_{-T}^{T}\left(\sum_{k\in\mathcal{N}^{\text{odd}}_{m}}b_{1k}f_{k}(t)\right)\left(\int_{-T}^{t}P_{1}(s)ds\right) x_{0}(t,\rho)^{q-1}dt\\
            &=\rho^{p}\int_{-T}^{T}\left(\sum_{k\in\mathcal{N}^{\text{odd}}_{m}}b_{1k}f_{k}(t)\right)\left(\sum_{l=1}^{n}a_{1l}\int_{-T}^{t}f_{l}(s)ds\right) x_{0}(t,\rho)^{q-1}dt\\
            &=\rho^{p}\int_{-T}^{T}\left(\sum_{k\in\mathcal{N}^{\text{odd}}_{m}}b_{1k}f^{o}_{k}(t)\right)\left(\sum_{l=1}^{n}a_{1l}\int_{-T}^{t}f^{e}_{l}(s)ds\right) x_{0}(t,\rho)^{q-1}dt\\
            &=\rho^{p}\int_{-T}^{T}\left(\sum_{k\in\mathcal{N}^{\text{odd}}_{m}}b_{1k}f^{o}_{k}(t)\right)\left(\sum_{l\in\mathcal{N}^{\text{non-odd}}_{n}\setminus\{0\}}a_{1l}\int_{-T}^{t}f^{e}_{l}(s)ds\right) x_{0}(t,\rho)^{q-1}dt\\
            &=\sum_{\substack{l\in\mathcal{N}^{\text{non-odd}}_{n}\setminus\{0\} \\ k\in\mathcal{N}^{\text{odd}}_{m}}}a_{1l}b_{1k}\rho^{p}\int_{-T}^{T}F^{e}_{l}(t)f^{o}_{k}(t)x_{0}(t,\rho)^{q-1}dt\\
            &=\sum_{\substack{l\in\mathcal{N}^{\text{non-odd}}_{n}\setminus\{0\} \\ k\in\mathcal{N}^{\text{odd}}_{m}}}2a_{1l}b_{1k}\rho^{p}\int_{0}^{T}F^{e}_{l}(t)f^{o}_{k}(t)x_{0}(t,\rho)^{q-1}dt.
    \end{split}
\end{equation}
\end{proof}
\begin{remark}\label{expression of MF}
    For the zero estimates below, it is convenient to rewrite the expressions of the Melnikov functions obtained in the above proposition using the setting in Section \ref{setting}:
        \begin{equation}
        \begin{split}
            M_{1}(\rho;\mu):=&a_{10}\int_{-T}^{T}f_{0}(t)\,dt\rho^{p}+\sum_{k=0}^{m_{1}}\left(2\tilde{b}_{1,k}\rho^{p}\int_{0}^{T}\tilde{f}^{e}_{k}(t)x_{0}(t,\rho)^{q-p}dt\right),\\
            M_{21}(\rho;\mu):=&a_{20}\int_{-T}^{T}f_{0}(t)\,dt\rho^{p}+2\tilde{b}_{20}\rho^{p}\int_{0}^{T}\tilde{f}^{e}_{0}(t)x_{0}(t,\rho)^{q-p}dt\\
            &-(q-p)\sum_{i=1}^{m_{1}}\left(2\tilde{b}_{2i}\rho^{p}\int_{0}^{T}\tilde{F}^{e}_{i}(t)\tilde{f}^{o}_{1}(t)x_{0}^{q-1}(t,\rho)dt\right),\\
            M_{22}(\rho;\mu):=&\sum_{\substack{1\leq k\leq m-m_{1} \\ 1\leq l\leq n_{1}}}2\tilde{a}_{1,l}\tilde{b}_{1,k}\rho^{p}\int_{0}^{T}\tilde{F}^{e}_{l}(t)\tilde{f}^{o}_{k}(t)x_{0}(t,\rho)^{q-1}dt.
        \end{split}
    \end{equation}
    Moreover, $M_{1}(\rho;\mu)\equiv 0$ if and only if $a_{10}=\tilde{b}_{1,k}=0$ for all $k=0,1,\cdots,m_{1}$.
\end{remark}

The next proposition shows that, under (H$_{3}$), the second-order Melnikov function also belongs to a finite-dimensional space with controlled Chebyshev structure.
\begin{proposition}\label{proposition1}
    Suppose that (H$_{1}$), (H$_{2}$) and (H$_{3}$) hold.
    Setting $K=\max\{m-m_{1}+n_{1}, m_{1}+1\}$ and $\mathcal{A}=\{\mu\in\mathbb{R}^{2(n+m+2)}\mid M_{1}(\rho;\mu)=0 \text{ for all }\rho\in J\}$, 
    there exists a surjective map $\beta:\mathcal{A}\rightarrow\mathbb{R}^{K+1}$ such that
    \begin{equation*}
        \begin{split}
        M_{2}(\rho;\mu)=&\beta_{1}(\mu)\rho^{p}+\beta_{2}(\mu)\rho^{p}\int_{0}^{T}\tilde{f}^{e}_{0}(t)x_{0}(t,\rho)^{q-p}\,dt\\
        &+\sum_{i=1}^{K-1}\beta_{i+2}(\mu)\rho^{p}\int_{0}^{T}\tilde{F}^{e}_{i}(t)\tilde{f}^{o}_{1}(t)x_{0}(t,\rho)^{q-1}\,dt.
        \end{split}
    \end{equation*}
\end{proposition}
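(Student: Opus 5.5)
The plan is to start from the decomposition $M_2=M_{21}+(q-p)M_{22}$ in Remark \ref{expression of MF}, restricted to $\mu\in\mathcal A$. By that remark, $\mu\in\mathcal A$ exactly when $a_{10}=\tilde b_{1,k}=0$ for $k=0,\dots,m_1$. All other coordinates of $\mu$ are free: $\mathbf a_2$, $\mathbf b_2$, the $\tilde a_{1,l}$, and the odd coefficients $\tilde b_{1,k}$ for $1\le k\le m-m_1$.

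\textbf{Step 1: rewrite $M_{22}$ in the basis of $\mathcal B_r$.} For each pair $(k,l)$ with $1\le k\le m-m_1$ and $1\le l\le n_1$, hypothesis (H$_3$) gives
\[
\tilde F^e_l\tilde f^o_k=\sum_{i=1}^{k+l-1}c^{kl}_i\,\tilde F^e_i\tilde f^o_1 ,\qquad c^{kl}_{k+l-1}\neq0 .
\]
The largest index that occurs is $(m-m_1)+n_1-1\le K-1$. Also $m_1\le K-1$, so the $\tilde F^e_i\tilde f^o_1$ terms of $M_{21}$ fit into the same range. Substituting the expansion into $M_{22}$ and collecting terms produces the claimed representation with
\[
\beta_1=a_{20}\int_{-T}^Tf_0\,dt,\qquad \beta_2=2\tilde b_{20},
\]
\[
\beta_{i+2}=-2(q-p)\tilde b_{2i}\,\mathbf 1_{\{i\le m_1\}}+2(q-p)\sum_{k,l}\tilde a_{1,l}\tilde b_{1,k}\,c^{kl}_i ,\qquad 1\le i\le K-1,
\]
where $c^{kl}_i:=0$ for $i>k+l-1$.

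\textbf{Step 2: surjectivity.} The coordinates $\beta_1$ and $\beta_2$ are hit freely through $a_{20}$ and $\tilde b_{20}$, using $\int f_0>0$. For $i\le m_1$, once the bilinear part is fixed, $\beta_{i+2}$ can be adjusted arbitrarily through $\tilde b_{2i}$, since $q\neq p$. If $K=m_1+1$ this finishes the proof. Otherwise $N:=m-m_1+n_1>m_1+1$, and the bilinear part alone must realize arbitrary values of $\beta_{i+2}$ for $m_1+1\le i\le N-1$.

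This is where the inequality in (H$_3$) enters. Suppose first $m_1\ge m-m_1-1$. Set $\tilde b_{1,m-m_1}=1$ and all other odd $\tilde b_{1,k}=0$. The bilinear part becomes linear in $(\tilde a_{1,1},\dots,\tilde a_{1,n_1})$, and it is triangular: $\tilde a_{1,l}$ contributes to indices $\le l+m-m_1-1$, with nonzero coefficient $c^{m-m_1,l}_{l+m-m_1-1}$ at the top index. As $l$ runs over $1,\dots,n_1$, the top indices run over $m-m_1,\dots,N-1$, and this range contains $m_1+1,\dots,N-1$ because $m-m_1\le m_1+1$. Back-substitution from $l=n_1$ downward therefore determines the $\tilde a_{1,l}$ matching any prescribed $\beta_{i+2}$, $i\ge m_1+1$. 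The remaining $\tilde a_{1,l}$ are set to zero, and the lower-order leftovers are absorbed by the $\tilde b_{2i}$.

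In the other case, $m_1\ge n_1-1$, the roles are swapped. Fix $\tilde a_{1,n_1}=1$ and the other $\tilde a_{1,l}=0$, then solve triangularly for $\tilde b_{1,k}$, $k=1,\dots,m-m_1$. The top indices are now $k+n_1-1$, which cover $n_1,\dots,N-1\supseteq\{m_1+1,\dots,N-1\}$.

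\textbf{Main obstacle.} The step I expect to need the most care is Step 2. The bilinear map $(\tilde a_1,\tilde b_1)\mapsto\beta$ is not linear, so surjectivity must come from freezing one factor and exploiting the triangular structure guaranteed by the condition $\notin\mathcal B_{k+l-2}$ in (H$_3$). The inequality $m_1\ge\min\{m-m_1-1,n_1-1\}$ is precisely what ensures that the top indices reached by the frozen-factor linear map cover every coordinate that $\tilde b_2$ cannot reach. I will also have to check that the tilde relabelling is compatible: the $\tilde a_{1,l}$ are the coefficients of non-odd $f_l$, and the $\tilde b_{1,k}$ those of odd $f_k$, and both families are unconstrained on $\mathcal A$.
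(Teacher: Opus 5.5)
Your proposal is correct and follows essentially the paper's argument. The paper also expands $M_{22}$ via (H$_3$), handles the case $m-m_1+n_1<m_1+1$ with the $\tilde b_{2i}$ alone, and otherwise freezes exactly the same factor as you do ($\tilde b_{1,m-m_1}=1$ when $m_1\ge m-m_1-1$, resp.\ $\tilde a_{1,n_1}=1$ when $m_1\ge n_1-1$). The difference is only in packaging: the paper encodes the triangularity coming from $\tilde F^e_l\tilde f^o_k\notin\mathcal B_{k+l-2}$ as an inductively proved span identity for $\mathcal B_{k+l-1}$, whereas you write $\beta$ in closed form and carry out the back-substitution explicitly.
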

\begin{proof}
    By Remark \ref{expression of MF}, we have that 
    \begin{equation*}
        \begin{split}
            M_{2}(\rho;\mu)=&M_{21}(\rho;\mu)+(q-p)M_{22}(\rho;\mu)\\
            =&a_{20}\int_{-T}^{T}f_{0}(t)\,dt\rho^{p}+2\tilde{b}_{20}\rho^{p}\int_{0}^{T}\tilde{f}^{e}_{0}(t)x_{0}(t,\rho)^{q-p}dt\\
            &-(q-p)\sum_{i=1}^{m_{1}}\left(2\tilde{b}_{2i}\rho^{p}\int_{0}^{T}\tilde{F}^{e}_{i}(t)\tilde{f}^{o}_{1}(t)x_{0}^{q-1}(t,\rho)dt\right)\\
            &+(q-p)\sum_{\substack{1\leq k\leq m-m_{1} \\ 1\leq l\leq n_{1}}}\left(2\tilde{a}_{1,l}\tilde{b}_{1,k}\rho^{p}\int_{0}^{T}\tilde{F}^{e}_{l}(t)\tilde{f}^{o}_{k}(t)x_{0}(t,\rho)^{q-1}dt\right).
        \end{split}
    \end{equation*}
    Under hypothesis (H$_{{3}}$), we know that 
    \begin{equation*}
        M_{2}(\rho;\mu)\in \operatorname{span}\left(\left\{\rho^{p}, \rho^{p}\int_{0}^{T}\tilde{f}^{e}_{0}(t)x_{0}(t,\rho)^{q-p}\,dt, \cdots,\rho^{p}\int_{0}^{T}\tilde{F}^{e}_{K-1}(t)\tilde{f}^{o}_{1}(t)x_{0}(t,\rho)^{q-1}\,dt\right\}\right),
    \end{equation*}
which has dimension $K+1$.
Hence there exists a unique $\hat{\beta}=\hat{\beta}(\mu)\in \mathbb{R}^{K+1}$ such that
    \begin{equation*}
        M_{2}(\rho;\mu)=\hat{\beta}_{1}\rho^{p}+\hat{\beta}_{2}\rho^{p}\int_{0}^{T}\tilde{f}^{e}_{0}(t)x_{0}(t,\rho)^{q-p}\,dt+\rho^{p}\sum_{i=1}^{K-1}\hat{\beta}_{i+2}\int_{0}^{T}\tilde{F}^{e}_{i}(t)\tilde{f}^{o}_{1}(t)x_{0}(t,\rho)^{q-1}\,dt.
    \end{equation*}
Next, we prove that $\hat{\beta}:\mathcal{A}\rightarrow \mathbb{R}^{K+1}$ is a surjective map.
It suffices to construct a set $\mathcal{C}$ such that $\mathbb{R}^{K+1}=\hat{\beta}\left(\mathcal{A}\cap\mathcal{C}\right)$.
{
Before constructing the set \(\mathcal{C}\), we claim that 
\begin{equation}\label{calim1}
    \begin{split}
\mathcal{B}_{k+l-1}
=&
\operatorname{span}
\left(
\left\{
\tilde{F}_{i}^{e}\tilde{f}_{1}^{o}
\right\}_{1\leq i\leq l-1}
\bigcup
\left\{
\tilde{F}_{l}^{e}\tilde{f}_{j}^{o}
\right\}_{1\leq j\leq k}
\right)\\
=&
\operatorname{span}
\left(
\left\{
\tilde{F}_{i}^{e}\tilde{f}_{1}^{o}
\right\}_{1\leq i\leq l-1}
\bigcup
\left\{
\tilde{F}_{j}^{e}\tilde{f}_{l}^{o}
\right\}_{1\leq j\leq k}
\right).
    \end{split}    
\end{equation}
Recall that $\mathcal{B}_{r}=\operatorname{span}\big(\big\{\tilde{F}^{e}_{1}\tilde{f}^{o}_{1},\cdots,\tilde{F}^{e}_{r}\tilde{f}^{o}_{1}\big\}\big)$.
We first prove the first equality in \eqref{calim1} by induction on $k$. 
When \(k=1\), we obtain that 
$$\mathcal{B}_{l}
=
\operatorname{span}
\left(
\left\{
    \tilde{F}_{i}^{e}\tilde{f}_{1}^{o}
\right\}_{1\leq i\leq l}
\right),$$ which is clearly true by the definition of $\mathcal{B}_{r}$.
Then we assume that the first equality in \eqref{calim1} holds for $k$ and it remains only to prove it for $k+1$, that is,
\begin{equation}\label{calim2}
\mathcal{B}_{k+l}
=
\operatorname{span}
\left(
\left\{
\tilde{F}_{i}^{e}\tilde{f}_{1}^{o}
\right\}_{1\leq i\leq l-1}
\bigcup
\left\{
\tilde{F}_{l}^{e}\tilde{f}_{j}^{o}
\right\}_{1\leq j\leq k+1}
\right).    
\end{equation}
By the first equality in \eqref{calim1}, we have 
\begin{equation}\label{calim3}
    \tilde{F}_{m}^{e}\tilde{f}_{1}^{o}\in 
\operatorname{span}
\left(
\left\{
\tilde{F}_{i}^{e}\tilde{f}_{1}^{o}
\right\}_{1\leq i\leq l-1}
\bigcup
\left\{
\tilde{F}_{l}^{e}\tilde{f}_{j}^{o}
\right\}_{1\leq j\leq k+1}
\right),\ 1\leq m\leq k+l-1.
\end{equation}
By the first part of hypothesis (H$_{3}$), we know that 
$$\mathcal{B}_{k+l}
\supseteq
\operatorname{span}
\left(
\left\{
\tilde{F}_{i}^{e}\tilde{f}_{1}^{o}
\right\}_{1\leq i\leq l-1}
\bigcup
\left\{
\tilde{F}_{l}^{e}\tilde{f}_{j}^{o}
\right\}_{1\leq j\leq k+1}
\right).
$$
Therefore, we only need to prove that 
\begin{equation}\label{aim}
    \tilde{F}_{k+l}^{e}\tilde{f}_{1}^{o}\in 
\operatorname{span}
\left(
\left\{
\tilde{F}_{i}^{e}\tilde{f}_{1}^{o}
\right\}_{1\leq i\leq l-1}
\bigcup
\left\{
\tilde{F}_{l}^{e}\tilde{f}_{j}^{o}
\right\}_{1\leq j\leq k+1}
\right).
\end{equation}
By the first part of hypothesis (H$_{3}$), we have 
$\tilde{F}_{l}^{e}\tilde{f}_{k+1}^{o}\in\mathcal{B}_{k+l}\setminus\mathcal{B}_{k+l-1}$, and hence 
\begin{equation*}
   \tilde{F}_{l}^{e}\tilde{f}_{k+1}^{o}=a_{k+l}\tilde{F}_{k+l}^{e}\tilde{f}_{1}^{o}+a_{k+l-1}\tilde{F}_{k+l-1}^{e}\tilde{f}_{1}^{o}+\dots+a_{1}\tilde{F}_{1}^{e}\tilde{f}_{1}^{o}, 
\end{equation*}
where $a_{k+l}\not=0$.
It follows immediately from the above equality and \eqref{calim3} that \eqref{aim} holds.
The second equality in \eqref{calim1} follows by the same induction argument on $l$, and hence the proof is omitted.
Therefore, we have verified this claim.
}

Then, by the second part of hypothesis (H$_{3}$), we consider the following {three} cases: 

Case 1: $m-m_{1}+n_{1}\geq m_{1}+1$ {and $m_{1}\geq m-m_{1}-1$}.  
We define
{
\begin{equation*}
    \mathcal{C}=\{\mu\in\mathbb{R}^{2(n+m+2)}\mid \tilde{b}_{1,m-m_{1}}=1, \tilde{b}_{1,k}=0 \text{ for } 1\leq k < m-m_{1}, \tilde{b}_{2,i}=0 \text{ for } m-m_{1}\leq i \leq m_{1}\}.
\end{equation*}
}
If $\mu\in \mathcal{A}\cap\mathcal{C}$, we have that
{
    \begin{equation*}
        \begin{split}
            M_{2}(\rho;\mu)
            =&a_{20}\int_{-T}^{T}f_{0}(t)\,dt\rho^{p}+2\tilde{b}_{20}\rho^{p}\int_{0}^{T}\tilde{f}^{e}_{0}(t)x_{0}^{q-p}(t,\rho)dt\\
            &-(q-p)\sum_{i=1}^{m-m_{1}-1}\left(2\tilde{b}_{2i}\rho^{p}\int_{0}^{T}\tilde{F}^{e}_{i}(t)\tilde{f}^{o}_{1}(t)x_{0}^{q-1}(t,\rho)dt\right)\\
            &+(q-p)\sum_{l=1}^{n_{1}}\left(2\tilde{a}_{1,l}\rho^{p}\int_{0}^{T}\tilde{F}^{e}_{l}(t)\tilde{f}^{o}_{m-m_{1}}(t)x_{0}(t,\rho)^{q-1}dt\right).
        \end{split}
\end{equation*}
}
According to \eqref{calim1}, we know that $\mathbb{R}^{K+1}=\hat{\beta}\left(\mathcal{A}\cap\mathcal{C}\right)$.

{
    Case 2: $m-m_{1}+n_{1}\geq m_{1}+1$ {and $m_{1}\geq n_{1}-1$}.  
We define
{
\begin{equation*}
    \mathcal{C}=\{\mu\in\mathbb{R}^{2(n+m+2)}\mid \tilde{a}_{1,n_{1}}=1, \tilde{a}_{1,k}=0 \text{ for } 1\leq k < n_{1}, \tilde{b}_{2,i}=0 \text{ for } n_{1}\leq i \leq m_{1}\}.
\end{equation*}
}
If $\mu\in \mathcal{A}\cap\mathcal{C}$, we have that
{
    \begin{equation*}
        \begin{split}
            M_{2}(\rho;\mu)
            =&a_{20}\int_{-T}^{T}f_{0}(t)\,dt\rho^{p}+2\tilde{b}_{20}\rho^{p}\int_{0}^{T}\tilde{f}^{e}_{0}(t)x_{0}^{q-p}(t,\rho)dt\\
            &-(q-p)\sum_{i=1}^{n_{1}-1}\left(2\tilde{b}_{2i}\rho^{p}\int_{0}^{T}\tilde{F}^{e}_{i}(t)\tilde{f}^{o}_{1}(t)x_{0}^{q-1}(t,\rho)dt\right)\\
            &+(q-p)\sum_{l=1}^{m-m_{1}}\left(2\tilde{b}_{1,l}\rho^{p}\int_{0}^{T}\tilde{F}^{e}_{n_{1}}(t)\tilde{f}^{o}_{l}(t)x_{0}(t,\rho)^{q-1}dt\right).
        \end{split}
\end{equation*}
}
According to \eqref{calim1}, we know that $\mathbb{R}^{K+1}=\hat{\beta}\left(\mathcal{A}\cap\mathcal{C}\right)$.
}

Case 3: $m-m_{1}+n_{1}< m_{1}+1$. 
We define
\begin{equation*}
    \mathcal{C}=\{\mu\in\mathbb{R}^{2(n+m+2)}\mid \tilde{b}_{1,k}=0 \text{ for } 1\leq k \leq m-m_{1}\}.
\end{equation*}
If $\mu\in \mathcal{A}\cap\mathcal{C}$, we have that
\begin{equation*}
    \begin{split}
            M_{2}(\rho;\mu)
            =&a_{20}\int_{-T}^{T}f_{0}(t)\,dt\rho^{p}+2\tilde{b}_{20}\rho^{p}\int_{0}^{T}\tilde{f}^{e}_{0}(t)x_{0}^{q-p}(t,\rho)dt\\
            &-(q-p)\sum_{i=1}^{m_{1}}\left(2\tilde{b}_{2i}\rho^{p}\int_{0}^{T}\tilde{F}^{e}_{i}(t)\tilde{f}^{o}_{1}(t)x_{0}^{q-1}(t,\rho)dt\right)
    \end{split}
\end{equation*}
It is clear that $\mathbb{R}^{K+1}=\hat{\beta}\left(\mathcal{A}\cap\mathcal{C}\right)$.

Finally, we define $\beta:=\widehat\beta|_{\mathcal A}$ which is exactly the surjection from $\mathcal{A}$ to $\mathbb{R}^{K+1}$ we have been seeking.

This completes the proof.
\end{proof}

\subsection{Proofs of the main results}
This part is devoted to the proof of Theorems \ref{thm1}, \ref{thm2} and \ref{thm3}.
First, we give the proof of Theorem \ref{thm1}.

\begin{proof}[Proof of Theorem \ref{thm1}]
By applying Remark \ref{expression of MF}, we have that
\begin{equation*}
    M_{1}(\rho;\mu)=\rho^{p} \tilde{M}_{1}(\rho;\mu) \text{ with } \tilde{M}_{1}(\rho;\mu)=\tilde{a}_{1,0}+\sum_{k=0}^{m_{1}}\left(2\tilde{b}_{1,k}\int_{0}^{T}\tilde{f}^{e}_{k}(t)x_{0}(t,\rho)^{q-p}dt\right),
\end{equation*}
where $\tilde{a}_{1,0}:=a_{10}\int_{-T}^{T}f_{0}(t)\,dt$.
It is clear that $M_{1}$ and $\tilde{M}_{1}$ have the same number of positive and negative zeros, counted with multiplicities.
For convenience, we introduce the function
\begin{equation*}
    \begin{split}
        N(\rho;\mu):=\sum_{k=0}^{m_{1}}\left(2\tilde{b}_{1,k}\int_{0}^{T}\tilde{f}^{e}_{k}(t)x_{0}(t,\rho)^{q-p}dt\right)=\sum_{k=0}^{m_{1}}\left(2\tilde{b}_{1,k}\rho^{q-p}\mathcal{T}_{k,\alpha}(\rho^{p-1})\right),
    \end{split}
\end{equation*}
where $\alpha=\frac{q-p}{p-1}$ and the definition of $\mathcal{T}_{k,\alpha}$ is given in \eqref{Definition of T}.
Differentiating $N(\rho;\mu)$ with respect to $\rho$, we obtain
\begin{equation*}
    \begin{split}
        N'(\rho;\mu)=(q-p)\rho^{q-p-1}\sum_{k=0}^{m_{1}}\left(2\tilde{b}_{1,k}\mathcal{T}_{k,\alpha+1}(\rho^{p-1})\right).
    \end{split}
\end{equation*}
Moreover, applying Lemma \ref{chebyshev3} to $N$ and $N'$, the Chebyshev property gives the following estimates.

\medskip
\noindent
{Claim 1.}
Provided that $N(\cdot;\mu)\not\equiv 0$, the function $N(\cdot;\mu)$ has at most $m_1$ zeros in $J\cap(-\infty,0)$, counted with multiplicities. The same upper bound holds in $J\cap(0,+\infty)$.

\medskip
\noindent
{Claim 2.}
Provided that $N'(\cdot;\mu)\not\equiv 0$, the function $N'(\cdot;\mu)$ has at most $m_1$ zeros in $J\cap(-\infty,0)$, counted with multiplicities. The same upper bound holds in $J\cap(0,+\infty)$.

\medskip
\noindent
{Claim 3.}
Suppose that $p$ is even. If $N(\cdot;\mu)\not\equiv 0$, then $N(\cdot;\mu)$ has at most $m_1$ zeros in $J\setminus\{0\}$, counted with multiplicities. Likewise, if $N'(\cdot;\mu)\not\equiv 0$, then $N'(\cdot;\mu)$ has at most $m_1$ zeros in $J\setminus\{0\}$, counted with multiplicities.

The degenerate cases \(N(\cdot;\mu)\equiv0\) and \(N'(\cdot;\mu)\equiv0\) do not affect the conclusions. 
Indeed, in either case \(N(\cdot;\mu)\) is constant on each interval under consideration, and hence so is
\[
\tilde M_1(\rho;\mu)
=
\tilde a_{1,0}+N(\rho;\mu).
\]
Since \(M_1(\cdot;\mu)\not\equiv0\), this constant is nonzero. 
In either degenerate case, $\tilde M_1(\cdot;\mu)$ has no zeros on the interval under consideration, and hence these cases do not affect the subsequent zero-counting arguments.

Let us prove next each one of the assertions in the statement of the result:

\begin{itemize}
    \item[(i)] From the expression of $\tilde{M}_{1}(\rho;\mu)$ and Proposition \ref{chebyshev4}, we obtain $\mathbb{Z}(M_{1})^{\pm}\leq m_{1}+1$.
Moreover, there exists $\mu_{0}\in\mathbb{R}^{2(n+m+2)}$ such that $\tilde{M}_{1}(\rho;\mu_{0})$ has exactly $m_{1}+1$ positive simple zeros.
Since $\tilde{M}_{1}(\rho;\mu_{0})$ is an even function with respect to $\rho$ in the case that both $p$ and $q$ are odd, 
it also has $m_{1}+1$ negative simple zeros.
Therefore, $\tilde{M}_{1}(\rho;\mu_{0})$ has $2(m_{1}+1)$ simple zeros in $J\setminus\{0\}$.

\item[(ii)] Proposition~\ref{chebyshev4} again gives $\mathbb{Z}(M_{1})^{\pm}\leq m_{1}+1$.
We prove that these two equalities cannot hold simultaneously. 
Suppose by contradiction that there exists $\mu_{0}\in\mathbb{R}^{2(n+m+2)}$ such that $\tilde{M}_{1}(\rho;\mu_{0})$ has $m_{1}+1$ positive zeros $0<\rho_{1}^{+}\leq\cdots\leq\rho_{m_{1}+1}^{+}$ and $m_{1}+1$ negative zeros $0>-\rho_{1}^{-}\geq\cdots\geq-\rho_{m_{1}+1}^{-}$, counted with multiplicities.
Notice that $N(\rho;\mu_{0})$ is odd, since $q-p$ is odd and $p-1$ is even. 
Therefore,
\begin{equation}\label{21}
    \begin{split}
        &N(\rho_i^+;\mu_{0}) = \tilde{M}_1(\rho_i^+;\mu_{0}) - \tilde{a}_{1,0} =-\tilde{a}_{1,0},\\
        &N(\rho_j^-;\mu_{0}) = -\tilde{M}_1(-\rho_j^-;\mu_{0}) + \tilde{a}_{1,0} = \tilde{a}_{1,0},
    \end{split}
\end{equation}
for all $i, j \in \{1, 2, \dots, m_{1}+1\}$. 
This shows in particular that $\tilde{a}_{1,0} \neq 0$ (otherwise we get a contradiction with Claim 1), which in turn implies that $\rho_i^+ \neq \rho_j^-$ for all $i$ and $j$. 
On the other hand, $N' = \tilde{M}_1'$ has $m_{1}$ positive zeros $0 < \varrho_1^+ \le \dots \le \varrho_{m_{1}}^+$, and $m_{1}$ negative zeros $0 > -\varrho_1^- \ge \dots \ge -\varrho_{m_{1}}^-$, counted with multiplicities, satisfying $\varrho_i^\pm \in [\rho_i^\pm, \rho_{i+1}^\pm]$ for all $i$. 
Since $N'$ is even, Claim 2 implies that $\varrho_i^+ = \varrho_i^-$ for all $i$. 
Hence,
\begin{equation}\label{22}
    \quad [\rho_i^+, \rho_{i+1}^+] \cap [\rho_i^-, \rho_{i+1}^-] \neq \emptyset \quad \text{for all } i.
\end{equation}
For each $i = 1, 2, \dots, m_{1}+1$, let $J_i$ denote the open interval with endpoints $\rho_i^+$ and $\rho_i^-$. 
It follows from \eqref{22} that the intervals $J_1, J_2, \dots, J_{m_{1}+1}$ are pairwise non-intersecting. 
Moreover, by \eqref{21}, each $J_i$ contains at least one zero of $N$, which contradicts Claim 1. 
Therefore, either $\mathbb{Z}(M_{1})^{+} < m_{1}+1$ or $\mathbb{Z}(M_{1})^{-} < m_{1}+1$.

\begin{itemize}
    \item[(ii.1)] By Lemma \ref{chebyshev3}, $\left(\rho^{q-p}\mathcal{T}_{0,\alpha}(\rho^{p-1}), \rho^{q-p}\mathcal{T}_{1,\alpha}(\rho^{p-1}), \dots, \rho^{q-p}\mathcal{T}_{m_{1},\alpha}(\rho^{p-1})\right)$ is an ECT-system in $J\cap(0, +\infty)$. 
Consequently, there exist coefficients $\tilde{b}_{1,0}, \dots, \tilde{b}_{1,m_{1}} \in \mathbb{R}$ such that
$$\rho \longmapsto \rho^{q-p} \sum_{k=0}^{m_{1}} 2\tilde{b}_{1,k} \mathcal{T}_{k,\alpha}(\rho^{p-1})$$
has $m_{1}$ positive simple zeros and $m_{1}$ negative simple zeros. 
Here we have used the fact that this function is odd under the parity assumptions on $p$ and $q$. 
For the same reason, $\rho = 0$ is a zero with odd multiplicity. 
Choosing $\tilde{a}_{1,0}$ sufficiently small, the function
$$\rho \longmapsto \tilde{a}_{1,0} + \rho^{q-p} \sum_{k=0}^{m_{1}} 2\tilde{b}_{1,k} \mathcal{T}_{k,\alpha}(\rho^{p-1})$$
has $2m_{1}+1$ simple zeros in $J \setminus \{0\}$. 
Therefore, there exists $\mu_0 \in \mathbb{R}^{2(n+m+2)}$ such that $M_1(\rho; \mu_0)$ has $2m_{1}+1$ simple zeros in $J \setminus \{0\}$.

\item[(ii.2)] 
Using the same argument as above and choosing $\tilde a_{10}=0$, we obtain the existence of $\mu_0\in\mathbb R^{2(n+m+2)}$ such that $M_1(\rho;\mu_0)$ has $2m_1$ simple zeros in $J\setminus\{0\}$.
\end{itemize}

\item[(iii)] 
Let $\rho_{\mathbb{Z}(M_{1})^{-}}^-\leq\cdots\leq\rho_{1}^-<0<\rho_1^+\leq\cdots\leq\rho_{\mathbb{Z}(M_{1})^{+}}^+$ be the negative and positive zeros of $\tilde{M}_1(\rho; \mu)$ in $J$, respectively.
By Rolle's theorem, $N'=\tilde{M}_1'$ has at least $\mathbb{Z}(M_{1})^{+} - 1$ zeros in $J\cap[\rho_1^+, +\infty)$ and at least $\mathbb{Z}(M_{1})^{-} - 1$ zeros in $J\cap(-\infty, \rho_{1}^-]$, counted with multiplicities. 
Write
\begin{equation*}
   N'(\rho;\mu)=(q-p)\rho^{q-p-1}\mathcal E(\rho^{p-1};\mu)\text{ with } \mathcal E(y;\mu):=\sum_{k=0}^{m_1}2\tilde b_{1,k}\mathcal T_{k,\alpha+1}(y). 
\end{equation*}
Since $p$ is even, the map $\rho\mapsto y=\rho^{p-1}$ is strictly increasing.
Therefore, $N'$ and $\mathcal{E}$ have the same positive and negative zeros.
If $\mathcal E(0;\mu)=0$, then, in addition to the zeros obtained on the two half-axes, $y=0$ is a zero of $\mathcal E$. 
Hence $\mathcal E$ has at least $\mathbb{Z}(M_{1})^{+}+\mathbb{Z}(M_{1})^{-}-1$ zeros on $J'$, counted with multiplicities.
Assume now that $\mathcal E(0;\mu)\neq0$. 
Since $q-p-1$ is even, $N'$ has a constant sign on a sufficiently small punctured neighborhood of the origin. 
On the other hand,
\begin{equation}\label{86g}
    N(\rho_{1}^-;\mu)=N(\rho_1^+;\mu)=-\tilde a_{1,0}.
\end{equation}
This shows the existence of a zero of $N'$ on $(\rho_{1}^-,\rho_1^+)\setminus\{0\}$.
Otherwise, since $q-p-1$ is even and $\mathcal E(0;\mu)\neq0$, the derivative $N'(\cdot;\mu)$ would have the same strict sign on $(\rho_1^-,0)$ and $(0,\rho_1^+)$. Hence $N(\cdot;\mu)$ would be strictly monotone on $(\rho_1^-,\rho_1^+)$, contradicting \eqref{86g}.
Consequently, $\mathcal E$ again has at least $\mathbb{Z}(M_{1})^{+}+\mathbb{Z}(M_{1})^{-}-1$ zeros on $J'$, counted with multiplicities.
Since $(\mathcal T_{0,\alpha+1},\ldots,\mathcal T_{m_1,\alpha+1})$ is an ECT-system on $J'$, we obtain
$\mathbb{Z}(M_{1})^{+}+\mathbb{Z}(M_{1})^{-}-1\leq m_1$, and hence $\mathbb{Z}(M_{1})^{+}+\mathbb{Z}(M_{1})^{-}\leq m_1+1$.

Repeating the argument used in the previous cases, there exists $\mu_0 \in \mathbb{R}^{2(n+m+2)}$ such that $M_1(\cdot; \mu_0)$ has $m_{1}+1$ simple zeros on the interval $J\cap(0,+\infty)$.

\item[(iv)] The function $N' = \tilde{M}_1'$ has at least $\mathbb{Z}(M_{1})^{+} + \mathbb{Z}(M_{1})^{-} - 2$ zeros in $J\setminus \{0\}$ counted with multiplicities. 
Therefore, by Claim 3, we obtain $\mathbb{Z}(M_{1})^{+} + \mathbb{Z}(M_{1})^{-} \le m_{1}+2$. 
To prove that this upper bound is sharp, we use the fact that, by Lemma \ref{chebyshev3}, 
$(\mathcal{T}_{0,\alpha+1}, \mathcal{T}_{1,\alpha+1}, \dots, \mathcal{T}_{m_{1},\alpha+1})$ is an ECT-system on $J$.
We distinguish two cases:

Case 1: $q$ even and $p < q$. 
Choose $\beta_{2}, \dots, \beta_{m_{1}+2} \in \mathbb{R}$ such that
\begin{equation}\label{F}
    \quad \mathcal{G}(\rho) := \sum_{k=0}^{m_{1}} \beta_{k+2} \mathcal{T}_{k,\alpha}(\rho^{p-1})
\end{equation}
has $m_{1}$ simple zeros $0 < \rho_1 < \rho_2 < \dots < \rho_{m_{1}}$ and satisfies $\mathcal{G}(0) \neq 0$. 
Hence, $\rho^{q-p}\mathcal{G}(\rho)$ has a zero at $\rho = 0$ of multiplicity $q-p$, which is even, and has simple zeros at $\rho_i$, $i = 1, 2, \dots, m_{1}$. 
Consequently, choosing $\beta_1$ sufficiently small in absolute value and with sign opposite to that of $G(0)$, the function $\beta_{1} + \rho^{q-p}\mathcal{G}(\rho)$ has $m_{1}$ simple zeros close to $\rho_1, \dots,  \rho_{m_{1}}$, and, in addition, one positive and one negative simple zero close to the origin $\rho=0$. 
Repeating the previous argument, there exists $\mu_0 \in \mathbb{R}^{2(n+m+2)}$ such that  $M_1(\rho; \mu_0)$ has $m_{1}+2$ simple zeros in $J \setminus \{0\}$.

Case 2: $p>q$. 
We choose $\beta_{2}, \dots, \beta_{m_{1}+2} \in \mathbb{R}$ such that
$$\sum_{k=0}^{m_{1}} \beta_{k+2} \mathcal{T}_{k,\alpha}(y)$$
has exactly $m_{1}$ simple zeros in $J^{'}$, one of them being $y = 0$, and the remaining ones being positive. 
Thus the function $\mathcal{G}(\rho)$ in \eqref{F} has exactly $m_{1}-1$ positive simple zeros, $\rho_2 < \rho_3 < \dots < \rho_{m_{1}}$, when $m_{1}>1$;
when $m_{1}=1$, this list is understood to be empty. 
Moreover, there exists an analytic function $\eta$ such that
$$
\mathcal{G}(\rho) = \sum_{k=0}^{m_{1}} \beta_{k+2} \mathcal{T}_{k,\alpha}(\rho^{p-1}) = \eta(\rho)\rho^{p-1} \quad \text{with } \eta(0) \neq 0.
$$
Without loss of generality, assume that $\eta(0) > 0$. 
Since $p$ is even, there exists a sufficiently small $\hat{\rho}>0$ such that $\rho\mathcal{G}(\rho) > 0$ for all $\rho \in (-\hat{\rho}, \hat{\rho}) \setminus \{0\}$. 
Define
$$
\hat{M}_1(\rho; \beta_{1}, \sigma) := \eta(\rho)\rho^{p-1} + \beta_{1}\rho^{p-q} + \sigma\mathcal{T}_{0,\alpha}(\rho^{p-1}).
$$
We divide the proof into two subcases according to the parity of \(q\).

Subcase 2.1: $q$ odd. 
In this case $\hat{M}_1(\rho; \beta_1, 0) = \rho^{p-q} (\eta(\rho)\rho^{q-1} + \beta_1)$ with $p - q$ odd and $q - 1 > 0$ even. 
Hence, by continuity, there exist sufficiently small $\beta_1^* < 0$ and $\tilde{\rho} \in (0, \hat{\rho})$ such that $\hat{M}_1(\rho; \beta_1^*, 0)$ has $m_{1}-1$ positive simple zeros close to $\rho_2, \dots, \rho_{m_{1}}$, and satisfies
\begin{equation*}
    \quad \quad \hat{M}_1(-\hat{\rho}; \beta_1^*, 0) < 0, \, \hat{M}_1(-\tilde{\rho}; \beta_1^*, 0) > 0, \, \hat{M}_1(\tilde{\rho}; \beta_1^*, 0) < 0, \, \hat{M}_1(\hat{\rho}; \beta_1^*, 0) > 0.
\end{equation*}
Notice that, by \eqref{Definition of T}, $\mathcal{T}_{0,\alpha}(\rho^{p-1}) > 0$ for all $\rho$. 
Therefore, choosing $\sigma^* > 0$ sufficiently small, the function $\hat{M}_1(\rho; \beta_1^*, \sigma^*)$ still has $m_{1}-1$ positive simple zeros close to $\rho_2, \dots, \rho_{m_{1}}$, and satisfies $\hat{M}_1(0; \beta_1^*, \sigma^*) > 0$, together with
\begin{equation*}
    \quad \quad \quad \hat{M}_1(-\hat{\rho}; \beta_1^*, \sigma^*) < 0, \, \hat{M}_1(-\tilde{\rho}; \beta_1^*, \sigma^*) > 0, \, \hat{M}_1(\tilde{\rho}; \beta_1^*, \sigma^*) < 0, \, \hat{M}_1(\hat{\rho}; \beta_1^*, \sigma^*) > 0.
\end{equation*}
Therefore, $\hat{M}_1(\rho; \beta_1^*, \sigma^*)$ has at least one zero in each of the intervals $(-\hat{\rho}, -\tilde{\rho})$, $(0, \tilde{\rho})$, and $(\tilde{\rho}, \hat{\rho})$. 
Hence, by construction, $M_1(\rho; \mu_0) = \rho^q \hat{M}_1(\rho; \beta_1^*, \sigma^*)$ has at least $m_{1}+2$ zeros in $J\setminus\{0\}$. 
Finally, since the upper bound $\mathbb{Z}(M_{1})^{+} + \mathbb{Z}(M_{1})^{-} \le m_{1} + 2$ has already been established, all these zeros must be simple, as desired.

Subcase 2.2: $q$ even. 
In this case $\hat{M}_1(\rho; \beta_1, 0) = \rho^{p-q}(\eta(\rho)\rho^{q-1} + \beta_1)$ with $p - q$ even and $q - 1 > 0$ odd. 
By continuity, there exist sufficiently small $\beta_1^* < 0$ and $\tilde{\rho} \in (0, \hat{\rho})$ such that $\hat{M}_1(\rho; \beta_1^*, 0)$ has $m_{1}-1$ positive simple zeros close to $\rho_2, \dots, \rho_{m_{1}}$ and satisfies
\begin{equation*}
    \hat{M}_1(\hat{\rho}; \beta_1^*, 0) > 0, \quad \hat{M}_1(-\hat{\rho}; \beta_1^*, 0) < 0, \quad \hat{M}_1(\tilde{\rho}; \beta_1^*, 0) < 0.
\end{equation*}
Since $\mathcal{T}_{0,\alpha}(\rho^{p-1}) > 0$ for all $\rho$, choosing $\sigma^* > 0$ sufficiently small ensures that $\hat{M}_1(\rho; \beta_1^*, \sigma^*)$ still has $m_{1}-1$ positive simple zeros close to $\rho_2, \dots, \rho_{m_{1}}$, and additionally satisfies
\begin{equation*}
    \quad \quad \quad \hat{M}_1(\hat{\rho}; \beta_1^*, \sigma^*) > 0, \, \hat{M}_1(-\hat{\rho}; \beta_1^*, \sigma^*) < 0, \, \hat{M}_1(\tilde{\rho}; \beta_1^*, \sigma^*) < 0, \, \hat{M}_1(0; \beta_1^*, \sigma^*) > 0.
\end{equation*}
Therefore, $\hat{M}_1(\rho; \beta_1^*, \sigma^*)$ has at least one zero in each of the intervals $(-\hat{\rho}, 0)$, $(0, \tilde{\rho})$, and $(\tilde{\rho}, \hat{\rho})$. 
Exactly as we did in the previous subcase, there exists $\mu_0 \in \mathbb{R}^{2(n+m+2)}$ such that $M_1(\rho; \mu_0) = \rho^q \hat{M}_1(\rho; \beta_1^*, \sigma^*)$ has exactly $m_{1}+2$ simple zeros in $J \setminus \{0\}$.
\end{itemize}
\end{proof}

\begin{proof}[Proof of Theorem \ref{thm2}]
By applying Proposition \ref{proposition1}, there exists a surjective map $\beta:\mathcal{A}\rightarrow\mathbb{R}^{K+1}$  such that 
    \begin{equation*}
        M_{2}(\rho;\mu)=\rho^{p}\tilde{M}_{2}(\rho;\mu),
    \end{equation*}
where
\begin{equation*}
    \tilde{M}_{2}(\rho;\mu)=\beta_{1}(\mu)+\beta_{2}(\mu)\int_{0}^{T}\tilde{f}^{e}_{0}(t)x_{0}(t,\rho)^{q-p}\,dt+\sum_{i=1}^{K-1}\beta_{i+2}(\mu)\int_{0}^{T}\tilde{F}^{e}_{i}(t)\tilde{f}^{o}_{1}(t)x_{0}(t,\rho)^{q-1}\,dt.
\end{equation*} 
Using identity \eqref{dengs}, we can rewrite $\tilde{M}_{2}(\rho;\mu)$ as
    \begin{equation*}
        \tilde{M}_{2}(\rho;\mu)=\beta_{1}(\mu)+\beta_{2}(\mu)\int_{0}^{T}\tilde{f}^{e}_{0}(t)x_{0}(t,\rho)^{q-p}\,dt+\sum_{i=1}^{K-1}\left(\frac{\beta_{i+2}(\mu)}{p-q}\int_{0}^{T}\tilde{f}^{e}_{i}(t)x_{0}^{q-p}(t,\rho)\,dt\right).
    \end{equation*}
    Since \(p\neq q\), the diagonal change of coordinates
\[
(\beta_1,\beta_2,\beta_3,\ldots,\beta_{K+1})
\longmapsto
\left(
\beta_1,\beta_2,
\frac{\beta_3}{p-q},\ldots,
\frac{\beta_{K+1}}{p-q}
\right)
\]
is invertible. Hence the effective coefficient map remains surjective onto
\(\mathbb R^{K+1}\). The zero-counting and sharpness arguments in the proof
of Theorem~\ref{thm1} therefore apply with \(m_1+1\) replaced by \(K\).
\end{proof}

\begin{proof}[Proof of Theorem \ref{thm3}]
Statement~(i) follows from the first-order analysis by the same argument, so we only prove statement~(ii).
Taking \(p=3\) and \(q=2\), it follows from statement~(ii.2) of Theorem~\ref{thm2} that there exists
\(\mu_{0}\in\mathbb{R}^{2(n+m+2)}\) such that
\[
M_{1}(\rho;\mu_{0})\equiv 0
\]
on \(J\), while \(M_{2}(\rho;\mu_{0})\not\equiv0\) has
\(2(K-1)\) simple zeros in \(J\setminus\{0\}\).
Define the displacement function
\[
D_{\varepsilon}(\rho)
:=
x(T,\rho;\mu_{0},\varepsilon)-\rho .
\]
Since \(M_{1}(\rho;\mu_{0})\equiv0\), we have
\[
D_{\varepsilon}(\rho)
=
\varepsilon^{2}M_{2}(\rho;\mu_{0})
+o(\varepsilon^{2}).
\]
Thus, the normalized displacement function
\[
\widehat D(\rho,\varepsilon)
:=
\frac{D_{\varepsilon}(\rho)}{\varepsilon^{2}},
\qquad \varepsilon\neq0,
\]
extends analytically to \(\varepsilon=0\) by setting
\[
\widehat D(\rho,0):=M_{2}(\rho;\mu_{0}).
\]
Applying the implicit function theorem to each simple zero of
\(M_{2}(\cdot;\mu_{0})\), we conclude that, for every sufficiently
small \(\varepsilon\neq0\), the equation
\[
D_{\varepsilon}(\rho)=0
\]
has at least \(2(K-1)\) nonzero solutions in \(J\setminus\{0\}\).

It remains to prove that the zero solution is also isolated. Since
\(M_{2}(\cdot;\mu_{0})\not\equiv0\), there exists
\(\rho_{*}\in J\) such that
\[
M_{2}(\rho_{*};\mu_{0})\neq0.
\]
Moreover,
\[
\lim_{\varepsilon\to0}
\frac{D_{\varepsilon}(\rho_{*})}{\varepsilon^{2}}
=
M_{2}(\rho_{*};\mu_{0})\neq0.
\]
Consequently, \(D_{\varepsilon}(\rho_{*})\neq0\) for every sufficiently
small \(\varepsilon\neq0\), and hence
\(D_{\varepsilon}\not\equiv0\) for every such \(\varepsilon\).
On the other hand, the zero solution persists under the perturbation, so
\[
D_{\varepsilon}(0)=0.
\]
Since \(D_{\varepsilon}\) is analytic with respect to \(\rho\) and is
not identically zero, its zeros are isolated. In particular,
\(\rho=0\) is an isolated fixed point of the Poincar\'e map and therefore
corresponds to an isolated closed solution.
Hence, for every sufficiently small \(\varepsilon\neq0\), the perturbed
equation has at least
\[
2(K-1)+1=2K-1
\]
limit cycles in \(J\). This proves statement~(ii).
\end{proof}

This completes the proofs of the main results.

\section{Applications to Abel equations with different coefficient families}\label{Applications}
In this section, we apply our Chebyshev framework to Abel equations \eqref{eq1} with trigonometric, polynomial, and hyperbolic coefficient families, respectively.
These coefficient families arise naturally in nonlinear dynamical systems through periodic forcing, polynomial nonlinearities, and exponential-type constitutive relations, respectively.

\subsection{Abel equations with trigonometric coefficients}
Abel equations \eqref{eq1} with trigonometric polynomial coefficients frequently arise in nonlinear systems subject to periodic forcing or parametric excitation. 
For example, periodically forced pendulum models \cite{Strogatz} and Josephson junction dynamics \cite{Barone} involve periodic nonlinearities or external excitations, which are naturally described by trigonometric functions and finite Fourier expansions.
Such systems provide natural motivations for considering Abel equations with trigonometric polynomial coefficients.

We consider equation \eqref{eq1} with $A$ and $B$ being trigonometric polynomials of degrees $n$ and $m$, respectively, that is,
\begin{equation*}
    A(t) =a_{0} + \sum_{i=1}^{n} \left(a_{i}\cos(it) + b_{i}\sin(it)\right), B(t) = c_{0} + \sum_{j=1}^{m} \left(c_{j}\cos(jt) + d_{j}\sin(jt)\right), t\in[-\pi,\pi].
\end{equation*}
Following the notation in the introduction, 
let $\mathcal{H}_{\mathbb{T}}(n,m)$ denote the supremum of the number of limit cycles of this equation. 
Some lower bounds for $\mathcal{H}_{\mathbb{T}}(n,m)$ are already known in \cite{AGY,HTV}. 
In fact, these results are recovered below as consequences of our general theorems.

\begin{theorem}
    Let \(n,m\in\mathbb{N}_{\geq 1}\). 
    The following lower bounds for $\mathcal{H}_{\mathbb{T}}(n,m)$ hold:
    \begin{itemize}
        \item[(i)](\cite{AGY}) $\mathcal{H}_{\mathbb{T}}(n,1)\geq n+2$ and $\mathcal{H}_{\mathbb{T}}(1,m)\geq 2m+1$.
        \item[(ii)](\cite{HTV}) $\mathcal{H}_{\mathbb{T}}(n,m)\geq 2(m+n)-1$.
    \end{itemize}
\end{theorem}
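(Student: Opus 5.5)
The plan is to realize the trigonometric setting as an instance of the framework of Section 2 and then read off the bounds from Theorem \ref{thm3}. On $[-T,T]=[-\pi,\pi]$ I would take the ordered family
\[
f_0=1,\quad f_1=\sin t,\quad f_2=\cos t,\quad f_3=\sin 2t,\quad f_4=\cos 2t,\ \ldots,\quad f_{2j-1}=\sin(jt),\quad f_{2j}=\cos(jt),\ \ldots
\]
The functions are analytic, $f_0>0$, and \eqref{eq:integral_zero} holds since $\int_{-\pi}^{\pi}f_k\,dt=0$ for $k\ge1$. Trigonometric polynomials of degree $n$ (resp.\ $m$) are exactly $\operatorname{span}(\mathcal F_{2n})$ (resp.\ $\operatorname{span}(\mathcal F_{2m})$). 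I would therefore apply the general results with truncation indices $2n$ and $2m$ in place of $n$ and $m$.

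\emph{Counting.} In $\mathcal F_{2m}$ the non-odd functions are $1,\cos t,\ldots,\cos mt$ and the odd ones are $\sin t,\ldots,\sin mt$. This gives $m_1=m$ and $2m-m_1=m$, and similarly $n_1=n$. Consequently $\tilde f^e_i=\cos(it)$, $\tilde F^e_i=\sin(it)/i$ and $\tilde f^o_k=\sin(kt)$.

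\emph{Hypotheses.} I would verify the three hypotheses in order.
\begin{itemize}
\item[(H$_1$)] This is immediate: $\sin t$ is odd and nonnegative on $[0,\pi]$.
\item[(H$_2$)] Under $x=\cos t$, a diffeomorphism of $(0,\pi)$ onto $(-1,1)$, $\operatorname{span}\{1,\cos t,\ldots,\cos rt\}$ becomes the space of polynomials in $x$ of degree $\le r$. Each truncation therefore has at most $r$ zeros, which gives the CT property.
\item[(H$_3$)] Writing $\sin(jt)=\sin t\,U_{j-1}(\cos t)$ with Chebyshev polynomials $U_j$ of the second kind, I get
\[
\tilde F^e_l\tilde f^o_k=\tfrac1l\,\sin^2 t\;U_{l-1}(\cos t)\,U_{k-1}(\cos t).
\]
Moreover $\mathcal B_r=\{\sin^2t\,g(\cos t):\deg g\le r-1\}$. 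Since $U_{l-1}U_{k-1}$ has exact degree $k+l-2$, the product lies in $\mathcal B_{k+l-1}\setminus\mathcal B_{k+l-2}$. The inequality $m_1=m\ge\min\{2m-m_1-1,n_1-1\}=\min\{m-1,n-1\}$ holds trivially.
\end{itemize}

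\emph{Conclusion.} With these data, $K=\max\{2m-m_1+n_1,\,m_1+1\}=\max\{m+n,\,m+1\}=m+n$. Theorem \ref{thm3}(ii) then gives $\mathcal H_{\mathbb T}(n,m)\ge 2(m+n)-1$, which is statement (ii). For (i), Theorem \ref{thm3}(i) with $m_1=m$ gives $\mathcal H_{\mathbb T}(1,m)\ge 2m+1$. Setting $m=1$ in (ii) gives $\mathcal H_{\mathbb T}(n,1)\ge 2n+1\ge n+2$ for all $n\ge1$, so both parts of (i) follow.

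The step I expect to need most care is the bookkeeping between the truncation indices $2n,2m$ of the basis and the degrees $n,m$ in Theorem \ref{thm3}. If this is mislabeled, $K$ comes out as $\max\{n,m+1\}$, and the bound $2(m+n)-1$ appears not to follow. I would double-check it by re-reading Case 1 of Proposition \ref{proposition1} directly in the trigonometric setting. There one fixes $b_1=\sin(mt)$ and lets $a_1$ vary, so the products $\sin(lt)\sin(mt)$ reach degrees $m,\ldots,m+n-1$ in $\mathcal B$. The $\tilde b_{2,i}$ terms supply indices $1,\ldots,m-1$. Together these cover the contiguous range $1,\ldots,m+n-1$, confirming $K-1=m+n-1$.
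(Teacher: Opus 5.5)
Your proposal is correct. For (H$_1$)--(H$_3$) and for statement (ii) it follows the paper's argument: the same family, truncations $\mathcal F_{2n},\mathcal F_{2m}$, $n_1=n$, $m_1=m$, $K=m+n$, and then Theorem \ref{thm3}(ii). Your degree count for $U_{l-1}U_{k-1}$ in (H$_3$) is a tidier form of the paper's expansion $U_{k-1}U_{l-1}=\sum_{r=0}^{l-1}U_{k-l+2r}$.

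The genuine difference is in (i).

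The paper proves $\mathcal H_{\mathbb T}(n,1)\ge n+2$ separately, at first order and with the exponents swapped ($p=2$, $q=3$). There $\sin t$ multiplies $x^2$ and the degree-$n$ perturbation multiplies $x^3$. Theorem \ref{thm1}(iii) with $m_1=n$ then gives $n+1$ simple nonzero zeros of $M_1$, and the origin supplies one more limit cycle. The paper obtains $\mathcal H_{\mathbb T}(1,m)\ge 2m+1$ from (ii) with $n=1$.

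You do the reverse. You take $\mathcal H_{\mathbb T}(1,m)\ge 2m+1$ from the first-order Theorem \ref{thm3}(i). You then note that (ii) with $m=1$ already gives $\mathcal H_{\mathbb T}(n,1)\ge 2n+1\ge n+2$.

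Your route is shorter and shows that (i) is entirely a corollary of (ii). The paper's route shows that $n+2$ is already reached at first order, without invoking (H$_3$). That reproduces the mechanism of \'{A}lvarez et al.\ and illustrates the case $p$ even, $q$ odd of Theorem \ref{thm1}.
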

\begin{proof}
Statement~(ii) with $n=1$ coincides with the second half of statement~(i). 
Therefore, it suffices to prove the first half of statement~(i), and the second half follows directly from statement~(ii).
   
Set $\mathcal{F}=\{f_{0},f_{1},f_{2},f_{3},f_{4},\dots\}=\{1, \sin t, \cos t, \sin 2t, \cos 2t, \dots\}$.
We now verify that $\mathcal{F}$ satisfies hypotheses (H$_{1}$), (H$_{2}$) and (H$_{3}$).
\begin{itemize}
    \item[(H$_{1}$)] $f_{1}(t)=\sin t$ is odd and positive on $(0,T)$, so (H$_{1}$) holds.
    \item[(H$_{2}$)] For any $N\in\mathbb{N}$, we have 
                \begin{equation*}
            \mathcal{F}_{N}=
                    \begin{cases}
                        \{1, \sin t, \cos t, \ldots, \sin \frac{N}{2}t, \cos \frac{N}{2}t\}, & \text{if } N \text{ is even},\\
                        \{1, \sin t, \cos t, \ldots, \sin \frac{N+1}{2}t\}, & \text{if } N \text{ is odd}.
                    \end{cases}
                \end{equation*}
            By the definition of $\mathcal{N}_{N}^{\text{non-odd}}$, we have $\#\mathcal{N}_{N}^{\text{non-odd}}=\left[\frac{N}{2}\right]+1$ and
            \begin{equation*}
                \{f_{i}^{e}\}_{i\in\mathcal{N}_{N}^{\text{non-odd}}}=\left\{1,\cos t, \cos 2t,\cdots,\cos \left[\frac{N}{2}\right]t\right\}.
            \end{equation*}
            We now prove that $\{f_{i}^{e}\}_{i\in\mathcal{N}_{N}^{\text{non-odd}}}$ is a CT-system.
            For every $k\in\left\{0,1,\cdots,\left[N/2\right]\right\}$, consider the function
            \begin{equation*}
            f(t) = \sum_{i=0}^k a_i \cos(it),    
            \end{equation*}
            where $a_0,a_1,\ldots,a_k$ are arbitrary real numbers.
            Using the identity $\cos lt = T_l(\cos t)$, where $T_l(x)$ is the Chebyshev polynomial of the first kind of degree $l$, 
            and setting $x = \cos t$, we may write $f(t)$ as a polynomial $P(x)$ in $x$:
            $$P(x) = \sum_{i=0}^k a_i T_i(x).$$
            Since $P(x)$ is a polynomial of degree at most $k$, it has at most $k$ distinct roots.
            The change of variables $x = \cos(t)$ defines a bijection from $(0, \pi)$ onto $(-1, 1)$, 
            which yields a one-to-one correspondence between the roots of $f(t)$ in $(0, \pi)$ and those of $P(x)$ in $(-1, 1)$.
            Consequently, $\{f_i^e\}_{i\in\mathcal{N}_{N}^{\text{non-odd}}}$ is a CT-system, and (H$_2$) holds.
        \item[(H$_{3}$)] In accordance with the setting in Section~\ref{setting}, we introduce the notation
{
\begin{equation*}
    \begin{split}
        &\tilde{f}_{n}^{e}=\cos nt, \quad \tilde{F}_{n}^{e}=\frac{1}{n}\sin nt, \quad \tilde{f}_{n}^{o}=\sin nt,\\
        & \mathcal{B}_{n}=\operatorname{span}\left(\left\{\sin^{2} t,\cdots,\frac{1}{n}\sin nt \sin t\right\}\right).
    \end{split}
\end{equation*}
}

Using the identities
\begin{equation*}
  \sin(n+1)t=\sin t\, U_{n}(\cos t)\ \ \text{and}\ \ U_{k-1}(x)\,U_{l-1}(x)=\sum_{r=0}^{l-1}U_{k-l+2r}(x) \text{ for }k\geq l\geq 1,
\end{equation*}
where $U_i(x)$ denotes the Chebyshev polynomial of the second kind, and exploiting the symmetry in \(k\) and \(l\), we may assume \(k\ge l\ge1\).
Then, we have
            \begin{equation}\label{doln}
                \begin{split}
                    l\tilde{F}_{l}^{e}\tilde{f}_{k}^{o}&= \sin lt \sin kt=\sin ^{2} t\, U_{l-1}(\cos t)U_{k-1}(\cos t)\\
                    &=\sin ^{2} t\sum_{r=0}^{l-1}U_{k-l+2r}(\cos t)=\sin t\, \sum_{r=0}^{l-1}\sin t\, U_{k-l+2r}(\cos t)\\
                    &= \sum_{r=0}^{l-1}\sin\,(k-l+2r+1)t \,\sin t\in \mathcal{B}_{k+l-1}\setminus\mathcal{B}_{k+l-2}.
                \end{split}
            \end{equation}
For coefficient degrees \(n,m\geq1\), the corresponding truncations are \(\mathcal F_{2n}\) and \(\mathcal F_{2m}\). 
Hence $n_1=n$, $m_1=m$, $2m-m_1=m$, and therefore $m_1=m\geq\min\{m-1,n-1\}$.
Thus the numerical part of (H$_3$) also holds.
\end{itemize}
     
We now prove statements (i) and (ii).

    (i) Consider the perturbed Abel equation
\begin{equation}\label{exam1}
\frac{\mathrm{d}x}{\mathrm{d}t}=(\sin t+\varepsilon P_1(t)+\varepsilon^{2} P_2(t))x^2+(\varepsilon Q_1(t)+\varepsilon^{2} Q_2(t))x^3,
\end{equation}
where $P_i(t)$ and $Q_i(t)$, $i=1,2$, are trigonometric polynomials of degrees $1$ and $n$, respectively.
This shows that $Q_{i}(t)\in\operatorname{span}(\mathcal{F}_{2n})=\operatorname{span}\left(\{1,\sin t, \cos t, \dots, \sin nt, \cos nt\}\right)$, and $\#\mathcal{N}_{2n}^{\text{non-odd}}=n+1$.
Taking $p=2$ and $q=3$, it follows from statement (iii) of Theorem \ref{thm1} and the same argument in the proof of statement (ii) of Theorem \ref{thm3} that, for every sufficiently small $\varepsilon\ne0$, equation \eqref{exam1} possesses at least $n+1$ nonzero limit cycles.
Therefore $\mathcal{H}_{\mathbb{T}}(n,1)\geq n+2$.

    (ii) Consider the perturbed Abel equation
\begin{equation}\label{exam2}
    \frac{dx}{dt}=(\sin t+\varepsilon P_{1}(t)+\varepsilon^{2} P_{2}(t))x^{3}+(\varepsilon Q_{1}(t)+\varepsilon^{2} Q_{2}(t))x^{2},
\end{equation}
where $P_i(t)$ and $Q_i(t)$, $i=1,2$, are trigonometric polynomials of degrees $n$ and $m$, respectively.
By statement (ii) of Theorem \ref{thm3}, $\mathcal{H}_{\mathbb{T}}(n,m)\geq 2(m+n)-1$.
\end{proof}

\subsection{Abel equations with polynomial coefficients}
Polynomial-coefficient Abel equations are closely related to polynomial nonlinear oscillators. 
In particular, certain Li\'{e}nard-type equations can be transformed into first-order Abel equations, and this reduction includes generalized Van der Pol-type models (see \cite{Harko2014}). 
Polynomial nonlinearities also occur in classical oscillators such as the Duffing system \cite{GuckenheimerHolmes}.

As a second application, we consider equation~\eqref{eq1} with $A$ and $B$ being polynomials of degrees $n$ and $m$, respectively, that is,
\begin{equation*}
A(t)=\sum_{i=0}^{n}a_{i}t^{i}\quad\text{and}\quad B(t)=\sum_{j=0}^{m}b_{j}t^{j},\quad\ \ t\in[-T,T].
\end{equation*}
Similar to the trigonometric case, we denote by $\mathcal{H}_{\mathbb{P}}(n,m)$ the supremum of the number of limit cycles of equation~\eqref{eq1} in this setting.
To the best of our knowledge, the only known result concerning $\mathcal{H}_{\mathbb{P}}(n,m)$ is due to Lins-Neto \cite{Lins}, who proved that $$\mathcal{H}_{\mathbb{P}}(m,m)\geq m.$$
We obtain the following improved result.
\begin{theorem}
    Let \(n,m\geq2\). Then $\mathcal{H}_{\mathbb{P}}(n,m)\geq 2\left(m-\left[\frac{m}{2}\right]+\left[\frac{n}{2}\right]\right)-1$.
\end{theorem}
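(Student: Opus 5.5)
The plan is to apply Theorem \ref{thm3}(ii) to the monomial family. We take $\mathcal F=\{1,t,t^2,t^3,\dots\}$ on $[-T,T]$ and normalize it as in \eqref{eq:integral_zero}. The odd monomials already have zero integral, so they stay unchanged. Each even monomial $t^{2j}$ is replaced by $t^{2j}-c_{2j}$, where $c_{2j}=T^{2j}/(2j+1)$. This change of basis does not alter $\operatorname{span}(\mathcal F_n)$ or $\operatorname{span}(\mathcal F_m)$, so $\mathcal H_{\mathbb P}(n,m)=\mathcal H_{3,2}(n,m)$ for this family.

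In $\mathcal F_m$ the non-odd elements are those of even index $0,2,4,\dots$, so $m_1=[m/2]$, and likewise $n_1=[n/2]$. Since $n,m\ge2$, we have $m_1,n_1\ge1$. Moreover
\[
m-m_1+n_1=m-[m/2]+[n/2]\ \ge\ [m/2]+1=m_1+1,
\]
because $m-[m/2]\ge[m/2]$ and $[n/2]\ge1$. Hence $K=m-[m/2]+[n/2]$, and Theorem \ref{thm3}(ii) yields exactly the stated bound $2K-1$, once the hypotheses are checked.

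The hypotheses are verified as follows.
\begin{itemize}
\item (H$_1$): $f_1=t$ is odd and positive on $(0,T]$.
\item (H$_2$): the even parts are $\tilde f^e_i=t^{2i}-c_{2i}$. For each $r$, their first $k+1$ elements span the polynomials in $s=t^2$ of degree $\le k$. Since $t\mapsto t^2$ is a bijection from $(0,T)$ onto $(0,T^2)$, any nontrivial combination has at most $k$ zeros there. Thus the family is a CT-system on $(0,T)$.
\item Numerical part of (H$_3$): $m_1=[m/2]\ge \lceil m/2\rceil-1=m-m_1-1$.
\end{itemize}

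The key step is the structural part of (H$_3$). Here $\tilde f^o_k=t^{2k-1}$, and
\[
\tilde F^e_l(t)=\int_{-T}^t(s^{2l}-c_{2l})\,ds=\frac{t^{2l+1}}{2l+1}-c_{2l}t,
\]
an odd polynomial. It satisfies $\tilde F^e_l(\pm T)=0$ by the zero-integral normalization. We describe $\mathcal B_r$ explicitly. It is spanned by $g_i=t\,\tilde F^e_i$ for $i=1,\dots,r$. These are even polynomials with no constant term and degree exactly $2i+2$, and each vanishes at $t=T$. They are linearly independent by degree. Consider the space of even polynomials of degree $\le 2r+2$ without constant term that vanish at $T$; it has dimension $r$. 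Hence $\mathcal B_r$ equals this space.

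Now take the product $\tilde F^e_l\tilde f^o_k=\frac{t^{2k+2l}}{2l+1}-c_{2l}t^{2k}$. It is even and has no constant term since $k\ge1$. It vanishes at $T$ because $\tilde F^e_l(T)=0$. Its degree is exactly $2(k+l)=2\big((k+l-1)+1\big)$. So it lies in $\mathcal B_{k+l-1}$, and by degree it does not lie in $\mathcal B_{k+l-2}$. Analyticity of the $f_i$ and $f_0=1>0$ are clear, so Theorem \ref{thm3}(ii) applies.

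The main obstacle is this characterization of $\mathcal B_r$. It rests on two facts: the normalization forces $\tilde F^e_i(T)=0$, and this supplies the single linear constraint that matches the dimension count. Everything else is bookkeeping.
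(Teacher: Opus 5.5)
Your proposal is correct and follows the paper's proof: normalize the monomials so that (\ref{eq:integral_zero}) holds, verify (H$_1$)--(H$_3$), and invoke Theorem~\ref{thm3}(ii). The only difference is in the structural part of (H$_3$). The paper writes down the explicit identity $\tilde F^e_l\tilde f^o_k=\frac{2k+2l-1}{2l+1}\tilde F^e_{k+l-1}\tilde f^o_1-(2k-1)C_{2l}\tilde F^e_{k-1}\tilde f^o_1$ for $k\geq 2$. You instead identify $\mathcal B_r$ as the even polynomials of degree at most $2r+2$ with no constant term that vanish at $t=T$, and conclude by a degree count, which also shows that the membership comes from the constraint $\tilde F^e_l(T)=0$.
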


\begin{proof}
Without loss of generality, we may rewrite $A$ and $B$ in equation~\eqref{eq1} as
\begin{equation*}
A(t)=\alpha_{0}+\sum_{i=1}^{n}a_{i}\left(t^{i}-C_{i}\right)\quad\text{and}\quad B(t)=\beta_{0}+\sum_{j=1}^{m}b_{j}\left(t^{j}-C_{j}\right),
\end{equation*}
where $t\in[-T,T]$, $\alpha_{0}=a_{0}+\sum_{i=1}^{n}a_{i}C_{i}$, $\beta_{0}=b_{0}+\sum_{j=1}^{m}b_{j}C_{j}$, $C_{2k}=\frac{T^{2k}}{2k+1}$ and $C_{2k-1}=0$ for $k\in\mathbb{N}_{\geq 1}$.
Set $\mathcal{F}=\{f_0,f_1,f_2,f_3,f_4,\dots\}=\{1,t,t^2-C_2,t^3,t^4-C_4,\dots\}$.
Clearly, we have $\int_{-T}^{T}f_{i}(t)dt=0$ for $i\in\mathbb{N}_{\geq 1}$ and $A\in\operatorname{span}(\mathcal F_n)$, $B\in\operatorname{span}(\mathcal F_m)$.

Next, we verify that $\mathcal{F}$ satisfies hypotheses (H$_1$), (H$_2$) and (H$_3$).

\begin{itemize}
    \item[(H$_1$)] $f_1(t)=t$ is odd and positive on $(0,T)$, so (H$_1$) holds.
    
    \item[(H$_2$)] For any $N\in\mathbb{N}$, we have
    \begin{equation*}
    \mathcal{F}_N=
    \begin{cases}
    \{1,t,t^2-C_2,\dots,t^N-C_N\}, & \text{if $N$ is even},\\[4pt]
    \{1,t,t^2-C_2,\dots,t^N\}, & \text{if $N$ is odd}.
    \end{cases}
    \end{equation*}
    By the definition of $\mathcal{N}_N^{\text{non-odd}}$, we have $\#\mathcal{N}_N^{\text{non-odd}}=\left[\frac{N}{2}\right]+1$ and
    \begin{equation*}
    \{f_i^e\}_{i\in\mathcal{N}_N^{\text{non-odd}}}=\bigl\{1,t^2-C_2,t^4-C_4,\dots,t^{2\left[\frac{N}{2}\right]}-C_{2\left[\frac{N}{2}\right]}\bigr\}.
    \end{equation*}
    We now prove that $\{f_i^e\}_{i\in\mathcal{N}_N^{\text{non-odd}}}$ is a CT-system.
    For every $k\in\{0,1,\dots,\left[\frac{N}{2}\right]\}$, observe that
    \begin{equation*}
    \operatorname{span}\bigl(\{1,t^2-C_2,\dots,t^{2k}-C_{2k}\}\bigr)=\operatorname{span}\bigl(\{1,t^2,\dots,t^{2k}\}\bigr).
    \end{equation*}
    Since $\{1,t^2,\dots,t^{2k}\}$ is a CT-system on $(0,T)$, it follows that $\{f_i^e\}_{i\in\mathcal{N}_N^{\text{non-odd}}}$ is a CT-system on $(0,T)$.
    Hence (H$_2$) holds.
    
    \item[(H$_3$)] In accordance with the setting in Section~\ref{setting}, we introduce the notation
    {
\begin{equation*}
    \begin{split}
        &\tilde{f}_{n}^{e}=t^{2n}-C_{2n}, \quad \tilde{F}_{n}^{e}=\tfrac{1}{2n+1}t^{2n+1}-C_{2n}t, \quad \tilde{f}_{n}^{o}=t^{2n-1},\\
        &\mathcal{B}_n=\operatorname{span}\left(\left\{\tfrac{1}{3}t^4-C_2t^2,\dots,\tfrac{1}{2n+1}t^{2n+2}-C_{2n}t^2\right\}\right).
    \end{split}
\end{equation*}
}
    Note that for all $l\in\mathbb{Z}_{\geq 1}$ and $k=1$,
\begin{equation*}
    \begin{split}
    \tilde{F}_l^e\tilde{f}_1^o=&\frac{1}{2l+1}t^{2l+2}-C_{2l}t^{2}\in\mathcal{B}_{l}\setminus\mathcal{B}_{l-1}.
    \end{split}
\end{equation*}
Moreover, for all \(l\in\mathbb{Z}_{\geq 1}\) and $k\in\mathbb{Z}_{\geq 2}$,
    \begin{equation*}
    \begin{split}
    \tilde{F}_l^e\tilde{f}_k^o=&\frac{1}{2l+1}t^{2k+2l}-C_{2l}t^{2k}\\
    =&\frac{2k+2l-1}{2l+1}\Bigl(\frac{1}{2k+2l-1}t^{2k+2l}-C_{2k+2l-2}t^2\Bigr)\\
    &-(2k-1)C_{2l}\Bigl(\frac{1}{2k-1}t^{2k}-C_{2k-2}t^2\Bigr)\in\mathcal{B}_{k+l-1}\setminus\mathcal{B}_{k+l-2}.
    \end{split}
    \end{equation*}
For the truncations \(\mathcal F_n\) and \(\mathcal F_m\),
$n_1=\left[\frac{n}{2}\right]$, $m_1=\left[\frac{m}{2}\right]$, $m-m_1=\left[\frac{m}{2}\right]$.
If \(n,m\geq2\), then $m_1
\geq
\min\left\{
\left\lceil\frac{m}{2}\right\rceil-1,
\left[\frac{n}{2}\right]-1
\right\}$.
Thus the numerical part of hypothesis~(H$_3$) is satisfied.
    Hence (H$_3$) holds.
\end{itemize}

Consider the perturbed Abel equation
\begin{equation}\label{exam3}
\frac{\mathrm{d}x}{\mathrm{d}t}=(t+\varepsilon P_1(t)+\varepsilon^{2} P_2(t))x^3+(\varepsilon Q_1(t)+\varepsilon^{2} Q_2(t))x^2,
\end{equation}
where $P_i(t)$ and $Q_i(t)$ for $i=1,2$, are polynomials of degrees $n$ and $m$, respectively, that is,
\begin{equation*}
P_i(t)=\sum_{j=0}^{n}a_{ij}t^{j},\quad Q_i(t)=\sum_{j=0}^{m}b_{ij}t^{j},\quad i=1,2.
\end{equation*}
By statement (ii) of Theorem~\ref{thm3}, $\mathcal{H}_{\mathbb{P}}(n,m)\geq 2\bigl(m-\left[\frac{m}{2}\right]+\left[\frac{n}{2}\right]\bigr)-1$.
\end{proof}

\subsection{Abel equations with hyperbolic coefficients}
Beyond polynomial and trigonometric coefficient functions, Abel-type equations with exponential factors also arise in reductions of certain nonlinear partial differential equations (see \cite{Mhadhbi2024}). 
Since hyperbolic functions are linear combinations of exponentials, this motivates examining hyperbolic coefficient families as a representative non-polynomial test case for our Chebyshev framework.
Recall that
\begin{equation*}
    \sinh t=\frac{e^{t}-e^{-t}}{2}\quad\text{and} \quad\cosh t=\frac{e^{t}+e^{-t}}{2}.
\end{equation*}

We consider equation~\eqref{eq1} with hyperbolic polynomial coefficients and set
\begin{equation*}
    \begin{split}
            A(t)& =a_{0} + \sum_{i=1}^{n} \left(a_{i}\cosh (it) + b_{i}\sinh (it)\right),\\ 
            B(t)& = c_{0} + \sum_{j=1}^{m} \left(c_{j}\cosh (jt) + d_{j}\sinh (jt)\right).
    \end{split}
\end{equation*}
Similarly, we denote by $\mathcal{H}_{\mathbb{H}}(n,m)$ the supremum of the number of limit cycles of this equation.
As we did in the previous subsection, without loss of generality, we may write $A$ and $B$ in the following form:
\begin{equation*}
    \begin{split}
            A(t)&=\alpha + \sum_{i=1}^{n} \left(a_{i}\left(\cosh (it)-C_{i}\right) + b_{i}\sinh (it)\right),\\
            B(t)&= \beta + \sum_{j=1}^{m} \left(c_{j}\left(\cosh (jt)-C_{j}\right) + d_{j}\sinh (jt)\right),
    \end{split}
\end{equation*}
where $t\in[-T,T]$, $C_{k}=\frac{\sinh (kT)}{kT}$, $\alpha=a_{0}+\sum_{i=1}^{n}a_{i}C_{i}$ and $\beta=c_{0}+\sum_{j=1}^{m}c_{j}C_{j}$.

Set $\mathcal{F}=\{f_{0},f_{1},f_{2},f_{3},f_{4},\cdots\}=\{1, \sinh t, \cosh t-C_{1}, \sinh (2t), \cosh (2t)-C_{2}, \ldots\}$.
By verifying that $\mathcal{F}$ satisfies hypotheses (H$_{1}$) and (H$_{2}$), we have the following result:
\begin{theorem}
    For $n,m\in\mathbb N_{\geq1}$, one has
\[
\mathcal H_H(n,m)\geq 2m+1.
\]
\end{theorem}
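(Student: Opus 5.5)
The plan is to obtain the bound as a direct application of Theorem~\ref{thm3}(i) with $p=3$, $q=2$, for the family $\mathcal F=\{1,\sinh t,\cosh t-C_1,\sinh 2t,\cosh 2t-C_2,\dots\}$. The work is to check the standing assumptions of Section~\ref{setting} and hypotheses (H$_1$), (H$_2$), and then to count $m_1$. Hypothesis (H$_3$) is not needed, since only the first-order statement is used.

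First, the setting. Every $f_i$ is entire, hence real-analytic near $[-T,T]$, and $f_0=1>0$. The normalization \eqref{eq:integral_zero} holds. For the odd functions $\sinh(kt)$ this is immediate. For the even ones,
\[
\int_{-T}^{T}\bigl(\cosh(kt)-C_k\bigr)\,dt=\frac{2\sinh(kT)}{k}-2TC_k=0
\]
by the choice $C_k=\sinh(kT)/(kT)$. A hyperbolic polynomial $A$ of degree $n$ lies in $\operatorname{span}(\mathcal F_{2n})$, and $B$ of degree $m$ lies in $\operatorname{span}(\mathcal F_{2m})$. Since $n\ge1$, we have $f_1=\sinh t\in\operatorname{span}(\mathcal F_{2n})$, so the unperturbed equation $dx/dt=\sinh(t)\,x^3$ is admissible.

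(H$_1$) holds because $\sinh$ is odd and positive on $(0,T]$. For (H$_2$), the non-odd indices in $\mathcal F_{N}$ give even parts $\{1,\cosh t-C_1,\dots,\cosh([N/2]t)-C_{[N/2]}\}$. For each $k$, their first $k+1$ members span the same space as $\{1,\cosh t,\dots,\cosh kt\}$. I will use $\cosh(jt)=T_j(\cosh t)$ with $T_j$ the Chebyshev polynomial of the first kind, which follows from the same identity as in the trigonometric case with $t\mapsto it$. The substitution $x=\cosh t$ is a bijection from $(0,T)$ onto $(1,\cosh T)$. So a nontrivial combination becomes a polynomial of degree at most $k$ in $x$ and has at most $k$ zeros on $(0,T)$. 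This is exactly the CT-property, mirroring the trigonometric proof.

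Counting, $\mathcal N^{\text{non-odd}}_{2m}$ consists of $0$ and the $m$ indices of the $\cosh(jt)-C_j$, so $m_1=m\ge1$; similarly $n_1=n\ge1$. Theorem~\ref{thm3}(i) then gives $\mathcal H_{\mathbb H}(n,m)\ge 2m_1+1=2m+1$.

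Spelled out, Theorem~\ref{thm1}(ii.2), with $p=3$ odd, $q=2$ even and $p>q$, yields $\mu_0$ such that $M_1(\cdot;\mu_0)$ has $2m$ simple zeros in $J\setminus\{0\}$. The implicit function theorem turns these into $2m$ hyperbolic limit cycles. The solution $x\equiv0$ is isolated because the displacement function is analytic and not identically zero, exactly as in the proof of Theorem~\ref{thm3}, which adds one more. The only step needing real care is (H$_2$), namely the polynomial reduction via $x=\cosh t$ and the fact that it maps $(0,T)$ monotonically. Everything else is bookkeeping of indices.
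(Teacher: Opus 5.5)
Your proposal is correct and follows the paper's proof. Both verify (H$_1$) and (H$_2$) for the shifted hyperbolic family via $\cosh(jt)=T_j(\cosh t)$ and the monotone substitution $x=\cosh t$ on $(0,T)$, note that $m_1=m$ for the truncation $\mathcal F_{2m}$, and then invoke Theorem~\ref{thm3}(i). Your extra checks are details the paper leaves implicit: the normalization \eqref{eq:integral_zero}, that $\sinh t\in\operatorname{span}(\mathcal F_{2n})$, and that the shifts by $C_j$ do not change the spans used in (H$_2$).
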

\begin{proof}
    \begin{itemize}
        \item[(H$_{1}$)] Clearly, $f_{1}(t)=\sinh t$ is odd and positive on $(0,T)$. Hence (H$_{1}$) holds.
        \item[(H$_{2}$)] For any $N\in\mathbb{N}$, we have 
                \begin{equation*}
            \mathcal{F}_{N}=
                    \begin{cases}
                        \{1, \sinh t, \cosh t-C_{1}, \ldots, \sinh \left(\frac{N}{2}t\right), \cosh \left(\frac{N}{2}t\right)-C_{\frac{N}{2}}\}, & \text{if } N \text{ is even},\\
                        \{1, \sinh t, \cosh t-C_{1}, \ldots, \sinh \left(\frac{N+1}{2}t\right)\}, & \text{if } N \text{ is odd}.
                    \end{cases}
                \end{equation*}
            From the definition of $\mathcal{N}_{N}^{\text{non-odd}}$ we obtain that $\#\mathcal{N}_{N}^{\text{non-odd}}=\left[\frac{N}{2}\right]+1$ and
            \begin{equation*}
                \{f_{i}^{e}\}_{i\in\mathcal{N}_{N}^{\text{non-odd}}}=\left\{1,\cosh t-C_{1}, \cosh 2t-C_{2},\cdots,\cosh \left(\left[\frac{N}{2}\right]t\right)-C_{\left[\frac{N}{2}\right]}\right\}.
            \end{equation*}
            We now prove that $\{f_{i}^{e}\}_{i\in\mathcal{N}_{N}^{\text{non-odd}}}$ is a CT-system.
            For every $k\in\left\{0,1,\cdots,\left[\frac{N}{2}\right]\right\}$, consider the function
            \begin{equation*}
            f(t) = \sum_{i=0}^k a_i \cosh (it),    
            \end{equation*}
            where $a_0,a_1,\ldots,a_k$ are arbitrary real numbers.
            We use the identity $\cosh lt = T_l(\cosh t)$, where $T_l(x)$ is the Chebyshev polynomial of the first kind of degree $l$. 
            By making the change of variable $x = \cosh t$, we can express $f(t)$ as a polynomial $P(x)$ in $x$:
            $$P(x) = \sum_{i=0}^k a_i T_i(x).$$
            Clearly, $P(x)$ is a polynomial of degree at most $k$ and has at most $k$ distinct roots.
            The mapping $x = \cosh(t)$ is a bijection between the interval $t \in (0, T)$ and the interval $x \in (1, \cosh T)$. 
            This implies a one-to-one correspondence between the roots of $f(t)$ in $(0, T)$ and the roots of $P(x)$ in $(1, \cosh T)$.
            Thus, $\{f_{i}^{e}\}_{i\in\mathcal{N}_{N}^{\text{non-odd}}}$ is a CT-system and (H$_{2}$) holds.
    \end{itemize}

    Consider the perturbed Abel equation
    \begin{equation}\label{exam4}
        \frac{dx}{dt}=\left(\sinh t+\varepsilon P_{1}(t)+\varepsilon^{2} P_{2}(t)\right)x^{3}+\left(\varepsilon Q_{1}(t)+\varepsilon^{2} Q_{2}(t)\right)x^{2},
    \end{equation}
    where 
    \begin{equation*}
        P_{i}(t)=a_{i0} + \sum_{j=1}^{n} \left(a_{ij}\cosh (jt) + b_{ij}\sinh (jt)\right),\quad Q_{i}(t)=c_{i0} + \sum_{j=1}^{m} \left(c_{ij}\cosh (jt) + d_{ij}\sinh (jt)\right).
    \end{equation*}
Theorem \ref{thm3} (i) yields
$\mathcal{H}_{\mathbb{H}}(n,m)\geq 2m+1$.
\end{proof}
Note that in the proof of the above result, we only verify hypotheses (H$_{1}$) and (H$_{2}$). 
In fact, the hyperbolic family fails (H$_3$), as shown next.

\begin{proposition}
    The set $\mathcal{F}=\{f_{0},f_{1},f_{2},f_{3},f_{4},\dots\}=\{1, \sinh t, \cosh t-C_{1}, \sinh (2t), \cosh (2t)-C_{2}, \ldots\}$ does not satisfy hypothesis (H$_{3}$).
\end{proposition}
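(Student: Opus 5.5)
Our plan is to exhibit an explicit pair $(k,l)$ for which the membership $\tilde F^e_l\tilde f^o_k\in\mathcal B_{k+l-1}\setminus\mathcal B_{k+l-2}$ in (H$_3$) fails. With the relabelling of Section~\ref{setting} we have $\tilde f^e_i=\cosh(it)-C_i$ and $\tilde f^o_i=\sinh(it)$. Since $\int_{-T}^{t}(\cosh(is)-C_i)\,ds$ is odd, it equals $\frac{1}{i}\sinh(it)-C_i t$, so
\[
\tilde F^e_i=\tfrac1i\sinh(it)-C_it,\qquad
\mathcal B_r=\operatorname{span}\Bigl(\bigl\{(\tfrac1i\sinh(it)-C_it)\sinh t\bigr\}_{1\le i\le r}\Bigr).
\]
In the trigonometric case, the products $\sin lt\,\sin kt$ collapse into the span of $\{\sin(jt)\sin t\}$ because of the Chebyshev identities. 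Here, by contrast, $\tilde F^e_i$ carries the non-hyperbolic term $-C_it$, with $C_i=\sinh(iT)/(iT)>0$. This term should obstruct the membership.

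We take the simplest case $k=2$, $l=1$, and aim to show that $\tilde F^e_1\tilde f^o_2\notin\mathcal B_2$. We have
\[
\tilde F^e_1\tilde f^o_2=\sinh t\,\sinh 2t-C_1t\sinh 2t .
\]
Suppose, for contradiction, that this equals $\lambda_1(\sinh t-C_1t)\sinh t+\lambda_2(\tfrac12\sinh 2t-C_2t)\sinh t$. We will compare components in the family of functions $\{e^{at},\,te^{at}\}_{a\in\mathbb Z}$, which is linearly independent over $\mathbb R$ (a standard fact from exponential polynomials, or from uniqueness of solutions of linear constant-coefficient ODEs). Expanding both sides:
\begin{itemize}
\item The left side contains $-C_1t\sinh 2t$, which has nonzero components on $te^{\pm2t}$.
\item The right side has polynomial-times-exponential terms only of the form $t\sinh t$, that is, on $te^{\pm t}$.
\end{itemize}
So the coefficient of $te^{2t}$ on the right is $0$, while on the left it is $-C_1/2\neq0$. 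This is a contradiction, so (H$_3$) fails for $(k,l)=(2,1)$. It is worth also checking the pure-exponential parts for consistency, but a single mismatched coefficient already suffices.

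The only delicate point is making the linear independence of $\{t^je^{at}\}$ rigorous on the interval $[-T,T]$. This follows from analyticity: an identity on an interval extends to all of $\mathbb R$, where the standard independence of exponential polynomials applies. We must also confirm that $C_1\neq0$ and that the relabelling indeed gives $\tilde f^o_2=\sinh 2t$. Both are immediate from $\mathcal F$ as defined.

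As an alternative obstruction, one could show that already $k=l=1$ fails. There, $\tilde F^e_1\tilde f^o_1$ lies trivially in $\mathcal B_1\setminus\mathcal B_0$, so it offers no counterexample. Hence we choose $k=2$.
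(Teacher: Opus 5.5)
Your proposal is correct and is essentially the paper's proof. The paper also takes $(k,l)=(2,1)$, expands $\tilde F_1^e\tilde f_2^o$, $\tilde F_1^e\tilde f_1^o$ and $\tilde F_2^e\tilde f_1^o$ into exponential polynomials, and uses $C_1\neq0$ together with the independence of $te^{\pm2t}$ from the remaining terms to conclude $\tilde F_1^e\tilde f_2^o\notin\mathcal B_2$. The one blemish is your final paragraph, which announces that $k=l=1$ ``fails'' and then correctly observes that it does not; it is harmless and can simply be deleted.
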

\begin{proof}
We now prove that  
\(\tilde{F}_1^e(t) \tilde{f}_2^o(t)\) does not belong to $\operatorname{span}\left(\left\{\tilde{F}_1^e\tilde{f}_1^o,\tilde{F}_2^e\tilde{f}_1^o\right\}\right)$.
Note that
\begin{equation*}
    \begin{split}
        &\tilde{F}_1^e(t) \tilde{f}_2^o(t) = \left(\sinh t - C_1 t\right) \sinh(2t)=\frac{e^{-3t}}{4}-\frac{e^{-t}}{4}-\frac{e^{t}}{4}+\frac{e^{3t}}{4}+\frac{C_{1}}{2}t e^{-2t}-\frac{C_{1}}{2}t e^{2t},\\
        &\tilde{F}_1^e(t) \tilde{f}_1^o(t) = \left(\sinh t - C_1 t\right)\sinh t =-\frac12+\frac{e^{-2t}}{4}+\frac{e^{2t}}{4}+\frac{C_{1}}{2}t e^{-t}-\frac{C_{1}}{2}t e^{t},\\
        &\tilde{F}_2^e(t) \tilde{f}_1^o(t) = \left(\frac{\sinh(2t)}{2} - C_2 t\right)\sinh t = \frac{e^{-3t}}{8}-\frac{e^{-t}}{8}-\frac{e^{t}}{8}+\frac{e^{3t}}{8}+\frac{C_{2}}{2}t e^{-t}-\frac{C_{2}}{2}t e^{t}.
    \end{split}
\end{equation*}
Since $C_1\neq0$ and
$
te^{-2t},\,te^{2t}
\notin
\operatorname{span}
\left\{
e^{-3t},e^{-2t},e^{-t},
e^{t},e^{2t},e^{3t},
te^{-t},te^{t}
\right\},$
it follows that
$$
\tilde{F}_1^e(t)\tilde{f}_2^o(t)
\notin
\operatorname{span}
\left(
\left\{
\tilde{F}_1^e\tilde{f}_1^o,
\tilde{F}_2^e\tilde{f}_1^o
\right\}
\right).$$
Therefore, the hyperbolic family $\mathcal F$ fails to satisfy hypothesis (H$_3$). This completes the proof.
\end{proof}

\section{Conclusions and future work}

In this paper, we have studied the maximum number of limit cycles of generalized Abel equations with coefficients belonging to finite-dimensional spaces generated by Chebyshev families. 
We developed a unified framework based on first- and second-order Melnikov analyses together with the zero-counting properties of Chebyshev systems.
More precisely, by introducing suitable hypotheses on the symmetry and algebraic structure of the coefficient spaces, we obtained explicit estimates for the number of zeros of the first- and second-order Melnikov functions. These estimates lead to lower bounds for the corresponding Hilbert numbers of generalized Abel equations. Our results extend previous studies restricted to particular coefficient families, such as polynomial and trigonometric polynomial coefficients, to a broader class of functions satisfying suitable Chebyshev properties.

The examples presented in Section \ref{Applications} demonstrate the applicability of our framework to several representative coefficient families. In particular, we recover known results for trigonometric polynomial coefficients, improve previous lower bounds in the polynomial setting, and illustrate both the applicability and the limitations of our method for hyperbolic coefficient functions. These examples show that the Chebyshev approach provides a flexible framework for studying Abel equations beyond classical polynomial and trigonometric cases.

As the final part of this paper, we briefly discuss several possible directions for future research.

\begin{itemize}
    \item[(1)] \textbf{Higher-order Melnikov analysis.}
    The present work develops the Chebyshev framework up to the second-order Melnikov function. 
    Extending this approach to higher-order Melnikov functions is a natural but challenging direction. 
    Such an extension requires a deeper understanding of the algebraic structure of the function spaces generated by higher-order products of coefficient functions and their primitives.
    A concrete first step would be to study the third-order Melnikov function $M_{3}$ for the perturbed Abel equation.
\begin{equation*}
\frac{dx}{dt}
=
\left(\sin t+\varepsilon P_1(t)+\varepsilon^2P_2(t)+\varepsilon^3P_3(t)\right)x^3
+
\left(\varepsilon Q_1(t)+\varepsilon^2Q_2(t)+\varepsilon^3Q_3(t)\right)x^2,    
\end{equation*}
where $P_i(t)$ and $Q_i(t)$ are trigonometric polynomials of degrees $n$ and $m$, respectively, for $i=1,2,3$.
    \item[(2)] \textbf{Weaker hypotheses on coefficient spaces.}
    The hypotheses imposed in this paper, especially (H$_3$), provide sufficient conditions to guarantee the Chebyshev structure required for the second-order analysis. 
    However, these assumptions may not be necessary. 
    It would be interesting to develop weaker conditions or alternative approaches that apply to coefficient families not satisfying (H$_3$), such as the hyperbolic family discussed in Section \ref{Applications}.

    \item[(3)] \textbf{Sharper estimates and exact Hilbert numbers.}
    The results obtained in this paper provide lower bounds for the maximum number of limit cycles. 
    A challenging problem is to determine whether these bounds are optimal and to obtain exact values of the corresponding Hilbert numbers of generalized Abel equations.
\end{itemize}
\section*{Acknowledgements}
We thank the anonymous referee for their valuable comments and suggestions,
which helped us improve both the mathematical content and the presentation
of this paper.

The first author is supported by NNSF of China (No. 12271212).
The second and third authors are supported by the NNSF of China (No. 12371183 and No. 124B2006).

\section*{Author Contributions}
All authors reviewed the manuscript.

\section*{Availability of data and materials}
No datasets were generated or analysed during the current study.

\section*{Conflict of Interests}
The authors declare that they have no conflict of interests regarding the publication of this paper.


\begin{thebibliography}{99}

\bibitem{ABS} {\sc A. \'{A}lvarez, J. L. Bravo and F. Sánchez.} 
{\it\ Planar systems and Abel equations.} Commun. Pure Appl. Anal. 
{\bf 21} (2022), 3463--3478.

\bibitem{AGY} {\sc M. J. \'{A}lvarez, A. Gasull and J. Yu.} 
{\it\ Lower bounds for the number of limit cycles of trigonometric Abel equations.} 
J. Math. Anal. Appl. {\bf 342} (2008), 682--693.

\bibitem{Alvarez2007} {\sc M. J. \'{A}lvarez, A. Gasull and H. Giacomini.} 
{\it\ A new uniqueness criterion for the number of periodic orbits of Abel equations.} 
J. Differ. Equ. {\bf 234} (2007), 161--176.

\bibitem{Arnold1} {\sc V. I. Arnol'd.} 
{\it\ Loss of stability of self-induced oscillations near resonance, and versal deformations of equivariant vector fields.} 
Funct. Anal. Appl. {\bf 11} (1977), 85--92.

\bibitem{Arnold2} {\sc V. I. Arnol'd.} 
{\it\ Ten problems.} In \textit{Theory of singularities and its applications}, 
volume 1 of \textit{Adv. Soviet Math.}, pages 1--8. 
Amer. Math. Soc., Providence, RI, 1990.

\bibitem{Barone} {\sc A. Barone and G. Paternò.} 
{\it\ Physics and Applications of the Josephson Effect},
John Wiley \& Sons, New York, 1982.

\bibitem{bravo1} {\sc J. L. Bravo, M. Fern\'{a}ndez and A. Gasull.} 
{\it\ Stability of singular limit cycles for Abel equations.} 
Discrete Contin. Dyn. Syst. {\bf 35} (2015), 1873--1890.

\bibitem{bravo2} {\sc J. L. Bravo, M. Fern\'{a}ndez and I. Ojeda.} 
{\it\ Stability of singular limit cycles for Abel equations revisited.} 
J. Differential Equations {\bf 379} (2024), 1--25.

\bibitem{Cherkas} {\sc L. A. Cherkas.} 
{\it\ Conditions for a center for a certain Liénard equation}. 
Differ. Uravn., {\bf 12} (1976), 292--298.

\bibitem{appliction1} {\sc E. Fossas, J. M. Olm and H. Sira-Ram\'{\i}rez.} 
{\it\ Iterative approximation of limit cycles for a class of Abel equations.} 
Phys. D {\bf 237} (2008), 3159--3164.

\bibitem{GG} {\sc A. Gasull and A. Guillamon.} 
{\it\ Limit cycles for generalized Abel equations.} 
Int. J. Bifurc. Chaos {\bf 16} (2006), 3737--3745.

\bibitem{GGM} {\sc A. Gasull, A. Geyer and F. Mañosas.} 
{\it\ A Chebyshev criterion with applications.} 
J. Differential Equations {\bf 269} (2020), 6641--6655.

\bibitem{GL} {\sc A. Gasull and J. Llibre.} 
{\it\ Limit cycles for a class of Abel equations.} 
SIAM J. Math. Anal. {\bf 21} (1990), 1235--1244.

\bibitem{openproblem} {\sc A. Gasull.} 
{\it\ Some open problems in low dimensional dynamical systems.} 
SeMA J. {\bf 78} (2021), 233--269.

\bibitem{GuckenheimerHolmes} {\sc J. Guckenheimer and P. Holmes.} 
{\it\ Nonlinear Oscillations, Dynamical Systems, and Bifurcations of Vector Fields.} 
Applied Mathematical Sciences, vol. 42, Springer-Verlag, New York, 1983.

\bibitem{Harko2014} {\sc T. Harko, F. S. N. Lobo and M. K. Mak.} 
{\it\ A class of exact solutions of the Li\'{e}nard-type ordinary nonlinear differential equation.} 
J. Eng. Math. {\bf 89} (2014), 193--205.

\bibitem{appliction2} {\sc T. Harko and M. K. Mak.} 
{\it\ Relativistic dissipative cosmological models and Abel differential equation.} 
Comput. Math. Appl. {\bf 46} (2003), 849--853.

\bibitem{appliction3} {\sc T. Harko and M. K. Mak.} 
{\it\ Travelling wave solutions of the reaction-diffusion mathematical model of glioblastoma growth: An Abel equation based approach.} 
Math. Biosci. Eng. {\bf 12} (2015), 41--69.

\bibitem{HL} {\sc J. Huang and H. Liang.} 
{\it\ Estimate for the number of limit cycles of Abel equation via a geometric criterion on three curves.} 
Nonlinear Differ. Equ. Appl. {\bf 24} (2017), 47.

\bibitem{HTV} {\sc J. Huang, J. Torregrosa and J. Villadelprat.} 
{\it\ On the number of limit cycles in generalized Abel equations.} 
SIAM J. Appl. Dyn. Syst. {\bf 19} (2020), 2343--2370.

\bibitem{HTZ} {\sc J. Huang, R. Tian and Y. Zhao.} 
{\it\ A Chebyshev criterion for at most two non-zero limit cycles in Abel equations.} 
Nonlinearity {\bf 39} (2026), 015030.

\bibitem{Ily} {\sc Y. Ilyashenko.} 
{\it\ Centennial history of Hilbert's 16th problem.} 
Bull. Amer. Math. Soc. (N.S.) {\bf 39} (2002), 301--354.

\bibitem{KS}
{\sc S. Karlin and W. J. Studden.}
{\it Tchebycheff Systems: With Applications in Analysis and Statistics.}
Pure and Applied Mathematics, Vol. XV, Interscience Publishers,
New York--London--Sydney, 1966.

\bibitem{Kru} {\sc M. Krusemeyer.} 
{\it\ Why does the Wronskian work?} 
Am. Math. Mon. {\bf 95} (1988), 46--49.

\bibitem{Li} {\sc J. Li.} 
{\it\ Hilbert's 16th problem and bifurcations of planar polynomial vector fields.} 
Internat. J. Bifur. Chaos Appl. Sci. Engrg. {\bf 13} (2003), 47--106.

\bibitem{Lins} {\sc A. Lins-Neto.} 
{\it\ On the number of solutions of the equation $dx/dt=\sum_{j=0}^{n}a_j(t)x^j$, $0\leq t\leq1$ for which $x(0)=x(1)$.} 
Invent. Math. {\bf 59} (1980), 67--76.

\bibitem{Lloyd2} {\sc N. G. Lloyd.} 
{\it\ On a class of differential equations of Riccati type.} 
J. London Math. Soc. {\bf 10} (1975), 1--10.

\bibitem{Lloyd} {\sc N. G. Lloyd.} 
{\it\ A note on the number of limit cycles in certain two-dimensional systems.} 
J. London Math. Soc. {\bf 20} (1979), 277--286.

\bibitem{Maz} {\sc M.-L. Mazure.} 
{\it\ Chebyshev spaces and Bernstein bases.} 
Constr. Approx. {\bf 22} (2005), 347--363.

\bibitem{Mhadhbi2024} {\sc N. Mhadhbi, S. Gana and M. F. Alsaeedi.} 
{\it\ Exact solutions for nonlinear partial differential equations via a fusion of classical methods and innovative approaches.} 
Sci. Rep. {\bf 14} (2024), 6443.

\bibitem{Rou} {\sc R. Roussarie.} 
{\it\ Bifurcations of Planar Vector Fields and Hilbert's Sixteenth Problem.} 
Progr. Math. 164, Birkh\"auser Verlag, Basel, 1998.

\bibitem{Smale0} {\sc S. Smale.} 
{\it\ Dynamics retrospective: great problems, attempts that failed.} 
Physica D {\bf 51} (1991), 267--273.

\bibitem{Smale1} {\sc S. Smale.} 
{\it\ Mathematical problems for the next century.} 
Math. Intell. {\bf 20} (1998), 7--15.

\bibitem{Strogatz} {\sc S. H. Strogatz.} 
{\it\ Nonlinear Dynamics and Chaos: With Applications to Physics, Biology, Chemistry, and Engineering, 2nd ed.} 
Boca Raton, FL, USA: CRC Press, 2015.

\bibitem{appliction4} {\sc A. V. Yurov, A. V. Yaparova and V. A. Yurov.} 
{\it\ Application of the Abel equation of the 1st kind to inflation analysis of non-exactly solvable cosmological models.} 
Gravit. Cosmol. {\bf 20} (2014), 106--115.

\bibitem{YHL} {\sc X. Yu, J. Huang and C. Liu.} 
{\it\ Maximum number of limit cycles for Abel equation having coefficients with linear trigonometric functions.} 
J. Differ. Equ. {\bf 410} (2024), 301--318.
    \end{thebibliography}
\end{document}